\theoremstyle{plain}
\newtheorem{theorem}[subsection]{{\bf Theorem}}
\newtheorem{corollary}[subsection]{{\bf Corollary}}
\newtheorem{proposition}[subsection]{{\bf Proposition}}
\newtheorem{lemma}[subsection]{{\bf Lemma}}
\theoremstyle{definition}
\theoremstyle{remark}
\numberwithin{equation}{subsection}
\DeclareMathOperator{\Aut}{Aut}
\DeclareMathOperator{\Out}{Out}
\DeclareMathOperator{\Inn}{Inn}
\DeclareBoldMathCommand{\bbot}{\bot}
\DeclareSymbolFont{cyrletters}{OT2}{wncyr}{m}{n}
\DeclareMathSymbol{\Sha}{\mathalpha}{cyrletters}{"58}
\DeclareMathOperator{\ad}{ad}
\DeclareMathOperator{\QHom}{QHom}
\DeclareMathOperator{\MQHom}{MQHom}
\DeclareMathOperator{\Poly}{Poly}
\DeclareMathOperator{\QPoly}{QPoly}
\DeclareMathOperator{\Conj}{Conj}
\DeclareMathOperator{\Comm}{Comm}
\DeclareMathOperator{\LComm}{LComm}
\DeclareMathOperator{\RComm}{RComm}
\DeclareMathOperator{\Pol2}{Pol_2}
\DeclareMathOperator{\CAT}{CAT}
\DeclareMathOperator{\grph}{\mathfrak{gr}}
\begin{document}
\title[Middle quasi-homomorphisms]{Quasi-affine and quasi-quadratic maps of groups with non-abelian targets}
\author{Primo\v z Moravec}
\address{{
Faculty of  Mathematics and Physics, University of Ljubljana,
and Institute of Mathematics, Physics and Mechanics,
Slovenia}}
\email{primoz.moravec@fmf.uni-lj.si}
\subjclass[2020]{}
\keywords{}
\thanks{ORCID: \url{https://orcid.org/0000-0001-8594-0699}. The author acknowledges the financial support from the Slovenian Research and Inovation Agency (ARIS), research core funding No. P1-0222, and project No. J1-50001.}
\date{\today}
\begin{abstract}
\noindent
It is shown that the middle quasi-homomorphisms of Fujiwara and Kapovich are precisely constant perturbations of quasi-homomorphisms. Quasi-polynomial maps are defined and their constructibility is explored. In particular, it is shown that a large class of quasi-quadratic maps into torsion-free hyperbolic groups is rigid with respect to bounded perturbations.
\end{abstract}
\maketitle

\section{Introduction}
\label{s:intro}

\noindent
Let $G$ and $H$ be groups. Let $d$ be a left-invariant metric on $H$. A map $\phi:G\to H$ is a {\it quasi-homomorphism} if
$$\sup_{x,y\in G} d(\phi(xy),\phi(x)\phi(y))<\infty.$$
The concept of quasi-homomorphisms goes back to Ulam \cite{Ula60}. Quasi-homomorphisms with commutative target groups $H$ (also known as {\it quasi-morphisms}) have been well studied, especially in connection with bounded cohomology and stable commutator length. An important line of research addresses rigidity of quasi-morphisms with respect to their distance to homomorphisms. 

We assume throughout that $H$ is discrete with left-invariant metric $d$, and not necessarily commutative. Given a map $\phi: G\to H$, define its {\it (left) defect set} by
$$D(\phi)=\{ \phi(y)^{-1}\phi(x)^{-1}\phi (xy)\mid x,y\in G\}.$$
Then it is clear that $\phi$ is a quasi-homomorphism if and only the set $D(\phi)$ is bounded, that is, finite with order bounded by an unspecified constant. Fujiwara and Kapovich showed in their groundbreaking paper \cite{FK16} that all quasi-homomorphisms with discrete targets are, in a sense, constructible in a standard way. It is also exhibited that quasi-homomorphisms into torsion-free hyperbolic groups are rigid with respect to bounded perturbations.

Following Ozawa's suggestion, Fujiwara and Kapovich \cite{FK16} defined the concept of {\it middle quasi-homomorphism} $\phi: G\to H$ by requiring the {\it middle defect set} 
$$M(\phi)=\{ \phi(x)^{-1}\phi(xy)\phi(y)^{-1}\mid x,y\in G\}$$ 
of $\phi$ to be bounded. Note that if the target group $H$ is abelian, then this notion coincides with that of quasi-homomorphisms. Middle quasi-homomorphisms are special instances of the so-called algebraic quasi-homomorphisms and geometric quasi-homomorphisms defined in {\it loc. cit}. Both these classes of maps are closed for bounded perturbations, whereas bounded perturbations of (middle) quasi-homomorphisms do not necessarily yield (middle) quasi-homomorphisms again. 

The exact relationship between quasi-homomorphisms and middle quasi-homomorphisms was left open in \cite{FK16}.
Our first result characterizes middle quasi-homomorphisms in terms of quasi-homomorphisms:

\begin{theorem}
    \label{thm:qhom}
    Middle quasi-homomorphisms $G\to H$ are precisely constant perturbations of quasi-homomorphisms. In particular, every unital middle quasi-homomorphism is a quasi-homomorphism.
\end{theorem}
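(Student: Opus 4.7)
The plan is to reduce both directions of the equivalence to the ``in particular'' clause, namely that every unital middle quasi-homomorphism is a quasi-homomorphism. Once that is settled, a single bookkeeping step handles the ``precisely'' statement.

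\emph{Reduction to the unital case.} Given a middle quasi-hom $\phi$, set $c=\phi(1)$ and $\psi(x)=\phi(x)c^{-1}$. A direct computation gives
\[
\psi(x)^{-1}\psi(xy)\psi(y)^{-1}=c\,\phi(x)^{-1}\phi(xy)\phi(y)^{-1},
\]
so $M(\psi)=c\cdot M(\phi)$ is still bounded (using left-invariance of $d$). Since $\psi(1)=1$, $\psi$ is a unital middle quasi-hom; granting the ``in particular'' claim, $\psi$ is a quasi-hom, and $\phi=\psi\cdot c$ exhibits $\phi$ as a constant perturbation of the quasi-hom $\psi$. Read in reverse, the same identity shows that if $\psi$ is both a quasi-hom and middle, as the construction guarantees, then $M(\phi)=c^{-1}M(\psi)$ is bounded.

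\emph{The unital case.} Assume $\phi$ unital and write $m(x,y)=\phi(x)^{-1}\phi(xy)\phi(y)^{-1}\in M(\phi)$, so that $\phi(xy)=\phi(x)\,m(x,y)\,\phi(y)$ and $m(x,1)=m(1,y)=1$. The left defect rewrites as
\[
\phi(y)^{-1}\phi(x)^{-1}\phi(xy) = \phi(y)^{-1}\,m(x,y)\,\phi(y),
\]
so the task is to bound these conjugates uniformly in $x,y$. Expanding $\phi(xyz)$ via both bracketings and cancelling $\phi(x)$ on the left and $\phi(z)$ on the right yields the cocycle-type identity
\[
m(x,y)\,\phi(y)\,m(xy,z)=m(x,yz)\,\phi(y)\,m(y,z). \qquad(\ast)
\]
Setting $z=y^{-1}$ in $(\ast)$ and using $m(x,1)=1$ collapses it to
\[
m(x,y)\,\phi(y)\,m(xy,y^{-1})=\phi(y)\,m(y,y^{-1}),
\]
which rearranges to
\[
\phi(y)^{-1}\,m(x,y)\,\phi(y)=m(y,y^{-1})\,m(xy,y^{-1})^{-1}.
\]
The right-hand side lies in the bounded set $M(\phi)\cdot M(\phi)^{-1}$, so $D(\phi)$ is bounded and $\phi$ is a quasi-hom.

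\emph{Main obstacle.} The substantive step is converting a middle-defect bound into a left-defect bound, that is, producing the conjugate $\phi(y)^{-1}m(x,y)\phi(y)$ directly from $(\ast)$. The naive substitutions $x=1$ or $z=1$ yield only tautologies, and following the left-to-right reading of $(\ast)$ produces conjugates of $m$'s by $\phi(y)$ on the \emph{wrong} side. The choice $z=y^{-1}$ works precisely because unitality forces $m(x,1)=1$, annihilating the $m(x,yz)$-term, whereupon $\phi(y)$ cancels on its own and the desired conjugate emerges with a manifestly bounded right-hand side. Unitality is therefore not cosmetic but the very mechanism that closes the computation.
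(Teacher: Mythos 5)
Your proof of the unital case is correct, and the route is a little slicker than the paper's: the paper obtains it from Proposition~\ref{prop:x-1xy}, which first estimates $\phi(x^{-1})$ and then $\phi(xy)$, whereas you extract the bound $D(\phi)\subseteq M(\phi)M(\phi)^{-1}$ directly from the cocycle identity $(\ast)$ with $z=y^{-1}$. The translation identity $M(\psi)=cM(\phi)$ is also correct, and it gives you direction (a) of the equivalence: every middle quasi-homomorphism is a constant perturbation of a quasi-homomorphism.

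The gap is in direction (b): that every constant perturbation of a quasi-homomorphism is a middle quasi-homomorphism. The sentence ``Read in reverse, the same identity shows that if $\psi$ is both a quasi-hom and middle, as the construction guarantees, then $M(\phi)=c^{-1}M(\psi)$ is bounded'' does not prove (b). For (b) you must start from an \emph{arbitrary} quasi-homomorphism $\psi$ and an \emph{arbitrary} $c\in H$, set $\phi=\psi_c$, and show $M(\phi)=c^{-1}M(\psi)$ is bounded. That requires $M(\psi)$ to be bounded, i.e.\ that the quasi-homomorphism $\psi$ is itself a middle quasi-homomorphism --- and ``the construction'' only produces $\psi$ as a middle quasi-homomorphism when you started from a middle quasi-homomorphism $\phi$ to begin with. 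As written, (b) is circular.

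The missing step --- every quasi-homomorphism is a middle quasi-homomorphism --- is a genuine assertion and must be argued. In the paper it is Proposition~\ref{prop:qcmqhom} (via Corollary~\ref{cor:qhommqhom}), which rests on Lemma~\ref{lem:boundconj}, i.e.\ on the structural fact from Lemma~\ref{lem:delta} that $\phi(G)$ lies in a bounded neighborhood of $C_H(\Delta_\phi)$, so that conjugating a defect element by $\phi(y)^{\pm1}$ stays bounded. Alternatively, you can close it in the same algebraic style as your unital argument: with $d(x,y)=\phi(y)^{-1}\phi(x)^{-1}\phi(xy)$ and $D=D(\phi)$, expanding $\phi(xyz)$ both ways gives
\[
d(x,y)^{\phi(z)}=d(y,z)\,d(x,yz)\,d(xy,z)^{-1}\in DDD^{-1},
\]
so conjugation of $D$ by any $\phi(z)$ is bounded; since $m(x,y)=\phi(y)d(x,y)\phi(y)^{-1}=d(x,y)^{\phi(y)^{-1}}$ and $\phi(y)^{-1}\sim_{D^2}\phi(y^{-1})$, one gets $m(x,y)\in\bigl(DDD^{-1}\bigr)^{D^2}$, hence $M(\phi)$ bounded. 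Either way, this direction is a theorem in its own right and cannot be ``read off'' from the perturbation identity for $M$.
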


In view of Theorem \ref{thm:qhom} one can therefore think of the middle quasi-homomorphisms as {\it affine quasi-homomorphisms}. In particular, every middle quasi-homomorphism is only a constant away from being a quasi-homomorphism.

In \cite[Theorem 9.6]{FK16}, a unital map $F_2\to F_2$ is constructed that is claimed to be a middle quasi-homomorphism, which is not a quasi-homomorphism. Theorem \ref{thm:qhom} contradicts this. We elaborate on this in Section \ref{s:middle}.

As already mentioned above, bounded perturbations of a quasi-homomorphism, even by a given constant, do not necessarily yield quasi-homomorphisms. In Section \ref{s:perturbing}, we consider the question which constant perturbations of a given quasi-homomorphism $\phi:G\to H$ is again a quasi-homomorphism. The set $\Pi_\phi$ of all such constants is a subgroup of $H$. We characterize this group in the case when $H$ is torsion-free hyperbolic (Proposition \ref{prop:hyperbolic}). Our main result in this direction is a description of $\Pi_\phi$ in terms of the commensurator of the graph $\grph(\phi)$ of $\phi$ within $G\times H$. For symmetric quasi-homomorphisms, it reads as follows:

\begin{theorem}
    \label{thm:Piagain}
    If $\phi:G\to H$ is a symmetric quasi-homomorphism, then ${\rm proj}_{H}(\Comm_{G\times H}\grph(\phi)\cap 1\times H)=\Pi_\phi$.
\end{theorem}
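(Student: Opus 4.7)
The plan is to translate both the commensurator intersection $\Comm_{G\times H}\grph(\phi)\cap 1\times H$ and the set $\Pi_\phi$ into the same boundedness condition on a single $\phi(G)$-conjugation orbit in $H$. Both inclusions of the claimed equality then drop out, with the symmetry hypothesis ensuring that the one-sided perturbations underlying $\Pi_\phi$ are well defined and that $\grph(\phi)$ is stable under inversion up to bounded error.

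First, I would unpack the commensurator condition. Using the ambient notion of commensurability of subsets of $G\times H$ (agreement up to bounded Hausdorff distance), a direct computation of
\[
(1,h_0)\grph(\phi)(1,h_0)^{-1}=\{(x,h_0\phi(x)h_0^{-1}):x\in G\}
\]
and comparison with $\grph(\phi)$ at matching $G$-coordinates show that $(1,h_0)$ commensurates $\grph(\phi)$ iff the set $\{\phi(x)^{-1}h_0\phi(x)h_0^{-1}:x\in G\}=\{[\phi(x)^{-1},h_0]:x\in G\}$ is bounded in $H$. Right-multiplying by $h_0$, which changes distances by at most a bounded additive constant, shows this is equivalent to boundedness of the $\phi(G)$-conjugation orbit $\mathcal{O}(h_0):=\{\phi(x)^{-1}h_0\phi(x):x\in G\}$.

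Next, I would unpack $h_0\in\Pi_\phi$ by computing the defect of the one-sided perturbation $\phi_{h_0}(x):=\phi(x)h_0$. A direct rearrangement yields
\[
\phi_{h_0}(y)^{-1}\phi_{h_0}(x)^{-1}\phi_{h_0}(xy)=h_0^{-1}\bigl(\phi(y)^{-1}h_0^{-1}\phi(y)\bigr)\,\delta(x,y)\,h_0,
\]
where $\delta(x,y)=\phi(y)^{-1}\phi(x)^{-1}\phi(xy)\in D(\phi)$. Since $D(\phi)$ is bounded and conjugation and translation by the fixed element $h_0$ preserve boundedness, the defect of $\phi_{h_0}$ is bounded iff $\mathcal{O}(h_0^{-1})=\{\phi(y)^{-1}h_0^{-1}\phi(y):y\in G\}$ is bounded. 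Because $\mathcal{O}(h_0^{-1})=\mathcal{O}(h_0)^{-1}$ and inversion preserves word length in the left-invariant metric, this is in turn equivalent to boundedness of $\mathcal{O}(h_0)$ itself.

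Both conditions thus reduce to boundedness of $\mathcal{O}(h_0)$, so the two inclusions follow. The main technical obstacle is to verify carefully that every auxiliary operation performed in $H$ (left and right translation, conjugation, inversion) preserves boundedness with respect to the left-invariant metric; this reduces to the standard facts that conjugation by a fixed element changes word length by at most a bounded additive constant, and that inversion preserves word length. A secondary point is the role of symmetry, which enters to guarantee that the perturbation $\phi\cdot h_0$ may be used interchangeably with $h_0\cdot\phi$ in the definition of $\Pi_\phi$, and that $\grph(\phi)$ is closed under inversion up to bounded error, so that $\Comm_{G\times H}\grph(\phi)$ has the expected group structure.
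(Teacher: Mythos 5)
Your proof is correct in substance but takes a more direct route than the paper. The paper's argument factors through the quasi\nobreakdash-subgroup machinery: it introduces $\Pi^*_\phi$ (the set of $(a,b)$ for which $\grph(\phi)\cdot(a,b)$ is a left quasi\nobreakdash-subgroup), proves in Proposition \ref{prop:pistar} that $(a,b)\in\Pi^*_\phi$ iff the set $S_{(a,b)}(\phi)=\{\phi(a^{-z})b^{\phi(z)}:z\in G\}$ is bounded, proves in Proposition \ref{prop:transL} that for a symmetric left quasi\nobreakdash-subgroup $\Lambda$, $\Lambda a$ is a left quasi\nobreakdash-subgroup iff $a\in\Comm_\Gamma(\Lambda)$, obtains $\Comm_{G\times H}\grph(\phi)=\Pi^*_\phi$, and finally specializes to $a=1$. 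You instead unpack the commensurator condition for an element $(1,h_0)$ by hand and compare it with the definition of $\Pi_\phi$, observing that both reduce to boundedness of the orbit $\mathcal{O}(h_0)=\{\phi(x)^{-1}h_0\phi(x):x\in G\}$; this is a legitimate shortcut, and it is worth noting that the paper's set $\Pi_\phi$ is already defined as the set of $c$ for which $x\mapsto(c^{-1})^{\phi(x)}$ is bounded, so your $\Pi_\phi$-side computation essentially re-derives Proposition \ref{prop:pertdelta}'s mechanism.

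Three places deserve tightening. First, the paper's $\Comm$ is the combinatorial one ($\LComm\cap\RComm$, with $\LComm$ meaning $gAg^{-1}\subseteq AF$ for some finite $F$), not ``bounded Hausdorff distance''; your comparison ``at matching $G$-coordinates'' amounts to restricting $F$ to $1\times H$, which is a priori stronger than allowing arbitrary finite $F\subseteq G\times H$. The two are in fact equivalent here because $\phi$ is a quasi\nobreakdash-homomorphism (so shifting $x$ by a fixed $a\in\mathrm{proj}_G(F)$ changes $\phi(x)$ only by an element of the bounded set $\phi(a^{-1})D(\phi)$), but you should say this explicitly. Second, your account of where symmetry is used is off: the characterization of $\Pi_\phi$ via boundedness of $\mathcal{O}(h_0)$ holds for any quasi\nobreakdash-homomorphism and does not need $\phi(x^{-1})=\phi(x)^{-1}$. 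Symmetry is needed only on the commensurator side, to identify $\grph(\phi)^{-1}$ with $\grph(\phi)$ and hence collapse the $\RComm$ condition ($(1,h_0)^{-1}\in\LComm(\grph(\phi)^{-1})$) to the same orbit-boundedness statement; you should verify this $\RComm$ direction explicitly rather than appealing to a self-symmetric ``Hausdorff'' notion. Third, the phrase ``inversion preserves word length'' is not the reason your final equivalence holds; in this paper ``bounded'' means finite with a fixed cardinality bound, and what is used is simply that $S\mapsto S^{-1}$, $S\mapsto hSh^{-1}$ and $S\mapsto Sh$ take bounded sets to bounded sets. With these clarifications your argument matches the content of the paper's Propositions \ref{prop:pistar} and \ref{prop:transL} specialized to $(1,h_0)$, and it is a clean, self-contained alternative.
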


The argument goes by first characterizing graphs of middle quasi-homomorphisms as right translates of left quasi-subgroups of $G\times H$, and then looking at perturbations of graphs of quasi-homomorphisms. 

In the second part of the paper we describe how  middle quasi-homomorphisms are closely related to quasi-polynomial maps. The notion of polynomial maps on groups goes back to Leibman \cite{Lei98,Lei02}. An approximate version of these was recently introduced by Jamneshan and Thom \cite{JT24} under the name uniform $\epsilon$-polynomials. In the coarse setting, we define a map $\phi:G\to H$ to be a quasi-polynomial of degree $d$ if the range $P_d(\phi)$ of the $(d+1)$-iterated non-commutative difference map is bounded, see Section \ref{s:qpoly} for a concise definition. With this definition, quasi-polynomials of degree 0 are precisely the bounded maps, and degree 1 quasi-polynomials are precisely middle quasi-homomorphisms.

Our main aim is to develop the notion of constructibility for quasi-polynomials that resembles that of quasi-homomorphisms developed by Fujiwara and Kapovich \cite{FK16}. We restrict ourselves to quasi-polynomials of degree 2, also called the quasi-quadratic maps; the theory extends to higher degree quasi-polynomials, with straightforward but tedious technical adjustments. Compared to degree 1, there are two important differences. First of all, one needs to restrict to quasi-quadratic maps satisfying some finiteness conditions in the spirit of Hrushovski \cite{Hru12}. These maps are called the normal quadratic maps. Their fundamental property is that $\phi(G)\cup\phi (G)^{-1}$ is covered by finitely many cosets of the centralizer of the quadratic defect subgroup, and that the balls generated by the corresponding transversal eventually shrink. All quasi-homomorphisms satisfy these conditions for fairly apparent reasons. The second deviation point is to, rather than considering the map $\phi:G\to H$, induce a normal quasi-quadratic map $\hat{\phi}: \Pol2(G)\to H$ in a canonical way and then prove a constructibility result for it. Here, $\Pol2 G$ is a universal construction introduced and described in \cite{JT24}. 

Our representability result reads as follows:

\begin{theorem}
    \label{thm:normalpoly}
    Every normal quasi-quadratic map $\phi: G\to H$ is constructible in the following sense: 
    \begin{enumerate}
        \item There is an induced normal quasi-quadratic map $\hat{\phi}:\Pol2 G\to H$,
        \item There exist a subgroup $G_o$ of $\Pol2 G$ with $|\Pol2 G:G_o|<\infty$, a normal quasi-quadratic map $\phi_o:G_o\to H_o\le H$ within a finite distance from $\hat{\phi}$ (restricted to $G_o$), and there is a finitely generated central subgroup $Z_o$ of $H_o$, such that the induced map $\tilde{\phi}_o:G_o\to H_o/Z_o$ is quadratic.
    \end{enumerate}
\end{theorem}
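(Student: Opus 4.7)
The plan is to mimic the Fujiwara--Kapovich constructibility argument for quasi-homomorphisms one degree higher, using the universal envelope $\Pol2 G$ of \cite{JT24} as the natural source, and then to extract a finitely generated central subgroup on which the reduced map becomes genuinely quadratic.

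For part (1), I would invoke the universal property of $\Pol2 G$. The key point is that every quasi-quadratic $\phi:G\to H$ comes equipped with canonical first and second non-commutative difference operators whose algebraic patterns are precisely what $\Pol2 G$ parametrises. One lifts $\phi$ to $\hat{\phi}:\Pol2 G\to H$ by prescribing its values on the canonical generators of $\Pol2 G$ in terms of $\phi$ and its difference maps, then verifying compatibility up to a bounded error. To check that $\hat{\phi}$ is again normal quasi-quadratic one shows that (a) the bounded set $P_2(\hat{\phi})$ is generated from $P_2(\phi)$ by the conjugations appearing in the construction, and (b) the normality conditions (the covering of $\hat{\phi}(\Pol2 G)\cup\hat{\phi}(\Pol2 G)^{-1}$ by finitely many cosets of the centraliser of the quadratic defect subgroup, together with the eventual shrinking of the balls generated by the associated transversal) transfer, since $\Pol2 G$ is built out of $G$ without introducing new defects.

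For part (2), let $D$ be the quadratic defect subgroup, i.e.\ the subgroup generated by $P_2(\hat{\phi})$; normality forces $D$ to be essentially finitely generated, and its centraliser $C=C_H(D)$ contains $D$ in its centre. The covering hypothesis provides finitely many cosets $h_1C,\ldots,h_kC$ whose union contains $\hat{\phi}(\Pol2 G)\cup\hat{\phi}(\Pol2 G)^{-1}$. Define $G_o$ to be $\hat{\phi}^{-1}(C)$, intersected if necessary with a finite-index subgroup extracted from the coset combinatorics; as in the Fujiwara--Kapovich proof, this has finite index in $\Pol2 G$, and after a bounded correction $\hat{\phi}|_{G_o}$ can be arranged to take values in a subgroup $H_o\le C$. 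The shrinking-balls hypothesis now allows one to enlarge $D$ to a finitely generated central subgroup $Z_o$ of $H_o$ that is closed under the natural $G_o$-action coming from the third difference. A final bounded perturbation by elements of $Z_o$ yields $\phi_o$ whose second-order differences vanish modulo $Z_o$, so the induced map $\tilde{\phi}_o:G_o\to H_o/Z_o$ is truly quadratic.

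The main obstacle is the construction of $Z_o$ in part (2). For quasi-homomorphisms the analogous subgroup is essentially the defect itself, and finiteness is automatic. For quadratic maps, although $P_2(\hat{\phi})$ is finite, one must take its closure under iterated conjugations and shifts that appear whenever one tries to force the second difference to vanish; without a suitable finiteness hypothesis this closure could fail to be either central or finitely generated. The entire purpose of the \emph{normal} assumption -- and in particular of the eventually shrinking balls condition, in the spirit of \cite{Hru12} -- is to make this closure tractable. The delicate bookkeeping required to choose $G_o$, $H_o$, $Z_o$, and the compatible bounded perturbation $\phi_o$ simultaneously is where the technical heart of the argument lies.
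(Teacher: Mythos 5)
There is a genuine gap in your construction of $G_o$. You propose $G_o=\hat{\phi}^{-1}(C)$ with $C=C_H(D)$, but $\hat{\phi}$ is only a quasi-quadratic map, not a homomorphism, so the preimage of a subgroup has no reason to be a subgroup of $\Pol2 G$, and intersecting it with a finite-index subgroup does not repair that. The step you are missing is the passage to $\Out\Xi_\phi$: since $P_2(\phi)\subseteq\Xi_\phi$ and $\phi(G)$ normalises $\Xi_\phi$, the assignment $g\mapsto \ad(\phi(g))\Inn\Xi_\phi$ is an honestly \emph{quadratic} (not merely quasi-quadratic) unital map $\Phi:G\to\Out\Xi_\phi$, so by the universal property of $\Pol2 G$ (Lemma \ref{lem:pol2}) there is a genuine homomorphism $\kappa:\Pol2 G\to\Out\Xi_\phi$ with $\kappa\tau=\Phi$. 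One then takes $G_o:=\ker\kappa$, which is automatically a subgroup, and the ball-shrinking hypothesis (Q4) is exactly what forces $\kappa$ to have finite image, giving $[\Pol2 G:G_o]<\infty$. Without this reduction to $\Out\Xi_\phi$ your plan has no mechanism for producing a bona fide finite-index subgroup.

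Two further points. Part (1) is much simpler than you describe: one just sets $\hat{\phi}=\phi\circ\pi$, where $\pi:\Pol2 G\to G$ is the canonical surjection. Then $P_2(\hat{\phi})=P_2(\phi)$ on the nose and the data $(K,\Xi_\phi,Y)$ for (Q1)--(Q4) transfer verbatim; there is no ``bounded-error compatibility'' to verify. Prescribing values on the generators $\tau(g)$ and extending, as you suggest, would produce a homomorphism, which is not $\hat{\phi}$. Finally, you misattribute the role of (Q4): it is not needed to make $Z_o$ finitely generated or central. Once one corrects $\hat{\phi}|_{G_o}$ by multiplying on the right by the finitely many transversal elements $z_i\in\Xi_\phi$, the resulting $\phi_o$ maps $G_o$ into $C_H(\Xi_\phi)$; then $Z_o:=\Xi_{\phi_o}\subseteq\Xi_\phi$ is finitely generated because $P_2(\phi_o)$ is finite, and it is central in $H_o=\langle\phi_o(G_o)\rangle$ because $\phi_o(G_o)$ centralises $\Xi_\phi\supseteq\Xi_{\phi_o}$. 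The sole use of (Q4) is the finiteness of $\im\kappa$.
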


This has several consequences, similar as those for the situation of quasi-homomorphisms described in \cite{FK16}. We only touch upon the case where the target group $H$ is torsion-free hyperbolic (Proposition \ref{prop:normalhyperb}). The other cases of \cite{FK16}, such as $\CAT(0)$ targets, targets that are mapping class groups or groups acting on trees, could be dealt with in a similar way.

As shown by Fujiwara and Kapovich \cite[Theorem 4.4]{FK16}, the unbounded quasi-homomorphisms are rigid with respect to bounded perturbations. We show the same for unbounded normal quasi-quadratic maps (in fact, quasi-quadratic maps satisfying a slightly less restrictive condition):

\begin{theorem}
    \label{thm:distancenearly}
    Let $H$ be a torsion-free hyperbolic group. Let $\phi_1$ and $\phi_2$ be nearly normal quasi-quadratic maps from $G$ to $H$. Suppose that $d(\phi_1,\phi_2)<\infty$. Then either both $\phi_1$, $\phi_2$ are bounded, or they both map into the same cyclic subgroup of $H$, or $\phi_1=\phi_2$. 
\end{theorem}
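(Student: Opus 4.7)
The plan is to mimic the argument of Fujiwara--Kapovich \cite[Theorem 4.4]{FK16} for quasi-homomorphisms, using the constructibility result Theorem \ref{thm:normalpoly} as a lever to reduce the degree-$2$ situation to a genuinely quadratic one modulo a cyclic subgroup, after which the rigidity of torsion-free hyperbolic groups can be exploited directly.

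First I would set $f(x)=\phi_1(x)^{-1}\phi_2(x)$, which takes values in a bounded subset $B\subseteq H$ by the hypothesis $d(\phi_1,\phi_2)<\infty$. Expanding the triple iterated non-commutative differences for $\phi_1$ and $\phi_2$ and comparing them term by term, one obtains that certain conjugates $\phi_1(x)\,b\,\phi_1(x)^{-1}$, with $b\in B$, also remain bounded as $x$ ranges over $G$. In a torsion-free hyperbolic group, a bounded subset of a non-trivial conjugacy class is necessarily finite, and the centralizer of any non-trivial element is infinite cyclic. These two structural facts are the main lever that turns bounded deviation into the claimed rigidity.

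Next I would apply Theorem \ref{thm:normalpoly} to each $\phi_i$ to obtain induced maps $\hat\phi_i:\Pol2(G)\to H$, finite-index subgroups $G_o^{(i)}\le\Pol2(G)$, subgroups $H_o^{(i)}\le H$, and finitely generated central subgroups $Z_o^{(i)}\trianglelefteq H_o^{(i)}$, so that modulo $Z_o^{(i)}$ the restriction of $\hat\phi_i$ to $G_o^{(i)}$ is genuinely quadratic. Passing to the intersection $G_o=G_o^{(1)}\cap G_o^{(2)}$ and to a common central subgroup $Z_o$, the ratio $\tilde\phi_1\tilde\phi_2^{-1}$ in $H_o/Z_o$ is a bounded genuine-quadratic map, and the only bounded genuine-quadratic maps into a group with cyclic centralizers are constant. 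Hence $\phi_1$ and $\phi_2$ agree modulo $Z_o$ on $G_o$, up to a uniformly bounded correction.

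The trichotomy is then extracted as follows. Since $Z_o$ is a finitely generated abelian subgroup of the torsion-free hyperbolic group $H$, it is either trivial or infinite cyclic. In the trivial case, $\phi_1$ and $\phi_2$ coincide on a finite-index subgroup of $\Pol2(G)$ up to boundedness; combining this with the conjugation-boundedness from the first step upgrades the agreement to $\phi_1=\phi_2$ everywhere on $G$, unless both maps were bounded to start with. In the infinite-cyclic case, both $\phi_i(G)$ lie in a uniformly bounded neighbourhood of a common cyclic subgroup of $H$, and the infinite-cyclic-centralizer rigidity forces $\phi_1(G)\cup\phi_2(G)$ itself into a common cyclic subgroup. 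The hard part will be the bookkeeping around $Z_o$ and the descent from $\Pol2(G)$ back to $G$: one must check that the rigidity established on a finite-index subgroup of $\Pol2(G)$ propagates to the original maps on $G$ without losing the dichotomy, and this is precisely where the \emph{nearly normal} hypothesis, slightly weaker than \emph{normal}, has to earn its keep.
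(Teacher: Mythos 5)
Your proposal diverges from the paper's argument in a way that leaves genuine gaps. The most serious one is the assertion that the pointwise quotient $\tilde\phi_1\tilde\phi_2^{-1}$ is a quadratic map: for non-abelian targets, quadratic maps in the Leibman sense are \emph{not} closed under pointwise products or inverses, so this step has no justification. Moreover, even granting boundedness, the claim ``bounded quadratic implies constant'' (Lemma \ref{lem:leibman}) requires a torsion-free codomain, and the quotient $H_o/Z_o$ need not be torsion-free; nor does the cyclic-centralizer property of $H$ descend to it. There is also a bookkeeping problem: the two central subgroups $Z_o^{(1)}$ and $Z_o^{(2)}$ arise from independent applications of Theorem \ref{thm:normalpoly} and have no reason to coincide or combine into a single ``common $Z_o$''. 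Finally — and this is the heart of the matter — you never supply the mechanism that upgrades ``$\phi_1$ and $\phi_2$ agree up to bounded correction'' to the exact equality $\phi_1=\phi_2$; that upgrade is precisely what the theorem asserts, and hand-waving about ``conjugation-boundedness from the first step'' does not close it.

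The paper's route avoids all of this. It invokes Proposition \ref{prop:normalhyperb} directly (not the full machinery of Theorem \ref{thm:normalpoly}) to reduce to the case where both $\phi_1$ and $\phi_2$ are unbounded, unital, \emph{genuinely} quadratic maps whose images do not lie in any cyclic subgroup. Supposing $\phi_1(x)\ne\phi_2(x)$ for some $x$, it restricts to $\langle x\rangle\cong\mathbb{Z}$ (using Lemma \ref{lem:leibman} to see both restrictions stay unbounded) and then uses the key relation of Lemma \ref{lem:Z}: for any unital quadratic $\phi:\mathbb{Z}\to H$, the element $[\phi(1),\phi(2)]$ commutes with $\phi(2)^{-1}\phi(1)^2$. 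Combined with the cyclic-centralizer structure of torsion-free hyperbolic groups, this forces either $[\phi(1),\phi(2)]=1$ or $\phi(2)=\phi(1)^2$, and in both cases $\langle\phi(1),\phi(2)\rangle$ is cyclic — contradicting the running assumption. That algebraic identity, flagged in the introduction as the crucial step, is entirely absent from your outline, and without it (or some substitute) the $\phi_1=\phi_2$ branch of the trichotomy does not follow.
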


The crucial step in proving Theorem \ref{thm:distancenearly} is to observe that if $\phi:\mathbb{Z}\to H$ is a unital quadratic map, then $\phi(1)$ and $\phi(2)$ satisfy a non-trivial relation in $H$.

 Another aspect of middle quasi-homomorphisms and quasi-quadratic maps is to consider their actions on multiplicative quadruples introduced by Gowers \cite{Gow98}. Define $\mathfrak{M}(G)$ to be the set of all quadruples $(x_1,x_2,x_3,x_4)\in G^{\times 4}$ with the property that $x_1x_2^{-1}x_3x_4^{-1}=1$. Given $\phi:G\to H$, we have a map $\mu_\phi:\mathfrak{M}(G)\to H$ given by $\mu_\phi(x_1,x_2,x_3,x_4)=\phi(x_1)\phi(x_2)^{-1}\phi(x_3)\phi(x_4)^{-1}$. We then have:
 \begin{theorem}
    \label{thm:multq}
    Let $\phi:G\to H$ be a map.
    \begin{enumerate}
        \item $\phi$ is a middle quasi-homomorphism if and only if $\mu_\phi$ is bounded.
        \item Let $\phi(1)=1$. Then $\phi$ is quasi-quadratic if and only if $\mu_\phi$ is $G$-equivariant up to bounded error.
    \end{enumerate}
 \end{theorem}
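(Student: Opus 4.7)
The plan is to reduce both parts to short direct computations: part~(1) via a clever parameterization of $\mathfrak{M}(G)$, and part~(2) via identifying $\mu_\phi$ with the second iterated non-commutative difference of $\phi$.

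For part~(1), the forward direction hinges on the observation that the constraint $x_1x_2^{-1}x_3x_4^{-1}=1$ rewrites as $x_2x_1^{-1}=x_3x_4^{-1}=:y$, so every tuple in $\mathfrak{M}(G)$ takes the form $(x_1,yx_1,yx_4,x_4)$. If $\phi$ is a middle quasi-homomorphism, then $\phi(yx_i)=\phi(y)\,m_i\,\phi(x_i)$ with $m_i:=\phi(y)^{-1}\phi(yx_i)\phi(x_i)^{-1}\in M(\phi)$; substituting into $\mu_\phi$ yields a complete telescoping:
\[
\mu_\phi(x_1,yx_1,yx_4,x_4)=\phi(x_1)\bigl[\phi(x_1)^{-1}m_1^{-1}\phi(y)^{-1}\bigr]\bigl[\phi(y)m_2\phi(x_4)\bigr]\phi(x_4)^{-1}=m_1^{-1}m_2\in M(\phi)^{-1}M(\phi),
\]
which is bounded. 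Conversely, for arbitrary $x,y\in G$ the quadruple $(1,x,xy,y)$ lies in $\mathfrak{M}(G)$ and $\mu_\phi(1,x,xy,y)=\phi(1)\cdot\bigl[\phi(x)^{-1}\phi(xy)\phi(y)^{-1}\bigr]$, so left-invariance of $d$ transports boundedness of $\mu_\phi$ to boundedness of $M(\phi)$.

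For part~(2), with the convention $\partial_g\phi(x):=\phi(gx)\phi(x)^{-1}$ (which I take to be the one underlying the definition of $P_d(\phi)$ in Section~\ref{s:qpoly}), direct expansion yields
\[
\partial_k\partial_h\phi(x)=\phi(hkx)\phi(kx)^{-1}\phi(x)\phi(hx)^{-1}=\mu_\phi(hkx,kx,x,hx).
\]
Since every multiplicative quadruple can be written uniquely in the form $(hkx,kx,x,hx)$ with $x=x_3$, $h=x_4x_3^{-1}$, $k=x_2x_3^{-1}$, this identifies $\mu_\phi$ with the second iterated difference operator. Iterating once more,
\[
\partial_g\partial_k\partial_h\phi(x)=\mu_\phi(hkgx,kgx,gx,hgx)\cdot\mu_\phi(hkx,kx,x,hx)^{-1},
\]
and a short verification shows that $(hkgx,kgx,gx,hgx)$ is the diagonal right-translate of $(hkx,kx,x,hx)$ by $g':=x^{-1}gx\in G$. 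As $g$ varies over $G$ (with $x$ fixed), $g'$ also exhausts $G$, so the third difference is precisely the discrepancy of $\mu_\phi$ under the diagonal right $G$-action on $\mathfrak{M}(G)$; therefore $\phi$ is quasi-quadratic if and only if $\mu_\phi$ is $G$-equivariant (with trivial action on the target) up to bounded error. The assumption $\phi(1)=1$ is used to avoid boundary-constant discrepancies when setting up the iterated-difference identities.

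The main technical obstacle is the bookkeeping around conventions in part~(2): pinning down the exact form of the non-commutative difference operator used in the definition of $P_d(\phi)$ in Section~\ref{s:qpoly}, and verifying that the intended $G$-equivariance is with respect to the diagonal right action on $\mathfrak{M}(G)$ with trivial action on $H$. With a different convention the roles of $h,k,g$ or of left/right may need to be interchanged, but the structure of the argument is unchanged.
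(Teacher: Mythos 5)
Your part~(1) is correct and is essentially the paper's argument (Lemma~\ref{lem:Aphi}): both decompositions express $A(\phi)\subseteq M(\phi)^{-1}M(\phi)$; your parameterization $(x_1,yx_1,yx_4,x_4)$ with $y=x_2x_1^{-1}=x_3x_4^{-1}$ is a clean way to make the telescoping transparent, and the converse via $(1,x,xy,y)$ is identical.

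Your part~(2) is correct and takes a genuinely different (and arguably more structural) route than the paper's. The paper proves Proposition~\ref{prop:equiv} by invoking the explicit closed form of $(\mathfrak{d}_{g_1,g_2,g_3}\phi)(1)$ from Lemma~\ref{lem:dg1g2g3}, substituting $g_1=x_3$, $g_2=x_2x_3^{-1}$, $g_3=x_4x_3^{-1}$ to obtain the special invariance $\mu_\phi(\mathbf{x})=\mu_\phi(\mathbf{x}\cdot x_3^{-1})$, and then bootstrapping to full equivariance; the quasi-case is handled separately via $\mathbf{x}^{\rm opp}$. You instead identify $\mu_\phi$ directly with the second iterated difference, $\mathfrak{d}_k\mathfrak{d}_h\phi(x)=\mu_\phi(hkx,kx,x,hx)$, observe that $(h,k,x)\mapsto(hkx,kx,x,hx)$ parameterizes $\mathfrak{M}(G)$, and read off that the third difference is exactly the discrepancy under the diagonal right $G$-action. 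This makes the underlying mechanism visible at a glance (the same algebra is hidden inside Lemma~\ref{lem:dg1g2g3}), and it handles the quadratic and quasi-quadratic cases uniformly.

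Two small steps should be made explicit to close the argument. First, your argument equates quasi-quadraticity with boundedness of the set $\{\mathfrak{d}_g\mathfrak{d}_k\mathfrak{d}_h\phi(x): g,k,h,x\in G\}$, whereas the definition only demands boundedness of $P_2(\phi)$, i.e.\ the values at $x=1$. The equivalence holds: applying the paper's identity $(\mathfrak{d}_g\psi)(x)=(\mathfrak{d}_{gx}\psi)(1)(\mathfrak{d}_x\psi)(1)^{-1}$ to $\psi=\mathfrak{d}_k\mathfrak{d}_h\phi$ gives $\mathfrak{d}_g\mathfrak{d}_k\mathfrak{d}_h\phi(x)\in P_2(\phi)P_2(\phi)^{-1}$, so bounding at $1$ bounds everywhere, but this is a step and not a tautology. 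Second, the proposition's statement of ``equivariant up to bounded error'' uses $d(\mu_\phi(\mathbf{x}\cdot t),\mu_\phi(\mathbf{x}))$, i.e.\ $\lVert\mu_\phi(\mathbf{x})^{-1}\mu_\phi(\mathbf{x}\cdot t)\rVert$ for the left-invariant $d$, whereas your discrepancy is $\mu_\phi(\mathbf{y}\cdot g')\mu_\phi(\mathbf{y})^{-1}$. These sets coincide because $\mu_\phi(\mathbf{y}^{\rm opp})=\mu_\phi(\mathbf{y})^{-1}$ and $(\mathbf{y}\cdot g')^{\rm opp}=\mathbf{y}^{\rm opp}\cdot g'$, so the change of variables $\mathbf{z}=\mathbf{y}^{\rm opp}\cdot g'$, $g=g'^{-1}$ converts one form into the other; this is exactly the $\mathbf{x}^{\rm opp}$ trick the paper uses, and it is worth a sentence. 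Finally, your remark that $\phi(1)=1$ is needed ``to avoid boundary-constant discrepancies'' is misleading in your formulation: your key identity $\mathfrak{d}_k\mathfrak{d}_h\phi(x)=\mu_\phi(hkx,kx,x,hx)$ holds for any $\phi$, and the equivalence you derive does not actually invoke $\phi(1)=1$; the hypothesis is inherited from the statement and is harmless, but you should not claim it is doing work that it is not.
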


\section{Middle quasi-homomorphisms}
\label{s:middle}

\noindent
\paragraph{\bf Notations.}
 Let $G$ be a group, and $H$ be a discrete group with a proper left-invariant metric $d$. Given $h\in H$, we write $\lVert h\rVert=d(h,1)$. For $\phi,\psi : G\to H$ we define
$$d(\phi,\psi) =\sup_{x\in G} d(\phi(x),\psi(x))$$
and $\lVert\phi\rVert=d(\phi, 1)$, where $1:G\to H$ s the map sending all $g\in G$ to $1$. 

Let $S$ be a bounded subset of $H$ and $a,b\in H$. As in \cite{FK16} we write $a\sim_S b$ iff there exists $s\in S$ with $a=bs$. Note that if $a\sim_S b$, then $d(a,b)\le\max_{s\in S} \lVert s\rVert$.

Given two subsets $A$ and $B$ of a group, we denote $AB=\{ ab\mid a\in A,b\in B\}$ and $A^{-1}=\{ a^{-1}\mid a\in A\}$. A set $A$ is said to be symmetric if $A=A^{-1}$. Given a positive integer $n$ we set $A^n$ to be the set of all products of the form $a_1a_2\cdots a_n$, where $a_i\in A$, and $A^{-n}=(A^{-1})^n$.

Let $a,b$ be elements of a group. Then we write $a^b=b^{-1}ab$ and $[a,b]=a^{-1}a^b$. We denote the inner automorphism of the group induced by conjugation with $a$ from the left by $\ad a$, so $(\ad a)(x)=axa^{-1}$.

\medskip

\paragraph{\bf Quasi-homomorphisms.}
We collect here some basic properties of quasi-homomorphisms. The following is immediate:

\begin{lemma}
    \label{lem:quasi}
    Let $\phi:G\to H$ be a quasi-homomorphism with $D=D(\phi)$. Let $x,y\in G$.
    \begin{enumerate}
        \item $\phi(xy)\sim_D\phi(x)\phi(y)$.
        \item $\phi(x)^{-1}\sim_{D^2}\phi(x^{-1})$.
    \end{enumerate}
\end{lemma}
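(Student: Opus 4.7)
The plan is short: both items are formal unpackings of the definition of $D(\phi)$. For (1), I simply read off the definition. The element $s := \phi(y)^{-1}\phi(x)^{-1}\phi(xy)$ lies in $D$ by construction of the defect set, and rearranging gives $\phi(xy) = \phi(x)\phi(y)\,s$, which is precisely the statement $\phi(xy)\sim_D \phi(x)\phi(y)$.

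For (2), the strategy is to combine two well-chosen instances of the defect. Applying (1) to the pair $(x,x^{-1})$ produces some $d_1\in D$ with
$$\phi(1) = \phi(x)\phi(x^{-1})\,d_1,$$
which, left-multiplied by $\phi(x)^{-1}$, rearranges to
$$\phi(x)^{-1} = \phi(x^{-1})\,d_1\,\phi(1)^{-1}.$$
It then remains to observe that $\phi(1)^{-1}\in D$, which falls out of evaluating the defining expression of $D$ at $x=y=1$: indeed $\phi(1)^{-1}\phi(1)^{-1}\phi(1) = \phi(1)^{-1}\in D$. Hence $d_1\phi(1)^{-1}\in D\cdot D = D^2$, and therefore $\phi(x)^{-1}\in \phi(x^{-1})\,D^2$, which is the claim $\phi(x)^{-1}\sim_{D^2}\phi(x^{-1})$.

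There is essentially no obstacle here; the only point one might worry about is whether $\phi(1)^{-1}$ really contributes to $D$ on its own, but the $x=y=1$ instance settles this immediately. The entire argument is a couple of lines of symbol-pushing using only the definitions of $D(\phi)$ and of the relation $\sim_S$.
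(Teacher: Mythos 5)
Your proof is correct and amounts to the direct definition-chase the paper has in mind (the paper just labels the lemma ``immediate'' without writing it out). Part (1) is exactly the rearrangement of $\phi(y)^{-1}\phi(x)^{-1}\phi(xy)\in D$, and in part (2) the key observation that $\phi(1)^{-1}\in D$ (from $x=y=1$) is precisely what upgrades $d_1\phi(1)^{-1}$ to a $D^2$ element, matching the notation $A^n$ and the relation $a\sim_S b \Leftrightarrow a=bs$ for some $s\in S$ as defined in the paper.
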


The set of all quasi-homomorphisms $G\to H$ will be denoted by $\QHom(G,H)$.
Alternatively, set 
$$D^*(\phi)=\{\phi(x)\phi(y)\phi(xy)^{-1}\mid x,y\in G\}.$$
This is the {\it right defect set} of $\phi$.
Then it can be shown that $\phi\in\QHom(G,H)$ if and only if $D^*(\phi)$ is bounded \cite[Proposition 3.1.6]{Heu19}. 

We call the group
$\Delta_\phi =\langle D(\phi)\rangle$
to be the {\it defect subgroup} of $\phi$. For $R>0$ and $S\subseteq H$, we denote the $R$-neighborhood of $S$ in $H$ by $\mathcal{N}_R(S)$.

\begin{lemma}[Lemma 2.3 and Corollary 3.1 of \cite{FK16}]
    \label{lem:delta}
    Let the map $\phi:G\to H$ be a quasi-homomorphism. Then:
    \begin{enumerate}
        \item $\phi (G)$ is contained in the normalizer $N_H(\Delta_\phi)$ of $\Delta_\phi$ in $H$.
        \item  There exists $R>0$ such that $\phi(G)\subseteq \mathcal{N}_R(C_H(\Delta _\phi))$.
        \item There exists a finite subset $Y$ of $H$ such that $\phi(G)\cup\phi(G)^{-1}$ is covered by the union of cosets $C_H(\Delta _\phi)y$, where $y\in Y$.
    \end{enumerate}  
\end{lemma}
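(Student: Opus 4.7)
All three claims will flow from a single associativity identity. For $a,b\in G$ write $\phi(ab)=\phi(a)\phi(b)d_{a,b}$ with $d_{a,b}=\phi(b)^{-1}\phi(a)^{-1}\phi(ab)\in D$. Expanding $\phi(xyz)$ in its two associative orderings, cancelling the common prefix $\phi(x)\phi(y)$, and rearranging produces
\begin{equation*}
\phi(z)^{-1}d_{x,y}\phi(z)=d_{y,z}\,d_{x,yz}\,d_{xy,z}^{-1}\in D\cdot D\cdot D^{-1}.
\end{equation*}
This identity---call it $(*)$---is the engine of everything below.

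Since $D$ generates $\Delta_\phi$, $(*)$ immediately yields $\phi(z)^{-1}\Delta_\phi\phi(z)\subseteq\Delta_\phi$ for every $z\in G$. To establish the reverse inclusion (and hence (1)), I substitute $z^{-1}$ for $z$ and invoke Lemma \ref{lem:quasi}(2) to write $\phi(z^{-1})=\phi(z)^{-1}s^{-1}$ with $s\in D^2\subseteq\Delta_\phi$; rearranging then gives $\phi(z)\Delta_\phi\phi(z)^{-1}\subseteq s^{-1}\Delta_\phi s=\Delta_\phi$, so $\phi(G)\subseteq N_H(\Delta_\phi)$.

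For (2) and (3), consider the conjugation map $\pi\colon N_H(\Delta_\phi)\to\Aut(\Delta_\phi)$, whose kernel is $C_H(\Delta_\phi)$. Identity $(*)$ says that for each $d\in D$ the set $\{\phi(z)^{-1}d\phi(z):z\in G\}$ is contained in the finite set $D\cdot D\cdot D^{-1}$; since $\Delta_\phi$ is generated by the finite set $D$ and each generator has only finitely many possible images, the set of automorphisms $\{\pi(\phi(z)^{-1}):z\in G\}$ is finite. Inverting inside $\Aut(\Delta_\phi)$ then shows $\pi(\phi(G))$ itself is finite. Picking preimages $h_1,\dots,h_k\in\phi(G)$, one per element of $\pi(\phi(G))$, yields $\phi(G)\subseteq\bigcup_{i=1}^k C_H(\Delta_\phi)h_i$, which proves (2) with $R=\max_i\lVert h_i\rVert$ by left-invariance of the metric. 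For the $\phi(G)^{-1}$ half of (3), Lemma \ref{lem:quasi}(2) gives $\phi(x)^{-1}=\phi(x^{-1})s$ with $s\in D^2$, so $\phi(G)^{-1}$ lies in the finitely many cosets $C_H(\Delta_\phi)h_i s$; taking $Y$ to be the union of both finite lists of coset representatives completes (3). The only step I expect to require genuine care is the reverse inclusion in (1): it is not symmetric to the forward one and must be routed through Lemma \ref{lem:quasi}(2) applied at $z^{-1}$; once that is in hand, the finiteness arguments for (2) and (3) fall out cleanly.
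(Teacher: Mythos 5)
Your proof is correct. Note that the paper itself does not prove this lemma: parts (1) and (2) are cited directly from Fujiwara--Kapovich (\cite[Lemma 2.3 and Corollary 3.1]{FK16}), and the only in-paper content is the remark that (3) follows from (2) together with Lemma \ref{lem:quasi}(2), implicitly using that the closed $R$-ball is finite since $d$ is proper and $H$ discrete. Your identity $(*)$ and the argument via the conjugation map $N_H(\Delta_\phi)\to\Aut(\Delta_\phi)$ are the standard route (essentially what is done in \cite{FK16}), and every step checks out: $(*)$ gives $\phi(z)^{-1}\Delta_\phi\phi(z)\subseteq\Delta_\phi$, the substitution $z\mapsto z^{-1}$ plus Lemma \ref{lem:quasi}(2) (using that $s\in D^2\subseteq\Delta_\phi$ so conjugation by $s$ fixes $\Delta_\phi$) gives the reverse inclusion, and the finiteness of $D\cdot D\cdot D^{-1}$ together with $\langle D\rangle=\Delta_\phi$ yields finiteness of the image in $\Aut(\Delta_\phi)$ and hence the finite coset cover. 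The one small structural difference from the paper's remark: you establish the finite-coset covering of $\phi(G)$ directly and deduce (2) from it, whereas the remark goes the other way, deriving the finite covering from (2) via properness; both are equivalent here, and your route has the minor advantage of not needing properness separately for (2).
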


We remark that (3) follows directly from Lemma \ref{lem:delta} (2) and Lemma \ref{lem:quasi} (2). Also, it is not difficult to see that the above result implies that if $\phi$ is a quasi-homomorphism, then $\Delta_\phi =\langle D^*(\phi)\rangle$.

\medskip

\paragraph{\bf Middle quasi-homomorphisms.} We proceed to exhibiting properties of middle quasi-homomorphisms.
The set of all middle quasi-homomorphisms $G\to H$ will be denoted by $\MQHom(G,H)$.

\begin{proposition}
    \label{prop:x-1xy}
    Let $\phi\in \MQHom(G,H)$ and denote $M=M(\phi)$. Then $\phi (x)^{-1}\sim_{M^2}\phi (x^{-1})^{\phi (1)}$ and $\phi (xy)\sim_{M^2M^{-1}}\phi (x)\phi(y)^{\phi (1)}$ for all $x,y\in G$.
\end{proposition}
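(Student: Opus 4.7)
The plan is to prove the two assertions in order, with the second following from the first via a well-chosen application of the defining relation of $M=M(\phi)$. A preliminary observation, used throughout, is that $\phi(1)^{-1}\in M$: specializing the definition of $M(\phi)$ to $x=y=1$ gives $\phi(1)^{-1}\phi(1)\phi(1)^{-1}=\phi(1)^{-1}$.

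For the first assertion, I would specialize the defining property of $M(\phi)$ to the pair $(x^{-1},x)$, which yields
$$m_1:=\phi(x^{-1})^{-1}\phi(1)\phi(x)^{-1}\in M.$$
Rearranging gives $\phi(x)^{-1}=\phi(1)^{-1}\phi(x^{-1})m_1$. Comparing with $\phi(x^{-1})^{\phi(1)}=\phi(1)^{-1}\phi(x^{-1})\phi(1)$ then produces
$$(\phi(x^{-1})^{\phi(1)})^{-1}\phi(x)^{-1}=\phi(1)^{-1}m_1\in M\cdot M,$$
which is the first claim.

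For the second assertion, the crucial choice is to specialize instead to the pair $(xy,y^{-1})$, yielding $m_2:=\phi(xy)^{-1}\phi(x)\phi(y^{-1})^{-1}\in M$, or equivalently $\phi(xy)=\phi(x)\phi(y^{-1})^{-1}m_2^{-1}$. Invoking the first assertion with $x$ replaced by $y^{-1}$ gives $\phi(y^{-1})^{-1}=\phi(y)^{\phi(1)}t$ for some $t\in M^2$; substituting then produces $\phi(xy)=\phi(x)\phi(y)^{\phi(1)}\cdot tm_2^{-1}$ with $tm_2^{-1}\in M^2M^{-1}$, as required.

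I do not anticipate a serious obstacle: the whole argument reduces to short bookkeeping. The one thing to keep straight is the convention $a\sim_S b\Leftrightarrow a=bs$, which fixes on which side the error set $S$ multiplies and dictates the signs throughout. The only real idea is the choice of the pairs $(x^{-1},x)$ and $(xy,y^{-1})$, which respectively bring $\phi(1)$ into play via $x^{-1}\cdot x=1$ and convert the MQH identity on $\phi(xy)$ into one involving $\phi(y^{-1})^{-1}$, ready to be attacked via the first claim.
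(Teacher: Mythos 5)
Your proof is correct and is essentially the paper's own argument: you specialize the middle-defect set to the pair $(x^{-1},x)$ for the first claim and to $(xy,y^{-1})$ for the second, using $\phi(1)^{-1}\in M$ along the way — exactly the paper's choices, with slightly more explicit bookkeeping of the error terms.
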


\begin{proof}
    There exists $t\in M$ such that $\phi(1)=\phi(x^{-1}x)=\phi(x^{-1})t\phi (x)$.
    Therefore, $\phi(x)^{-1}=\phi (1)^{-1}\phi(x^{-1})t=\phi(x^{-1})^{\phi(1)}\phi(1)^{-1}t$. Observe that $\phi(1)^{-1}t\in M^2$, hence the first statement follows.

    As $\phi$ is a middle quasi-homomorphism, we have
    \begin{equation}
        \label{eq:xy3}
        \phi(x)\phi (y^{-1})^{-1}=\phi(xy\cdot y^{-1})\phi(y^{-1})^{-1}\sim _M \phi(xy)
    \end{equation}
    holds true for all $x,y\in G$.
    From the above it follows that $\phi(y^{-1})^{-1}\sim_{M^2}\phi(y)^{\phi(1)}$. Thus we get that 
    \begin{equation}
        \label{eq:xy4}
        \phi(x)\phi(y^{-1})^{-1}\sim_{M^2} \phi(x)\phi(y)^{\phi(1)} 
    \end{equation}   
    for all $x,y\in G$.
    To sum up, the equations \eqref{eq:xy3} and \eqref{eq:xy4} give
    $$\phi(xy)\sim_{M^{-1}}\phi(x)\phi(y^{-1})^{-1}\sim_{M^2}  \phi(x)\phi(y)^{\phi(1)}$$
    for all $x,y\in G$, hence the result.
\end{proof}

The above elementary result has an important consequence:

\begin{corollary}
    \label{cor:unitalm}
    Every unital middle quasi-homomorphism is a quasi-homomorphism.
\end{corollary}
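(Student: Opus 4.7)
The plan is to derive the corollary as an immediate specialization of Proposition \ref{prop:x-1xy}. Let $\phi \in \MQHom(G,H)$ be unital, so $\phi(1) = 1$, and set $M = M(\phi)$, which is bounded by hypothesis.

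The second statement of Proposition \ref{prop:x-1xy} gives, for all $x,y \in G$, the relation
$$\phi(xy) \sim_{M^2 M^{-1}} \phi(x)\phi(y)^{\phi(1)}.$$
Under the unital assumption $\phi(1) = 1$, conjugation by $\phi(1)$ is the identity, so $\phi(y)^{\phi(1)} = \phi(y)$, and the relation simplifies to $\phi(xy) \sim_{M^2 M^{-1}} \phi(x)\phi(y)$. By definition of the notation $\sim_S$, this means that for every $x,y \in G$ the element $\phi(y)^{-1}\phi(x)^{-1}\phi(xy)$ lies in the set $M^2 M^{-1}$.

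Hence the left defect set $D(\phi)$ is contained in $M^2 M^{-1}$, which is a bounded subset of $H$ because $M$ is bounded and the metric $d$ is left-invariant (so products and inverses of bounded sets remain bounded). Therefore $D(\phi)$ is bounded, which is precisely the condition for $\phi$ to be a quasi-homomorphism. There is no real obstacle here; the work has already been done in Proposition \ref{prop:x-1xy}, and the corollary is the observation that the obstruction to being a quasi-homomorphism measured there — namely the twist by $\phi(1)$ — vanishes in the unital case.
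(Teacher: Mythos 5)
Your proof is correct and follows the same route the paper intends: the corollary is stated as an immediate consequence of Proposition~\ref{prop:x-1xy}, and your specialization to $\phi(1)=1$ (so that the conjugation twist disappears and $D(\phi)\subseteq M^2M^{-1}$, a bounded set) is exactly the intended argument, just spelled out in full.
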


Let $F_2$ be a free group of rank two. 
Inspired by the construction of Brooks' quasi-morphisms \cite{Bro81}, 
Fujiwara and Kapovich found a map $f:F_2\to F_2$ and claimed to be a middle quasi-homomorphism, see  \cite[Theorem 9.6]{FK16}. We note that the map $f$ in question satisfies $f(1)=1$, therefore it would actually need to be a quasi-homomorphism by Corollary \ref{cor:unitalm}. 
In particular, it would follow from Lemma \ref{lem:delta} that $f(F_2)$ is contained in some $R$-neighborhood of $C_H(\Delta_f)$. The latter group is either trivial or infinite cyclic.
On the other hand, it is shown in \cite[Theorem 9.6]{FK16} that the image of $f$ is infinite and is not contained in any $R$-neighborhood of any infinite cyclic subgroup of $F_2$. By Lemma \ref{lem:delta}, the map $f$ thus cannot be a quasi-homomorphism. The contradiction apparently comes from a slightly inaccurate calculation of the word $s_2$ in the proof of \cite[Theorem 9.6]{FK16}.

We proceed by showing that the middle quasi-homomorphisms behave well when being perturbed by a constant. 

\begin{proposition}
    \label{prop:qhomperturb}
    Let $\phi\in\MQHom(G,H)$. For $c\in H$ define the map $\phi_c:G\to H$ by $\phi_c(g)=\phi(g)c$. Then $\phi_c\in\MQHom(G,H)$.
\end{proposition}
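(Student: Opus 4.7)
The plan is a direct one-line computation of $M(\phi_c)$ followed by an observation about boundedness under left translation.

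First, I would substitute the definition $\phi_c(g) = \phi(g)c$ into the middle defect expression. For arbitrary $x,y \in G$, expand
\[
\phi_c(x)^{-1}\phi_c(xy)\phi_c(y)^{-1} = c^{-1}\phi(x)^{-1}\cdot\phi(xy)c\cdot c^{-1}\phi(y)^{-1} = c^{-1}\bigl(\phi(x)^{-1}\phi(xy)\phi(y)^{-1}\bigr),
\]
since the middle pair $c\cdot c^{-1}$ cancels. Taking the union over $x,y \in G$ yields the set identity $M(\phi_c) = c^{-1}M(\phi)$.

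Second, I would argue that $c^{-1}M(\phi)$ is bounded. Since $\phi\in\MQHom(G,H)$, the set $M(\phi)$ lies in some ball $B(1,R)$. By left-invariance of $d$, the map $h\mapsto c^{-1}h$ is an isometry, so $c^{-1}M(\phi)\subseteq B(c^{-1},R)$, and then by the triangle inequality $c^{-1}M(\phi)\subseteq B(1, R + \lVert c^{-1}\rVert)$. Hence $M(\phi_c)$ is bounded and $\phi_c \in \MQHom(G,H)$.

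There is essentially no obstacle here; the only thing to watch for is the order of the constants and the fact that the ``$c$'' in the middle of $\phi(xy)c$ is exactly cancelled by the leading $c^{-1}$ of $\phi_c(y)^{-1}$, which is what makes the middle defect rigid under right multiplication by a constant. (Note that the same argument would not go through for the ordinary defect set $D$, which is consistent with Theorem~\ref{thm:qhom}: right-constant perturbations preserve the middle-quasi-homomorphism property but not the quasi-homomorphism property.)
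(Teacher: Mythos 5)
Your proof is correct and matches the paper's: both reduce to the one-line computation $\phi_c(x)^{-1}\phi_c(xy)\phi_c(y)^{-1}=c^{-1}\phi(x)^{-1}\phi(xy)\phi(y)^{-1}$, giving $M(\phi_c)\subseteq c^{-1}M(\phi)$. The paper leaves the boundedness of a left translate of a bounded set as immediate, whereas you spell it out via left-invariance of $d$; that is the only (cosmetic) difference.
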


\begin{proof}
    Let $x,y\in G$. Then we have that
    $$\phi_c(x)^{-1}\phi_c(xy)\phi_c(y)^{-1}=c^{-1}\phi(x)^{-1}\phi(xy)\phi(y)^{-1}.$$ This shows that $M(\phi_c)\subseteq c^{-1}M(\phi)$.
\end{proof}

Note that the middle quasi-homomorphisms are also invariant under horizontal shifts. That is, if $\phi\in\MQHom(G,H)$, $a\in G$, and a map $\psi:G\to H$ is defined by $\psi(g)=\phi(ga)$, then
\begin{align*}
    \psi(x)^{-1}\psi(xy)\psi(y)^{-1} &= \phi(xa)^{-1}\phi(xya)\phi(a^{-1}ya)^{-1}\\
    &\cdot \phi(a^{-1}ya)\phi(ya)^{-1}\phi(a)\cdot \phi(a)^{-1}
\end{align*}
implies that $M(\psi)\subseteq M(\phi)M(\phi)^{-1}\cdot \phi(a)^{-1}$, i.e., $\psi$ is also a middle quasi-homomorphism.

From Proposition \ref{prop:qhomperturb} and Corollary \ref{cor:unitalm} we immediately obtain the following:

\begin{corollary}
    \label{cor:mq2q}
    Let $\phi\in\MQHom(G,H)$ and $\epsilon=\phi(1)^{-1}$. Then $\phi_\epsilon\in \QHom(G,H)$.
\end{corollary}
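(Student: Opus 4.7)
The plan is to stitch together the two results immediately preceding the corollary. By Proposition \ref{prop:qhomperturb}, the right-multiplication perturbation $\phi_\epsilon(g)=\phi(g)\epsilon$ with $\epsilon=\phi(1)^{-1}\in H$ is again a middle quasi-homomorphism, since $M(\phi_\epsilon)\subseteq \epsilon^{-1}M(\phi)$ is bounded. The point of choosing this particular constant $\epsilon$ is that it normalizes $\phi_\epsilon$ at the identity: a direct check gives
\[
\phi_\epsilon(1)=\phi(1)\epsilon=\phi(1)\phi(1)^{-1}=1,
\]
so $\phi_\epsilon$ is a \emph{unital} middle quasi-homomorphism.

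With $\phi_\epsilon$ unital, Corollary \ref{cor:unitalm} applies directly and yields $\phi_\epsilon\in\QHom(G,H)$, which is exactly the claim. There is no real obstacle here: the corollary is purely a combination of the closure of $\MQHom$ under constant right-multiplicative perturbations (to replace $\phi$ by a unital representative) with the fact that unital middle quasi-homomorphisms are automatically quasi-homomorphisms. The only substantive content has already been carried out in Proposition \ref{prop:x-1xy} via the elementary manipulation that relates the middle defect set to the left defect set once $\phi(1)=1$; this corollary just packages that observation in the form that every middle quasi-homomorphism sits a single constant away from $\QHom(G,H)$, matching the slogan from the introduction that middle quasi-homomorphisms are affine quasi-homomorphisms.
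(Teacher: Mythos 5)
Your proof is correct and follows exactly the paper's route: the paper itself derives this corollary "from Proposition \ref{prop:qhomperturb} and Corollary \ref{cor:unitalm}," which is precisely the two-step argument you gave (perturb to make $\phi_\epsilon$ unital, then invoke the unital case).
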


The following lemma is a key for studying constant perturbations of quasi-homomorphisms:

\begin{lemma}
    \label{lem:boundconj}
    Let $\phi\in\QHom(G,H)$. Let $\psi: G\to\Delta_\phi$ be any bounded map. Define the maps $\gamma:G\to H$ and $\tilde{\gamma}:G\to H$  by $\gamma(y)=\psi(y)^{\phi(y)}$ and $\tilde{\gamma}(y)=\psi(y)^{\phi(y)^{-1}}$. Then $\lVert\gamma\rVert$ and $\lVert\tilde{\gamma}\rVert$ are bounded in terms of $\phi$ and $\lVert \psi\rVert$ only.
\end{lemma}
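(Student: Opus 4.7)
The plan is to leverage Lemma \ref{lem:delta}(1,2) to decompose the values of $\phi$ into a centralizing part and a bounded error, and then exploit the fact that $\psi(y)\in\Delta_\phi$ so that the centralizing part drops out of the conjugation.

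More precisely, by Lemma \ref{lem:delta}(2), there exists $R=R(\phi)>0$ with $\phi(G)\subseteq\mathcal{N}_R(C_H(\Delta_\phi))$. For each $y\in G$, I would pick $c_y\in C_H(\Delta_\phi)$ with $d(\phi(y),c_y)\le R$ and set $r_y=c_y^{-1}\phi(y)$, so that $\phi(y)=c_yr_y$ and $\lVert r_y\rVert\le R$ by left-invariance of $d$. Lemma \ref{lem:delta}(1) gives $\phi(y)\in N_H(\Delta_\phi)$, and since $c_y$ centralizes $\Delta_\phi$, it follows that $r_y=c_y^{-1}\phi(y)$ also normalizes $\Delta_\phi$. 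In particular, $r_y\psi(y)r_y^{-1}$ and $r_y^{-1}\psi(y)r_y$ both lie in $\Delta_\phi$.

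Now I compute directly:
\[
\gamma(y)=\phi(y)^{-1}\psi(y)\phi(y)=r_y^{-1}c_y^{-1}\psi(y)c_yr_y=r_y^{-1}\psi(y)r_y,
\]
using that $c_y$ centralizes $\psi(y)\in\Delta_\phi$. Similarly,
\[
\tilde{\gamma}(y)=\phi(y)\psi(y)\phi(y)^{-1}=c_yr_y\psi(y)r_y^{-1}c_y^{-1}=r_y\psi(y)r_y^{-1},
\]
where the last equality uses that $r_y\psi(y)r_y^{-1}\in\Delta_\phi$ is centralized by $c_y$. By left-invariance of $d$, $\lVert r_y^{\pm 1}\rVert\le R$, and the triangle inequality gives
\[
\lVert\gamma(y)\rVert,\ \lVert\tilde{\gamma}(y)\rVert\le 2R+\lVert\psi(y)\rVert\le 2R+\lVert\psi\rVert,
\]
which is the desired bound depending only on $\phi$ and $\lVert\psi\rVert$.

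There is no real obstacle here; the only point to be careful about is the conjugation convention ($a^b=b^{-1}ab$, so $\psi(y)^{\phi(y)^{-1}}=\phi(y)\psi(y)\phi(y)^{-1}$) and the fact that we are allowed to use Lemma \ref{lem:delta}(1) to conclude $r_y\in N_H(\Delta_\phi)$, without which the intermediate expression $r_y\psi(y)r_y^{-1}$ would not obviously lie in $\Delta_\phi$ and hence would not obviously be centralized by $c_y$.
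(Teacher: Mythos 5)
Your argument for $\gamma$ coincides with the paper's: both write $\phi(y)=c_y r_y$ with $c_y\in C_H(\Delta_\phi)$ and $\lVert r_y\rVert\le R$ (via Lemma \ref{lem:delta}(2)), and observe that the centralizing factor drops out of the conjugation of $\psi(y)\in\Delta_\phi$, leaving $\gamma(y)=r_y^{-1}\psi(y)r_y$ with explicit bound $2R+\lVert\psi\rVert$. For $\tilde{\gamma}$ your route is genuinely different from and slightly cleaner than the paper's: the paper first replaces $\phi(y)^{-1}$ by $\phi(y^{-1})u$ with $u\in D^2$ via Lemma \ref{lem:quasi}(2) and then applies the same decomposition to $\phi(y^{-1})$, landing in $\bar{B}(1,C)^{\bar{B}(1,R)D^2}$; you instead keep the decomposition $\phi(y)=c_yr_y$ of $\phi(y)$ itself and invoke Lemma \ref{lem:delta}(1) to see that $r_y\in N_H(\Delta_\phi)$, so that $r_y\psi(y)r_y^{-1}\in\Delta_\phi$ is again centralized by $c_y$, giving the same symmetric bound $2R+\lVert\psi\rVert$. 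Your version avoids the $D^2$ factor and the detour through $\phi(y^{-1})$, at the small cost of explicitly using the normalizer statement Lemma \ref{lem:delta}(1). Both are correct; one minor point worth flagging is that your inequality $\lVert r_y^{-1}\rVert\le R$ implicitly uses that a left-invariant metric satisfies $\lVert a^{-1}\rVert=\lVert a\rVert$, which follows from $d(a^{-1},1)=d(1,a)$ and symmetry of $d$, but is worth a word.
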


\begin{proof}
    Let $D=D(\phi)$. Denote $\lVert\psi\rVert=C$. Let $y\in G$. By Lemma \ref{lem:quasi}, there exist $R>0$, depending only on $\phi$, and $z_y\in C_H(\Delta_\phi)$ such that $d(\phi(y),z_y)\le R$. In other words, we may write $\phi(y)=z_yt$ for some $t$ belonging to the closed $R$-ball $\bar{B}(1,R)$ of 1 in $H$. Then
    $$\gamma(y)=\psi(y)^{z_yt}=\psi(y)^t\in \bar{B}(1,C)^{\bar{B}(1,R)}.$$
    It follows that $\lVert\gamma\rVert$ is bounded in terms of $\phi$ and $C$ only.

    By Lemma \ref{lem:quasi}, we have that $\phi(y)^{-1}=\phi(y^{-1})u$ for some $u\in D^2$. As above, we may write $\phi(y^{-1})=z_{y^{-1}}t$ for some $t\in\bar{B}(1,R)$. Then
    $$\tilde{\gamma}(y)=\psi(y)^{z_{y^{-1}}tu}=\psi(y)^{tu}\in \bar{B}(1,C)^{\bar{B}(1,R)D^2}.$$
    It follows that $\lVert\tilde{\gamma}\rVert$ is bounded in terms of $\phi$ and $C$ only.
\end{proof}

\begin{proposition}
    \label{prop:qcmqhom}
    Let $\phi\in \QHom(G,H)$. Then $\phi_c\in\MQHom(G,H)$ for every $c\in H$.
\end{proposition}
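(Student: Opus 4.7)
The plan is to compute $M(\phi_c)$ directly, rewrite it in terms of conjugates of elements of $D(\phi)$ by $\phi(y)$, and then invoke Lemma~\ref{lem:boundconj} to bound those conjugates uniformly.

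First I would expand $\phi_c(g)=\phi(g)c$ to obtain, for all $x,y\in G$,
\[
\phi_c(x)^{-1}\phi_c(xy)\phi_c(y)^{-1}=c^{-1}\phi(x)^{-1}\phi(xy)\phi(y)^{-1},
\]
so right multiplication by $c$ only contributes a uniform left shift by $c^{-1}$. Next, unwinding the definition of the left defect set, for each pair $(x,y)$ there is $d_{x,y}\in D:=D(\phi)$ with $\phi(x)^{-1}\phi(xy)=\phi(y)d_{x,y}$, and substituting gives
\[
\phi_c(x)^{-1}\phi_c(xy)\phi_c(y)^{-1}=c^{-1}\phi(y)d_{x,y}\phi(y)^{-1}.
\]
Thus $M(\phi_c)\subseteq c^{-1}\{\phi(y)d\phi(y)^{-1}:y\in G,\,d\in D\}$.

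It then remains to show that $\{\phi(y)d\phi(y)^{-1}:y\in G,\,d\in D\}$ is bounded. For each fixed $d\in D$, I would apply Lemma~\ref{lem:boundconj} to the constant bounded map $\psi_d:G\to\Delta_\phi$ defined by $\psi_d(y)=d$; the associated $\tilde\gamma_d(y)=\psi_d(y)^{\phi(y)^{-1}}=\phi(y)d\phi(y)^{-1}$ then has $\lVert\tilde\gamma_d\rVert$ bounded in terms of $\phi$ and $\lVert d\rVert$ alone. Since $D$ is finite, taking the maximum over $d\in D$ yields a uniform bound; absorbing the left shift by $c^{-1}$ via the triangle inequality shows $M(\phi_c)$ is bounded, i.e.\ $\phi_c\in\MQHom(G,H)$.

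The computations here are routine; the substantive point is recognizing that the conjugates $\phi(y)d\phi(y)^{-1}$ are exactly the quantities Lemma~\ref{lem:boundconj} is designed to control (with $\psi$ chosen constant on a finite set), so no further obstacle arises.
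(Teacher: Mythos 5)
Your proof is correct and takes essentially the same route as the paper: both reduce $M(\phi_c)$ to the set $c^{-1}\{\,\phi(y)d\phi(y)^{-1} : d\in D(\phi),\ y\in G\,\}$ and then invoke Lemma~\ref{lem:boundconj} (via the map $\tilde\gamma$) to bound the conjugates uniformly. The only cosmetic difference is that you apply the lemma once per $d\in D(\phi)$ with constant $\psi_d\equiv d$ and take a maximum over the finite set $D(\phi)$, whereas the paper invokes the lemma once with the uniform bound in terms of $\lVert\psi\rVert$; these are the same observation.
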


\begin{proof}
    Let $D=D(\phi)$ and pick $c\in H$. Take $x,y\in G$.
    Consider now
    \begin{align*}
        \phi_c(x)^{-1}\phi_c(xy)\phi_c(y)^{-1}
        &= c^{-1}\phi(x)^{-1}\phi(xy)\phi(y)^{-1}\\
        &= c^{-1}\left (\phi(y)^{-1}\phi(x)^{-1}\phi(xy) \right )^{\phi(y)^{-1}}\\
        &= c^{-1}s^{\phi (y)^{-1}}
    \end{align*}
    for $s=\phi(y)^{-1}\phi(x)^{-1}\phi(xy)\in D$. As $D$ is bounded, Lemma \ref{lem:boundconj} implies that there exists a bounded set $T\subseteq H$ such that $s^{\phi (y)^{-1}}\in T$ for all $x,y\in G$. This proves that $M(\phi_c)\subseteq c^{-1}T$, hence the assertion.
\end{proof}

\begin{corollary}
    \label{cor:qhommqhom}
    Every quasi-homomorphism is also a middle quasi-homomorphism.
\end{corollary}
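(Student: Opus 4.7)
The corollary is essentially immediate from the proposition that precedes it. The plan is to invoke Proposition \ref{prop:qcmqhom} with the trivial choice of constant $c = 1 \in H$. By the definition of $\phi_c$, we have $\phi_1(g) = \phi(g) \cdot 1 = \phi(g)$, so $\phi_1$ is literally the map $\phi$. Since Proposition \ref{prop:qcmqhom} guarantees $\phi_c \in \MQHom(G,H)$ for every $c \in H$, setting $c = 1$ yields $\phi \in \MQHom(G,H)$, which is exactly the assertion of the corollary.

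If one preferred an independent proof that does not cite Proposition \ref{prop:qcmqhom}, the entire argument of that proposition specialises cleanly: starting from $\phi \in \QHom(G,H)$ with defect set $D = D(\phi)$, rewrite
\[
\phi(x)^{-1}\phi(xy)\phi(y)^{-1} = \bigl(\phi(y)^{-1}\phi(x)^{-1}\phi(xy)\bigr)^{\phi(y)^{-1}} = s^{\phi(y)^{-1}},
\]
where $s \in D$. Since $D$ is bounded and $s$ is conjugated by $\phi(y)^{-1}$ with $y \in G$, Lemma \ref{lem:boundconj} applied to the constant map $\psi(y) = s$ (or more precisely, to the bounded family $\{s : s \in D\}$) bounds the conjugate uniformly in $y$. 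Hence $M(\phi)$ is bounded, and $\phi \in \MQHom(G,H)$.

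There is no real obstacle here; the only thing to notice is that the defect $D$ of a quasi-homomorphism lies in $\Delta_\phi$, so Lemma \ref{lem:boundconj} applies with $\psi$ valued in $\Delta_\phi$. Consequently the cleanest write-up is the one-line deduction from Proposition \ref{prop:qcmqhom} by specialising to $c = 1$.
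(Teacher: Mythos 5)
Your proposal is correct and matches the paper's (implicit) argument exactly: the corollary is simply Proposition~\ref{prop:qcmqhom} specialized to $c=1$, for which $\phi_c=\phi$. The unwound version you give is just the proof of that proposition with $c=1$, so nothing new is needed.
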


\begin{corollary}
    \label{cor:aff}
    Middle quasi-homomorphisms are precisely constant perturbations of quasi-homomorphisms. 
\end{corollary}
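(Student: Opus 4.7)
The plan is to observe that this corollary is simply the combination of the two preceding results, namely Corollary \ref{cor:mq2q} and Proposition \ref{prop:qcmqhom}, which together provide the two directions of the claimed equivalence.

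For the forward direction, I would start with an arbitrary $\phi\in\MQHom(G,H)$, set $\epsilon=\phi(1)^{-1}$, and invoke Corollary \ref{cor:mq2q} to conclude that $\phi_\epsilon\in\QHom(G,H)$. Then, by definition of the constant perturbation, $\phi(g)=\phi_\epsilon(g)\phi(1)$ for all $g\in G$, so $\phi=(\phi_\epsilon)_{\phi(1)}$ is a constant perturbation (by $\phi(1)$) of the quasi-homomorphism $\phi_\epsilon$.

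For the reverse direction, I would take any map of the form $\phi=\psi_c$, where $\psi\in\QHom(G,H)$ and $c\in H$, and directly apply Proposition \ref{prop:qcmqhom} to conclude $\phi\in\MQHom(G,H)$.

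There is no real obstacle here — the content is fully carried by Corollary \ref{cor:mq2q} (whose depth lies in Proposition \ref{prop:x-1xy}) and Proposition \ref{prop:qcmqhom} (whose depth lies in the conjugation bound of Lemma \ref{lem:boundconj}). The only care needed is a brief notational remark clarifying that, in the notation $\phi_c(g)=\phi(g)c$, the operation of passing from $\phi$ to $\phi_\epsilon$ and back is inverted by right multiplication by $\phi(1)$, so the two directions compose consistently.
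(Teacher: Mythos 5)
Your proposal is correct and follows exactly the paper's implicit argument: the forward inclusion is Corollary~\ref{cor:mq2q} together with the observation that $\phi=(\phi_\epsilon)_{\phi(1)}$, and the reverse inclusion is Proposition~\ref{prop:qcmqhom}. The brief notational check that right-multiplication by $\phi(1)^{-1}$ and by $\phi(1)$ are mutually inverse is the only thing to verify, and you handle it correctly.
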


Thus Theorem \ref{thm:qhom} is proved.
As a consequence, every middle quasi-homomorphism $\phi:G\to H$ is constructible in the sense of \cite[Definition 1.1]{FK16}. That is there exist a finite index subgroup $G_o$ of $G$, a subgroup $H_o$ of $H$, a finitely generated abelian subgroup $A$ of $H_o$ with $[H_o,A]=1$, and a quasi-homomorphism $\phi_o:G_o\to H_o$ such that $d(\phi_o,\phi|_{G_o})<\infty$ and the induced map $\bar{\phi_o}:G_o\to H_o/A$ is a homomorphism.

The following three corollaries can be deduced directly from \cite{FK16}:

\begin{corollary}
    \label{cor:FK_4.3}
    Suppose that $\Gamma$ is an irreducible lattice in a semisimple Lie group of real rank $\ge 2$, and let $H$ be a hyperbolic group. Then every middle quasi-homomorphism $\Gamma\to H$ is bounded.
\end{corollary}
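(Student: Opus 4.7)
The plan is to reduce the statement, via Corollary~\ref{cor:aff}, to the corresponding rigidity theorem for quasi-homomorphisms with hyperbolic targets established by Fujiwara and Kapovich in \cite{FK16}. First, by Corollary~\ref{cor:aff}, the given middle quasi-homomorphism $\phi:\Gamma\to H$ factors as $\phi(g)=\psi(g)c$ for some quasi-homomorphism $\psi:\Gamma\to H$ and a constant $c\in H$. In particular, boundedness of $\phi$ is equivalent to boundedness of $\psi$, so it is enough to prove that $\psi$ is bounded.

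Next, I would invoke the Fujiwara--Kapovich theorem asserting that every quasi-homomorphism from an irreducible lattice in a semisimple Lie group of real rank at least two into a hyperbolic group is bounded; this is the non-commutative analogue of the classical Burger--Monod rigidity theorem for quasi-morphisms, proved in \cite{FK16}. Applied to $\psi$, it yields $\lVert\psi\rVert<\infty$.

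The final step is elementary. For any $g\in\Gamma$, the triangle inequality combined with the left-invariance of $d$ gives
\[
d(\phi(g),1)\le d(\psi(g)c,\psi(g))+d(\psi(g),1)=d(c,1)+d(\psi(g),1)=\lVert c\rVert+\lVert\psi(g)\rVert,
\]
where the middle equality uses $d(\psi(g)\cdot c,\psi(g)\cdot 1)=d(c,1)$. Taking the supremum over $g$ gives $\lVert\phi\rVert\le\lVert c\rVert+\lVert\psi\rVert<\infty$, so $\phi$ is bounded. The only genuinely nontrivial input in this chain is the FK16 rigidity result for quasi-homomorphisms into hyperbolic groups; the reduction itself is automatic once the affine decomposition from Corollary~\ref{cor:aff} is in hand, which is consistent with the paper's own remark that the corollary ``can be deduced directly from \cite{FK16}''.
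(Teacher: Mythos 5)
Your proof is correct and takes the same route as the paper: decompose the middle quasi-homomorphism as a constant perturbation of a quasi-homomorphism via Corollary~\ref{cor:aff}, then apply the Fujiwara--Kapovich boundedness result (\cite[Corollary 4.3]{FK16}) to that quasi-homomorphism. The paper simply cites \cite[Corollary 4.3]{FK16} without spelling out the reduction, which you have made explicit.
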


\begin{proof}
    This follows from \cite[Corollary 4.3]{FK16}.
\end{proof}

\begin{corollary}
    \label{cor:FK_7.2}
    Suppose that $\Gamma$ is an irreducible lattice in a connected semisimple Lie group of rank $\ge 2$, without compact factors. Let $\Sigma$ be an oriented surface of finite type. Then every middle quasi-homomorphism $\Gamma\to {\rm Map}(\Sigma)$ is bounded.

\end{corollary}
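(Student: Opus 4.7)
The plan is to reduce the statement to the corresponding result for (ordinary) quasi-homomorphisms, which is already available in \cite{FK16}. Let $\phi:\Gamma\to{\rm Map}(\Sigma)$ be a middle quasi-homomorphism. By Corollary \ref{cor:mq2q} (equivalently, Corollary \ref{cor:aff}), the map $\phi_\epsilon$ defined by $\phi_\epsilon(g)=\phi(g)\phi(1)^{-1}$ is an (ordinary) quasi-homomorphism from $\Gamma$ into ${\rm Map}(\Sigma)$. Since $\phi$ differs from $\phi_\epsilon$ only by right multiplication by the fixed element $\phi(1)\in{\rm Map}(\Sigma)$, it suffices to show that $\phi_\epsilon$ is bounded.

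Next, I would invoke the Fujiwara--Kapovich result for quasi-homomorphisms from higher rank lattices into mapping class groups of surfaces of finite type (this is the content of \cite[Theorem 7.2]{FK16}, which is obtained there by combining the constructibility theorem with rigidity properties of mapping class group actions and the Margulis normal subgroup theorem). Under the stated hypotheses on $\Gamma$ and $\Sigma$, every quasi-homomorphism $\Gamma\to{\rm Map}(\Sigma)$ is bounded. Applying this to $\phi_\epsilon$ yields that $\lVert\phi_\epsilon\rVert<\infty$, and therefore $\lVert\phi\rVert\le\lVert\phi_\epsilon\rVert+d(1,\phi(1))<\infty$.

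I do not anticipate a serious obstacle here: once Theorem \ref{thm:qhom} is in place, the problem has been translated entirely into a statement about quasi-homomorphisms, and the hard analytic and group-theoretic work has already been carried out in \cite{FK16}. The only point worth pausing on is to check that the notion of \emph{bounded} used in \cite[Theorem 7.2]{FK16} coincides with ours (i.e., bounded with respect to a proper left-invariant word metric on ${\rm Map}(\Sigma)$), which is standard.
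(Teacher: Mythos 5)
Your proof is correct, but it takes a different route from what the paper actually does. The paper's own proof is a one-line citation of \cite[Corollary~7.2]{FK16}, and the intended justification (signalled by the remark that these corollaries ``can be deduced directly from \cite{FK16}'') rests on the earlier observation that middle quasi-homomorphisms are special instances of Fujiwara--Kapovich's \emph{geometric} (and \emph{algebraic}) quasi-homomorphisms, for which \cite[Corollary~7.2]{FK16} already asserts boundedness under the stated hypotheses; no reduction to ordinary quasi-homomorphisms is needed. Your approach instead first invokes Theorem~\ref{thm:qhom} (via Corollary~\ref{cor:mq2q}) to pass to the quasi-homomorphism $\phi_\epsilon(g)=\phi(g)\phi(1)^{-1}$, and then applies the Fujiwara--Kapovich boundedness result for quasi-homomorphisms into ${\rm Map}(\Sigma)$. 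This is logically sound and arguably showcases the utility of the paper's main theorem, at the small cost of being less direct and of depending on Theorem~\ref{thm:qhom}, which the paper's proof avoids. One minor point: you cite \cite[Theorem~7.2]{FK16} where the paper cites \cite[Corollary~7.2]{FK16}; make sure the reference you invoke is the statement for ordinary quasi-homomorphisms (as opposed to the version for geometric/algebraic quasi-homomorphisms), since that is what your reduction requires.
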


\begin{proof}
    This follows from \cite[Corollary 7.2]{FK16}.
\end{proof}

\begin{corollary}
    \label{cor:FK_8.5}
    Suppose that $\Gamma$ is an irreducible lattice in a connected semisimple Lie group of rank $\ge 2$.
    Suppose that $H$ is a fundamental group of graph of groups where every vertex group is hyperbolic. Then every middle quasi-homomorphism $\Gamma\to H$ is bounded.
\end{corollary}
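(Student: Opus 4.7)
The plan is to reduce the statement about middle quasi-homomorphisms to the corresponding statement for ordinary quasi-homomorphisms already established by Fujiwara and Kapovich. The key tool is Theorem \ref{thm:qhom}, or equivalently Corollary \ref{cor:aff}, which identifies middle quasi-homomorphisms as precisely the constant perturbations of quasi-homomorphisms. Since constant perturbations preserve boundedness, transferring results about unbounded quasi-homomorphisms to the middle setting is essentially automatic.

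Concretely, given a middle quasi-homomorphism $\phi : \Gamma \to H$, Corollary \ref{cor:aff} provides a quasi-homomorphism $\psi : \Gamma \to H$ and an element $c \in H$ with $\phi(g) = \psi(g)c$ for all $g \in \Gamma$. Left-invariance of $d$ gives $d(\phi(g), \psi(g)) = \lVert c \rVert$ uniformly in $g$, so $\phi$ is bounded if and only if $\psi$ is bounded. Hence it suffices to argue that $\psi$ is bounded.

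For this I invoke \cite[Corollary 8.5]{FK16}: under the hypotheses that $\Gamma$ is an irreducible lattice in a connected semisimple Lie group of rank $\ge 2$ and that $H$ is the fundamental group of a graph of groups whose vertex groups are hyperbolic, every quasi-homomorphism $\Gamma \to H$ is bounded. Applied to $\psi$, this yields that $\psi$ is bounded, and therefore $\phi$ is bounded as well.

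There is no real obstacle beyond the appeal to Theorem \ref{thm:qhom}; once middle quasi-homomorphisms are recognized as affine quasi-homomorphisms, every rigidity or vanishing result from \cite{FK16} that is stable under constant translation passes through unchanged. The same template yields Corollaries \ref{cor:FK_4.3} and \ref{cor:FK_7.2} from their respective quasi-homomorphism counterparts.
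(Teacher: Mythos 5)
Your proposal is correct and matches the paper's (implicit) argument: the paper simply cites \cite[Corollary 8.5]{FK16}, and the reduction you spell out — writing the middle quasi-homomorphism as a constant perturbation $\psi(\cdot)c$ of a quasi-homomorphism via Corollary \ref{cor:aff}, noting that boundedness is preserved under constant translation — is exactly what that one-line citation relies on.
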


\begin{proof}
    This follows from \cite[Corollary 8.5]{FK16}.
\end{proof}

\medskip

\paragraph{\bf Middle quasi-homomorphisms and multiplicative quadruples.}
We provide two additional descriptions of middle quasi-homomorphisms. The first one is a direct generalization of a characterization of affine quasi-morphisms exhibited by Tao \cite{Tao18}.
A quadruple $(x_1,x_2,x_3,x_4)\in G^{\times 4}$ is said to be {\it multiplicative} \cite{Gow98,TW07} if we have that $x_1x_2^{-1}x_3x_4^{-1}=1$. The set of all multiplicative quadruples of $G$ will be denoted by
$\mathfrak{M}(G)$.

Given a map $\phi:G\to H$, define
$$A(\phi)=\{ \phi(x_1)\phi(x_2)^{-1}\phi(x_3)\phi(x_4)^{-1}\mid (x_1,x_2,x_3,x_4)\in\mathfrak{M}(G)\}.$$

\begin{lemma}
    \label{lem:Aphi}
    Let $\phi:G\to H$. Denote $A=A(\phi)$ and $M=M(\phi)$. Then $\phi(1)M\subseteq A\subseteq M^{-1}M$.
\end{lemma}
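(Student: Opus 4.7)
The plan is to verify both inclusions directly by producing, in each case, a suitable parametrization of multiplicative quadruples; no machinery beyond the definitions of $M(\phi)$ and $A(\phi)$ will be needed.

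For the inclusion $\phi(1)M\subseteq A$, I would observe that for any $x,y\in G$ the quadruple $(1,x,xy,y)$ lies in $\mathfrak{M}(G)$, since $1\cdot x^{-1}\cdot(xy)\cdot y^{-1}=1$. Evaluating the expression defining $A$ on this quadruple yields exactly
$$\phi(1)\phi(x)^{-1}\phi(xy)\phi(y)^{-1}=\phi(1)\cdot\bigl(\phi(x)^{-1}\phi(xy)\phi(y)^{-1}\bigr)\in A,$$
and as $x,y$ range over $G$ the right-hand factor exhausts $M$, giving the inclusion.

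For the reverse inclusion $A\subseteq M^{-1}M$, I would parametrize a generic multiplicative quadruple $(x_1,x_2,x_3,x_4)$ by the three free parameters $a:=x_1$, $b:=x_4$ and $u:=x_2x_1^{-1}$, so that $x_2=ua$ and, from the relation $x_1x_2^{-1}x_3x_4^{-1}=1$, also $x_3=ub$. The associated element of $A$ then reads $\phi(a)\phi(ua)^{-1}\phi(ub)\phi(b)^{-1}$; inserting $\phi(u)\phi(u)^{-1}$ between the middle factors splits it as
$$\bigl[\phi(a)\phi(ua)^{-1}\phi(u)\bigr]\cdot\bigl[\phi(u)^{-1}\phi(ub)\phi(b)^{-1}\bigr].$$
I would then recognize the second bracket as an element of $M$ (taking $x=u$, $y=b$ in the definition of $M$) and the first bracket as the inverse of $\phi(u)^{-1}\phi(ua)\phi(a)^{-1}\in M$ (taking $x=u$, $y=a$), hence as an element of $M^{-1}$. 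The only mild piece of ingenuity in the whole argument is spotting the substitution $x_2=ua$, $x_3=ub$, which simultaneously makes the multiplicativity relation automatic and exposes the desired factorization; everything else is formal manipulation of the defining sets.
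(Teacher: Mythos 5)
Your proof is correct and follows essentially the same route as the paper's. For the first inclusion both arguments use the quadruple $(1,x,xy,y)$, and for the second inclusion your insertion of $\phi(u)\phi(u)^{-1}$ with $u=x_2x_1^{-1}$ is exactly the paper's splitting $\phi(x_1)\phi(x_2)^{-1}\phi(x_3)\phi(x_4)^{-1}=\bigl[\phi(x_1)\phi(x_2)^{-1}\phi(x_2x_1^{-1})\bigr]\bigl[\phi(x_2x_1^{-1})^{-1}\phi(x_3)\phi(x_4)^{-1}\bigr]$ in different notation.
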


\begin{proof}
    Let $(x_1,x_2,x_3,x_4)\in\mathfrak{M}(G)$. Then 
    \begin{align*}
        \phi(x_1)\phi(x_2)^{-1}\phi(x_3)\phi(x_4)^{-1} &=
    \phi(x_1)\phi(x_2)^{-1}\phi(x_2x_1^{-1})\\
    &\cdot \phi(x_2x_1^{-1})^{-1}\phi(x_3)\phi(x_1x_2^{-1}x_3),
    \end{align*}
    hence the inclusion $A\subseteq M^{-1}M$. For the remaining part observe that $(1,x,xy,y)\in\mathfrak{M}(G)$ for all $x,y\in G$, therefore $\phi(1)\phi(x)^{-1}\phi(xy)\phi(y)^{-1}\in A$. This concludes the proof.
\end{proof}

\begin{corollary}
    \label{cor:taoblog}
    A map $\phi:G\to H$ is a middle quasi-homomorphism if and only if the set
    $A(\phi)$
    is bounded.
\end{corollary}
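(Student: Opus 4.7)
The plan is to derive this directly from Lemma \ref{lem:Aphi}, which already sandwiches $A(\phi)$ between $\phi(1)M(\phi)$ on the left and $M(\phi)^{-1}M(\phi)$ on the right. Everything will reduce to the observation that left multiplication by a fixed element of $H$ is an isometry (by left-invariance of $d$), hence preserves boundedness, and that $\|h^{-1}\|=\|h\|$ (from left-invariance combined with the symmetry of the metric).

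For the forward direction, I would assume $\phi\in\MQHom(G,H)$, so that $M:=M(\phi)$ is bounded, say $\|m\|\le C$ for all $m\in M$. Then $\|m^{-1}\|=\|m\|\le C$, and for any $m_1,m_2\in M$ the triangle inequality gives $\|m_1^{-1}m_2\|\le\|m_1^{-1}\|+\|m_2\|\le 2C$. Hence $M^{-1}M$ is bounded, and by Lemma \ref{lem:Aphi} so is $A(\phi)\subseteq M^{-1}M$.

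Conversely, if $A=A(\phi)$ is bounded, Lemma \ref{lem:Aphi} gives $\phi(1)M\subseteq A$, so $M\subseteq \phi(1)^{-1}A$. Since left multiplication by $\phi(1)^{-1}$ is an isometry, $\phi(1)^{-1}A$ is bounded, hence $M=M(\phi)$ is bounded and $\phi$ is a middle quasi-homomorphism. There is no real obstacle here; the lemma is essentially a packaged restatement of Lemma \ref{lem:Aphi} once one notes that boundedness is preserved under inversion, triangle-inequality products, and left translation.
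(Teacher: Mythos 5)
Your proof is correct and follows the same route as the paper, which states the corollary as an immediate consequence of Lemma~\ref{lem:Aphi} without further elaboration: the sandwich $\phi(1)M(\phi)\subseteq A(\phi)\subseteq M(\phi)^{-1}M(\phi)$ transfers boundedness in both directions, exactly as you argue via left-invariance of the metric and the triangle inequality.
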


Note that this proves item (1) of Theorem \ref{thm:multq}.
\medskip

\paragraph{\bf Translates of left quasi-subgroups.}
Our next characterization of middle quasi-homomorphisms is of coarse-geometric nature. Recall \cite{CHT24} that a subset $\Lambda$ of a group $\Gamma$ is called a {\it left quasi-subgroup} of $\Gamma$ if there exist finite subsets $F_1$ and $F_2$ of $\Gamma$ such that $\Lambda^{-1}\subseteq \Lambda F_1$ and $\Lambda^2\subseteq \Lambda F_2$. Unital symmetric left quasi-subgroups are also known as {\it approximate subgroups} \cite{Tao08,Toi20}.

\begin{proposition}
    \label{prop:graph}
    A map $\phi:G\to H$ is a middle quasi-homomorphism if and only if its graph is a right translate of a left-quasi-subgroup of $G\times H$.
\end{proposition}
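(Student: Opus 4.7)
The plan is to prove both implications: the forward direction is an immediate consequence of Corollary~\ref{cor:unitalm} and Lemma~\ref{lem:quasi}, while the reverse direction reduces to a careful algebraic calculation that uses only the $\Lambda^{2}$ part of the definition of a left quasi-subgroup.

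For the forward direction, suppose $\phi$ is a middle quasi-homomorphism. By Proposition~\ref{prop:qhomperturb} and Corollary~\ref{cor:unitalm} the map $\psi(g):=\phi(g)\phi(1)^{-1}$ is a unital quasi-homomorphism, so its defect set $D:=D(\psi)$ is finite. Lemma~\ref{lem:quasi} gives $\psi(g)^{-1}\in\psi(g^{-1})D^{2}$ and $\psi(g_{1})\psi(g_{2})\in\psi(g_{1}g_{2})D^{-1}$, which translate directly into $\grph(\psi)^{-1}\subseteq\grph(\psi)\cdot(\{1\}\times D^{2})$ and $\grph(\psi)^{2}\subseteq\grph(\psi)\cdot(\{1\}\times D^{-1})$. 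Thus $\grph(\psi)$ is a left quasi-subgroup of $G\times H$, and $\grph(\phi)=\grph(\psi)\cdot(1,\phi(1))$ is a right translate of it.

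For the reverse direction, assume $\grph(\phi)=\Lambda(a,b)$ with $\Lambda$ a left quasi-subgroup and $F_{2}\subseteq G\times H$ finite with $\Lambda^{2}\subseteq\Lambda F_{2}$. Setting $\psi(g):=\phi(ga)b^{-1}$ yields $\grph(\psi)=\Lambda$, and invariance of the middle quasi-homomorphism property under horizontal shifts and right constant perturbations (Proposition~\ref{prop:qhomperturb} and the remark following it) reduces the problem to showing $M(\psi)$ is bounded. The key observation is that applying $\Lambda^{2}\subseteq\Lambda F_{2}$ to the pair $(g,g^{-1})$ gives $\psi(g)\psi(g^{-1})\in B$, where $B:=\{\psi(e^{-1})f\colon(e,f)\in F_{2}\}$ is a finite subset of $H$. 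This furnishes two quasi-inversion identities, namely $\psi(g)^{-1}=\psi(g^{-1})b^{-1}$ and $\psi(g)^{-1}=b'\psi(g^{-1})$, each with the extra factor lying in the finite set $B^{-1}$.

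Armed with quasi-inversion, I would expand $\psi(x)^{-1}\psi(xy)\psi(y)^{-1}$ in four steps: first, substitute $\psi(y)^{-1}=\psi(y^{-1})b_{y}^{-1}$ using the right-hand identity; next, apply $\Lambda^{2}$ to $(xy,y^{-1})$ to rewrite $\psi(xy)\psi(y^{-1})=\psi(xe_{1}^{-1})f_{1}$ for some $(e_{1},f_{1})\in F_{2}$; then substitute $\psi(x)^{-1}=b'_{x}\psi(x^{-1})$ using the left-hand identity; finally apply $\Lambda^{2}$ to $(x^{-1},xe_{1}^{-1})$ to collapse $\psi(x^{-1})\psi(xe_{1}^{-1})$ into $\psi(e_{1}^{-1}e_{2}^{-1})f_{2}$ for some $(e_{2},f_{2})\in F_{2}$. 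After these moves every factor in the resulting product $b'_{x}\cdot\psi(e_{1}^{-1}e_{2}^{-1})\cdot f_{2}f_{1}\cdot b_{y}^{-1}$ lives in a finite set determined by $F_{2}$ and by the restriction of $\psi$ to the fixed finite subset of $G$ consisting of products of two elements of the inverse of the $G$-projection of $F_{2}$, so $M(\psi)$ is bounded and $\psi$, hence $\phi$, is a middle quasi-homomorphism.

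The main obstacle is the bookkeeping in the reverse direction: one has to recognize that $\Lambda^{2}\subseteq\Lambda F_{2}$ alone already forces a quasi-inversion identity for graphs (so the $\Lambda^{-1}\subseteq\Lambda F_{1}$ hypothesis is in fact redundant for the purpose of this proposition), and then schedule the two applications of $\Lambda^{2}$ in exactly the correct order so that the $G$-shift introduced by the first application is precisely the one absorbed by the second, leaving only values of $\psi$ at products of two elements of a finite set.
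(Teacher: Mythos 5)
Your proof is correct, and in the reverse direction it takes a genuinely more self-contained route than the paper's. The paper proves both directions by invoking \cite[Lemma 2.37]{CHT24}, which identifies graphs of quasi-homomorphisms with left quasi-subgroups of $G\times H$; after the right-translation reduction it simply cites that lemma to conclude $\Phi(g)=\phi(ga)b^{-1}$ is a quasi-homomorphism and then expands $\phi(x)^{-1}\phi(xy)\phi(y)^{-1}$ in terms of $M(\Phi)$. You instead reprove the needed content of that lemma inline: in the forward direction you verify the two quasi-subgroup inclusions directly from Lemma \ref{lem:quasi}, and in the reverse direction you extract a quasi-inversion identity for $\psi$ from $\Lambda^2\subseteq\Lambda F_2$ applied to $(g,g^{-1})$ and then schedule two further applications of $\Lambda^2$ to bound $M(\psi)$ by a product of finite sets. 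This is longer but avoids the external citation and yields a small bonus: the $\Lambda^{-1}\subseteq\Lambda F_1$ hypothesis is never used in the reverse direction, so for graphs the product condition alone already forces the middle quasi-homomorphism property. Your bookkeeping (the two uses of $\Lambda^2$, the shift $e_1^{-1}$ introduced by the first being absorbed by the second, the reduction from $\phi$ to $\psi$ via Proposition \ref{prop:qhomperturb} and the horizontal-shift remark) all check out.
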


\begin{proof}
    Let $\phi\in\MQHom(G,H)$. Then the map $\phi^*:G\to H$ defined by $\phi^*(g)=\phi(g)\phi(1)^{-1}$, is a quasi-homomorphism. Therefore the graph $\grph (\phi^*)$ of $\phi^*$ is a left-quasi-subgroup of $G\times H$, see \cite[Lemma 2.37]{CHT24}. It is straightforward to see that $\grph(\phi)=\grph(\phi^*)\cdot (1,\phi(1))$, that is, $\grph(\phi)$ is a right translate of a left-quasi-subgroup of $G\times H$.

    Conversely, let $\grph(\phi)=\Lambda\cdot (a,b)$, where $\Lambda$ is a left-quasi-subgroup of $G\times H$, and $(a,b)\in G\times H$. Then 
    $\Lambda=\grph(\phi)\cdot (a^{-1},b^{-1})=\grph(\Phi)$, where $\Phi:G\to H$ is given by
    $\Phi(g)=\phi(ga)b^{-1}$ for all $g\in G$. By \cite[Lemma 2.37]{CHT24}, the map $\Phi$ is a quasi-homomorphism, and therefore also a middle quasi-homomorphism. As
    \begin{align*}
        \phi(x)^{-1}\phi(xy)\phi(y)^{-1} &= b^{-1}\cdot \Phi(xa^{-1})^{-1}\Phi(xya^{-1})\Phi(aya^{-1})^{-1}\\
        &\cdot \Phi(aya^{-1})\Phi(ya^{-1})^{-1}\Phi(a^{-1})\cdot \Phi(a^{-1})^{-1},
    \end{align*}
    it follows that $M(\phi)\subseteq (M(\Phi)\cdot M(\Phi)^{-1})^b\cdot \phi(1)^{-1}$. Hence $\phi$ is a middle quasi-homomorphism.
\end{proof}





\section{Perturbing a quasi-homomorphism by a constant}
\label{s:perturbing}

\noindent
In this section we consider perturbations of quasi-homomorphisms by constants. We keep the notation $\phi_c$ for the map defined by $\phi_c(x)=\phi(x)c$.

\begin{proposition}
    \label{prop:pertdelta}
    Let $\phi\in \QHom(G,H)$. Pick $c\in\Delta_\phi$. Then $\phi_c$ is a quasi-homomorphism.
\end{proposition}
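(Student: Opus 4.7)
The plan is to compute $D(\phi_c)$ directly and show it is bounded, with Lemma \ref{lem:boundconj} handling the only conjugation term that is not a priori under control. The key observation is that the condition $c\in\Delta_\phi$ is precisely what allows one to invoke that lemma.

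First I would expand, for arbitrary $x,y\in G$,
$$\phi_c(y)^{-1}\phi_c(x)^{-1}\phi_c(xy)=c^{-1}\phi(y)^{-1}c^{-1}\phi(x)^{-1}\phi(xy)c,$$
and then commute the inner $c^{-1}$ past $\phi(y)^{-1}$ via the identity $\phi(y)^{-1}c^{-1}=(c^{-1})^{\phi(y)}\phi(y)^{-1}$. This yields
$$\phi_c(y)^{-1}\phi_c(x)^{-1}\phi_c(xy)=c^{-1}\cdot (c^{-1})^{\phi(y)}\cdot s\cdot c,$$
where $s=\phi(y)^{-1}\phi(x)^{-1}\phi(xy)\in D(\phi)$.

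The three outer factors $c^{-1}$, $s$, $c$ are obviously uniformly bounded (the first and last are constant, and $D(\phi)$ is bounded because $\phi$ is a quasi-homomorphism). The only factor whose size depends on $y$ is $(c^{-1})^{\phi(y)}$. Here I would apply Lemma \ref{lem:boundconj} to the constant map $\psi:G\to\Delta_\phi$ given by $\psi(y)=c^{-1}$; this map is well-defined precisely because $c\in\Delta_\phi$, and it is bounded with $\lVert\psi\rVert=\lVert c\rVert$. The lemma then produces a uniform bound on $\lVert(c^{-1})^{\phi(y)}\rVert$ depending only on $\phi$ and $\lVert c\rVert$.

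Combining these bounds through the subadditivity of $\lVert\cdot\rVert$ shows that $D(\phi_c)$ is bounded, so $\phi_c\in\QHom(G,H)$. The only real content is the conjugation step, and the hypothesis $c\in\Delta_\phi$ is precisely what is needed to bring Lemma \ref{lem:boundconj} into play; without it, $(c^{-1})^{\phi(y)}$ can drift arbitrarily far as $y$ ranges over $G$ (which is exactly the reason a general constant perturbation of a quasi-homomorphism need only be a middle quasi-homomorphism, as in Proposition \ref{prop:qcmqhom}). So the whole proof is a clean two-line manipulation followed by a direct citation.
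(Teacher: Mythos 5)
Your proof is correct and follows the paper's argument essentially verbatim: both expand $\phi_c(y)^{-1}\phi_c(x)^{-1}\phi_c(xy)$ to isolate the term $(c^{-1})^{\phi(y)}$ alongside $s\in D(\phi)$, and both bound that term by applying Lemma~\ref{lem:boundconj} to the constant map $y\mapsto c^{-1}$, which lands in $\Delta_\phi$ precisely because $c\in\Delta_\phi$. The paper packages the expression as $(c^{-\phi(y)}s)^c$ rather than $c^{-1}(c^{-1})^{\phi(y)}sc$, but these are the same computation.
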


\begin{proof}
    Let $D=D(\phi)$ and pick $c\in\Delta_\phi$. Take $x,y\in G$. At first observe that
    \begin{align*}
        \phi_c(y)^{-1}\phi_c(x)^{-1}\phi_c(xy)
        &= \left ( \phi(y)^{-1}c^{-1}\phi(x)^{-1}\phi(xy)\right )^c\\
        &= \left ( c^{-\phi(y)}\phi(y)^{-1}\phi(x)^{-1}\phi(xy)\right )^c\\
        &= (c^{-\phi(y)}s)^c
    \end{align*}
    for $s=\phi(y)^{-1}\phi(x)^{-1}\phi(xy)\in D$. By Lemma \ref{lem:boundconj},
    there exists a bounded set $T\subseteq H$ such that  $c^{-\phi(y)}\in T$ for all $y\in G$. It follows that 
    $D(\phi_c)\subseteq (TD)^c$, hence $D(\phi_c)$ is bounded. This shows that $\phi_c\in\QHom(G,H)$, as required.
\end{proof}

Let $\Gamma$ be a group, and $A$ and $B$ subsets of $\Gamma$. We say \cite{Hru12,CHT24} that $A$ and $B$ are {\it left-commensurable} if there exist finite subsets $F_1$ and $F_2$ of $\Gamma$ such that $A\subseteq BF_1$ and $B\subseteq AF_2$. Right-commensurability is defined in an analogous way. The {\it left commensurator} of $A$ in $\Gamma$ is defined by $\LComm_\Gamma(A)=\{ g\in G\mid \exists F\subseteq \Gamma \hbox{ finite with } gAg^{-1}\subseteq AF\}$. The {\it right-commensurator} of $A$ in $\Gamma $ is defined by $\RComm_\Gamma(A)=(\LComm_\Gamma(A^{-1}))^{-1}$, and the {\it commensurator} of $A$ in $\Gamma$ is given by $\Comm_\Gamma(A)=\LComm_\Gamma(A)\cap \RComm_\Gamma(A)$.

\begin{proposition}
    \label{prop:qhomcharacterization}
    Let $\phi\in\MQHom(G,H)$. Then 
    The sets $D(\phi)$ and $(\phi(1)^{-1})^{\phi(G)}$ are left-commensurable.
    In particular, a middle quasi-homomorphism $\phi$ is a quasi-homomorphism if and only if the set $(\phi(1)^{-1})^{\phi(G)}$ is bounded.
\end{proposition}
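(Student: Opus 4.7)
The plan is to leverage Corollary \ref{cor:mq2q}, which tells us that $\phi^* := \phi_{\phi(1)^{-1}}$, given by $\phi^*(g) = \phi(g)\epsilon^{-1}$ where $\epsilon = \phi(1)$, is a quasi-homomorphism with a bounded (hence finite) defect set $D(\phi^*)$. Note $\phi^*(1) = 1$. Setting $x = 1$ in the defining expression of $D(\phi)$ yields
$$\phi(y)^{-1}\phi(1)^{-1}\phi(y) = (\phi(1)^{-1})^{\phi(y)} \in D(\phi)$$
for every $y \in G$, which establishes the easy inclusion $(\phi(1)^{-1})^{\phi(G)} \subseteq D(\phi)$. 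So for one side of the commensurability, one may take the finite perturbation set to be $\{1\}$.

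For the reverse inclusion, I plan to expand a generic element of $D(\phi)$ using $\phi(g) = \phi^*(g)\epsilon$ and then reorganize it. The computation is
$$\phi(y)^{-1}\phi(x)^{-1}\phi(xy) = \epsilon^{-1}\phi^*(y)^{-1}\epsilon^{-1}\phi^*(x)^{-1}\phi^*(xy)\epsilon,$$
and inserting $\phi^*(y)\phi^*(y)^{-1}$ between the two $\epsilon^{-1}$ factors separates off the conjugate $(\epsilon^{-1})^{\phi^*(y)}$ times a defect element $s := \phi^*(y)^{-1}\phi^*(x)^{-1}\phi^*(xy) \in D(\phi^*)$. I will then use the straightforward identity $(\epsilon^{-1})^{\phi^*(y)} = \epsilon \cdot (\epsilon^{-1})^{\phi(y)} \cdot \epsilon^{-1}$ (which comes from $\phi^*(y) = \phi(y)\epsilon^{-1}$) to rewrite this in terms of conjugates of $\epsilon^{-1}$ by elements of $\phi(G)$. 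The leading $\epsilon^{-1}$ and trailing $\epsilon$ collapse with the $\epsilon$-factors of this identity, giving
$$\phi(y)^{-1}\phi(x)^{-1}\phi(xy) = (\phi(1)^{-1})^{\phi(y)} \cdot \epsilon^{-1} s \epsilon,$$
with $\epsilon^{-1} s \epsilon$ ranging over the finite set $F := \epsilon^{-1} D(\phi^*) \epsilon$. This yields $D(\phi) \subseteq (\phi(1)^{-1})^{\phi(G)} \cdot F$, completing the left-commensurability.

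For the ``in particular'' statement, I recall that left-commensurable subsets of $H$ are simultaneously bounded or unbounded (since $F$ is finite, hence contained in a ball of finite radius, and the metric is proper and left-invariant). Since $\phi$ is a quasi-homomorphism precisely when $D(\phi)$ is bounded, this is equivalent to $(\phi(1)^{-1})^{\phi(G)}$ being bounded. The only delicate step is the bookkeeping of $\epsilon$-factors in the rearrangement; no genuine obstacle is anticipated.
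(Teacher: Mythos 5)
Your proof is correct. The easy inclusion $(\phi(1)^{-1})^{\phi(G)}\subseteq D(\phi)$ from setting $x=1$ is valid and cleaner than what the paper does; the reverse inclusion via the auxiliary quasi-homomorphism $\phi^*=\phi_{\phi(1)^{-1}}$ and the identity $\phi(y)^{-1}\phi(x)^{-1}\phi(xy)=(\phi(1)^{-1})^{\phi(y)}\cdot\epsilon^{-1}s\epsilon$ with $s\in D(\phi^*)$ also checks out, as does the passage from left-commensurability to the biconditional.

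Where you and the paper diverge: the paper never invokes Corollary \ref{cor:mq2q} or the auxiliary map $\phi^*$. It instead writes the single identity
\[
\phi(y)^{-1}\phi(x)^{-1}\phi(xy)=(\phi(1)^{-1})^{\phi(y)}\cdot\bigl(\phi(y)^{-1}\phi(1)\phi(y^{-1})^{-1}\bigr)\cdot\bigl(\phi(y^{-1})\phi(x)^{-1}\phi(xy)\bigr),
\]
recognizes the middle factor as an element of $M(\phi)$ and the last factor as an element of $M(\phi)^{-1}$, and reads off both inclusions $D\subseteq(\phi(1)^{-1})^{\phi(G)}MM^{-1}$ and $(\phi(1)^{-1})^{\phi(G)}\subseteq DMM^{-1}$ from the same equation. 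That argument is self-contained and symmetric, whereas yours borrows structure already built up in Section \ref{s:middle} (Proposition \ref{prop:qhomperturb} and Corollary \ref{cor:unitalm} feed into Corollary \ref{cor:mq2q}). Each has a small advantage: yours makes one inclusion completely trivial and makes explicit that the finite ``error'' set is a conjugate of $D(\phi^*)$; the paper's is more economical in its prerequisites and keeps everything in terms of $M(\phi)$. Neither is substantially deeper than the other.
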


\begin{proof}
    Let $M=M(\phi)$ and $D=D(\phi)$.
    We have
    \begin{align*}
        \phi(y)^{-1}\phi(x)^{-1}\phi(xy)&=(\phi(1)^{-1})^{\phi(y)}\\
        &\cdot \phi(y)^{-1}\phi(1)\phi(y^{-1})^{-1}\\
        &\cdot \phi(y^{-1})\phi(x)^{-1}\phi(xy),
    \end{align*}
 hence we conclude that $D\subseteq (\phi(1)^{-1})^{\phi(G)}MM^{-1}$ and
 $(\phi(1)^{-1})^{\phi(G)}\subseteq DMM^{-1}$. This concludes the proof.
\end{proof}

\begin{proposition}
    \label{prop:centr}
    Let $\phi\in\MQHom(G,H)$. Suppose there exists $a\in G$ with $\phi(a)\in C_H(\phi(G))$. Then $\phi$ is a quasi-homomorphism if and only if the restriction of $\phi$ to the conjugacy class of $a$ in $G$ is bounded.
\end{proposition}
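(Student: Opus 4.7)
The plan is to reduce the statement to Proposition \ref{prop:qhomcharacterization}, which says that a middle quasi-homomorphism $\phi$ is a quasi-homomorphism if and only if the conjugation orbit $(\phi(1)^{-1})^{\phi(G)}$ is bounded. So it suffices to show that this orbit is bounded if and only if the restriction of $\phi$ to the conjugacy class $\{gag^{-1}:g\in G\}$ is bounded.

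Write $M=M(\phi)$. The core step is to establish, for every $g\in G$, an identity of the shape
$$\phi(gag^{-1})=\phi(a)\cdot (\phi(1)^{-1})^{\phi(g)^{-1}}\cdot b_g,$$
where $b_g$ lies in a bounded subset of $H$ that does not depend on $g$. I would do this by applying Proposition \ref{prop:x-1xy} twice to factor $\phi(gag^{-1})$ (first as $g\cdot(ag^{-1})$, then $ag^{-1}=a\cdot g^{-1}$), approximating it by $\phi(g)\phi(a)^{\phi(1)}\phi(g^{-1})^{\phi(1)^2}$ up to a bounded error built from $M^2M^{-1}$. The hypothesis $\phi(a)\in C_H(\phi(G))$ gives $\phi(a)^{\phi(1)}=\phi(a)$ and $\phi(g)\phi(a)=\phi(a)\phi(g)$, so $\phi(a)$ can be migrated to the front. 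Finally, using $\phi(g)^{-1}\sim_{M^2}\phi(g^{-1})^{\phi(1)}$ to rewrite $\phi(g^{-1})^{\phi(1)^2}$ as $\phi(1)^{-1}\phi(g)^{-1}\phi(1)$ up to a bounded error makes the product $\phi(g)\phi(1)^{-1}\phi(g)^{-1}=(\phi(1)^{-1})^{\phi(g)^{-1}}$ visible, and all residual errors can be collected into the single factor $b_g$.

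From the displayed identity it is immediate that $\{\phi(gag^{-1}):g\in G\}$ is bounded if and only if $\{(\phi(1)^{-1})^{\phi(g)^{-1}}:g\in G\}$ is bounded. To bridge the last gap to Proposition \ref{prop:qhomcharacterization}, I would invoke $\phi(g)^{-1}\sim_{M^2}\phi(g^{-1})^{\phi(1)}$ once more, which implies that $(\phi(1)^{-1})^{\phi(g)^{-1}}$ differs from $((\phi(1)^{-1})^{\phi(g^{-1})})^{\phi(1)}$ only by a uniformly bounded conjugation. Reparametrising $g\mapsto g^{-1}$ identifies the set $\{(\phi(1)^{-1})^{\phi(g^{-1})}:g\in G\}$ with $(\phi(1)^{-1})^{\phi(G)}$, so boundedness transfers and Proposition \ref{prop:qhomcharacterization} completes the argument.

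The main obstacle is the bookkeeping in the first step: each invocation of Proposition \ref{prop:x-1xy} introduces a bounded factor, and in a non-abelian target every rearrangement must be justified either by the centrality of $\phi(a)$ or by pushing a bounded element through a conjugation by a fixed element ($\phi(1)$) and verifying that the outcome still lies in a fixed bounded set. The centrality hypothesis $\phi(a)\in C_H(\phi(G))$ is essential precisely at the points where $\phi(a)$ must slide past $\phi(g)$; without it the conjugation by $\phi(g)$ would survive and obstruct the very reduction we wish to perform.
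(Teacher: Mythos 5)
Your argument is correct and follows the same overall strategy as the paper's: reduce to Proposition \ref{prop:qhomcharacterization} by showing that the orbit $(\phi(1)^{-1})^{\phi(G)}$ and the image of the conjugacy class of $a$ under $\phi$ are simultaneously bounded. Where you build this commensurability by three successive applications of Proposition \ref{prop:x-1xy} (two to factor $\phi(gag^{-1})$ and one more to reparametrise $(\phi(1)^{-1})^{\phi(g)^{-1}}$ back to $(\phi(1)^{-1})^{\phi(G)}$), the paper exhibits it in one stroke via the telescoping identity
\begin{align*}
    (\phi(1)^{-1})^{\phi(x)} &=\phi(x)^{-1}\phi(a)\phi(x^{-1}a)^{-1}\\
    &\cdot \phi(x^{-1}a)\phi(1)^{-1}\phi(a^{-1}x)\\
    &\cdot \phi(a^{-1}x)^{-1}\phi(x)\phi(x^{-1}ax)^{-1}\\
    &\cdot \phi(x^{-1}ax)\phi(a)^{-1},
\end{align*}
whose validity uses exactly the commutation of $\phi(a)$ with $\phi(1)$ and $\phi(x)$, and whose first three factors lie visibly in $M$, $M^{-1}$, $M$. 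Both routes are sound; the paper's one-line identity avoids the bookkeeping you rightly flag as the main obstacle, as well as the final $g\mapsto g^{-1}$ bridging step (which itself implicitly relies on the fact that, in a discrete $H$, conjugating a bounded set by elements of a bounded set yields a bounded set --- a fact you should make explicit).
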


\begin{proof}
    Let $M=M(\phi)$. We have
    \begin{align*}
        (\phi(1)^{-1})^{\phi(x)} &=\phi(x)^{-1}\phi(a)\phi(x^{-1}a)^{-1}\\
        &\cdot
        \phi(x^{-1}a)\phi(1)^{-1}\phi(a^{-1}x)\\
        &\cdot
        \phi(a^{-1}x)^{-1}\phi(x)\phi(x^{-1}ax)^{-1}\\
        &\cdot \phi(x^{-1}ax)\phi(a)^{-1},
        \end{align*}
    hence the proof follows from Proposition \ref{prop:qhomcharacterization}.
\end{proof}

\medskip

\paragraph{\bf The group of perturbing constants.}
Let $\phi:G\to H$ be a quasi-homomorphism. Given $c\in H$, we are interested in when $\phi_c$ is again a quasi-homomorphism. By Proposition \ref{prop:qhomcharacterization}, this happens precisely when the function $x\mapsto (\phi_c(1)^{-1})^{\phi_c(y)}$ is bounded. As $\phi$ is a quasi-homomorphism, the boundedness condition is equivalent to $x\mapsto (c^{-1})^{\phi(x)}$ being a bounded map. Define
$$\Pi_\phi =\{ c\in H\mid x\mapsto (c^{-1})^{\phi(x)} \hbox{ is bounded}\}.$$
Then it is easy to see that $\Pi_\phi$ is a subgroup of $H$ that contains $\Delta_\phi$ by Proposition \ref{prop:pertdelta}. Note, for instance, that if $\phi$ is bounded, or if $H$ is abelian, then $\Pi_\phi=H$.

The following generalizes Lemma \ref{lem:delta}:

\begin{proposition}
    \label{prop:normPi}
    Let $\phi\in \QHom(G,H)$. Then $\phi(G)$ is contained in $N_H(\Pi_\phi)$.
\end{proposition}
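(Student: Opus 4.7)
The plan is to show that for each $g\in G$, one has $c\in\Pi_\phi$ if and only if $\phi(g)^{-1}c\,\phi(g)\in\Pi_\phi$; this is equivalent to $\phi(g)^{-1}\Pi_\phi\,\phi(g)=\Pi_\phi$, i.e.\ $\phi(g)\in N_H(\Pi_\phi)$. The guiding idea is that $\phi(g)\phi(x)$ is within bounded distance of $\phi(gx)$ by Lemma \ref{lem:quasi}, and that $x\mapsto gx$ is a bijection of $G$, so replacing $\phi(x)$ by $\phi(g)\phi(x)$ amounts to reindexing by $gx$ and then conjugating by a bounded element---an operation that preserves boundedness.

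Concretely, fix $g\in G$, $c\in H$, and set $c'=\phi(g)^{-1}c\,\phi(g)$. A direct rearrangement gives
\[
(c'^{-1})^{\phi(x)}=\phi(x)^{-1}\phi(g)^{-1}c^{-1}\phi(g)\phi(x)=\bigl(\phi(g)\phi(x)\bigr)^{-1}c^{-1}\bigl(\phi(g)\phi(x)\bigr).
\]
By Lemma \ref{lem:quasi}(1), with $D=D(\phi)$, there exists $s_x\in D$ such that $\phi(gx)=\phi(g)\phi(x)s_x$, hence $\phi(g)\phi(x)=\phi(gx)s_x^{-1}$. Substituting yields
\[
(c'^{-1})^{\phi(x)}=s_x\,\phi(gx)^{-1}c^{-1}\phi(gx)\,s_x^{-1}=s_x\,(c^{-1})^{\phi(gx)}\,s_x^{-1}.
\]

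Now use that $x\mapsto gx$ is a bijection of $G$, so $\{(c^{-1})^{\phi(gx)}:x\in G\}=\{(c^{-1})^{\phi(y)}:y\in G\}$. Since $s_x$ lives in the bounded set $D$ (so $s_x^{-1}$ lives in the bounded set $D^{-1}$), and conjugation by elements of a bounded set preserves boundedness (using left-invariance of $d$ and $\|h\|=\|h^{-1}\|$), the set $\{(c'^{-1})^{\phi(x)}:x\in G\}$ is bounded if and only if $\{(c^{-1})^{\phi(y)}:y\in G\}$ is bounded. By definition of $\Pi_\phi$, this reads $c'\in\Pi_\phi\iff c\in\Pi_\phi$, giving $\phi(g)^{-1}\Pi_\phi\,\phi(g)=\Pi_\phi$ and therefore $\phi(g)\in N_H(\Pi_\phi)$.

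There is no real obstacle here; the only point that requires a moment of care is the elementary fact that conjugation by elements of a bounded subset of $H$ sends bounded sets to bounded sets, which follows directly from left-invariance of the metric. Once this is noted, the argument is a one-line computation combined with the bijectivity of left translation by $g$ on $G$.
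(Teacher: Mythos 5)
Your proof is correct, and it is genuinely more elementary than the paper's. The paper establishes normalization by proving two separate inclusions, $c^{\phi(g)}\in\Pi_\phi$ and $c^{\phi(g)^{-1}}\in\Pi_\phi$ for $c\in\Pi_\phi$; the first is routine, but the second forces the appearance of a term of the form $s^{\phi(x)}$ with $s$ fixed and $\phi(x)$ unbounded, and the paper then invokes Lemma~\ref{lem:boundconj}, which in turn rests on the structural fact (Lemma~\ref{lem:delta}) that $\phi(G)$ lies in a bounded neighborhood of $C_H(\Delta_\phi)$. Your single identity $(c'^{-1})^{\phi(x)}=s_x\,(c^{-1})^{\phi(gx)}\,s_x^{-1}$ with $s_x\in D$ is symmetric enough to be read in both directions at once: it directly gives the equivalence $c\in\Pi_\phi\Leftrightarrow\phi(g)^{-1}c\,\phi(g)\in\Pi_\phi$, hence $\phi(g)^{-1}\Pi_\phi\phi(g)=\Pi_\phi$, without ever confronting $\phi(g)^{-1}$ or calling on Lemma~\ref{lem:boundconj}. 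You thus use only Lemma~\ref{lem:quasi}(1), the bijectivity of left translation, and the elementary fact that conjugating a bounded set by elements of a fixed bounded set stays bounded. This makes the argument shorter and self-contained; what the paper's version buys in exchange is essentially nothing here, though its reliance on Lemma~\ref{lem:boundconj} does keep this proof stylistically parallel to nearby results (e.g.\ Propositions~\ref{prop:pertdelta} and~\ref{prop:qcmqhom}) that genuinely need that lemma.
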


\begin{proof}
    Denote $D=D(\phi)$.
    Let $c\in\Pi_\phi$ and $g\in G$. For arbitrary $x\in G$ we have that
    $$((c^{\phi(g)})^{-1})^{\phi(x)}=(c^{-1})^{\phi(g)\phi(x)}=(c^{-1})^{\phi(gx)t}$$
    for some $t\in D^{-1}$. This proves that $c^{\phi(g)}\in\Pi_\phi$. On the other hand, we also have that 
    $$((c^{\phi(g)^{-1}})^{-1})^{\phi(x)}=(c^{-1})^{\phi(g)^{-1}\phi(x)}=(c^{-1})^{\phi(g^{-1})s\phi(x)}=(c^{-1})^{\phi(g^{-1}x)us^{\phi(x)}}$$
    for some $s\in D$ and $u\in D^{-1}$. By Lemma \ref{lem:boundconj}, we conclude that  $c^{\phi(g)^{-1}}\in\Pi_\phi$.
\end{proof}

\begin{proposition}
    \label{prop:imageinPi}
    Let $\phi\in\QHom(G,H)$. Then $\phi(b)\in\Pi_\phi$ if and only the restriction of $\phi$ to the conjugacy class of $b^{-1}$ in $G$ is bounded.
\end{proposition}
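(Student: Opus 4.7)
The plan is to exhibit a factorization
$$\phi(x)^{-1}\phi(b)^{-1}\phi(x) = A_x\,\phi(x^{-1}b^{-1}x)\,B_x$$
valid for every $x \in G$, with $A_x, B_x$ staying in bounded subsets of $H$ independent of $x$. Once this is in hand, the inequality $\lVert uvw\rVert\le \lVert u\rVert+\lVert v\rVert+\lVert w\rVert$ (obtained by iterating the triangle inequality and left-invariance) forces $\lVert\phi(x)^{-1}\phi(b)^{-1}\phi(x)\rVert$ and $\lVert\phi(x^{-1}b^{-1}x)\rVert$ to differ by a uniform additive constant. Thus the set $\{(\phi(b)^{-1})^{\phi(x)}:x\in G\}$ is bounded---i.e.\ $\phi(b)\in\Pi_\phi$---if and only if $\{\phi(x^{-1}b^{-1}x):x\in G\}$ is bounded, which is precisely the statement that the restriction of $\phi$ to the conjugacy class of $b^{-1}$ is bounded.

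To produce the factorization, I apply the standard defect calculus in three moves. First, by Lemma \ref{lem:quasi}(2) write $\phi(x)^{-1}=\phi(x^{-1})\,t_x$ with $t_x\in D^2$, and $\phi(b)^{-1}=\phi(b^{-1})\,s$ with a single element $s\in D^2$ depending only on $b$. Sliding $t_x$ past the fixed element $\phi(b^{-1})$ yields
$$\phi(x)^{-1}\phi(b)^{-1}\phi(x)=\phi(x^{-1})\phi(b^{-1})\,u_x\,\phi(x),\qquad u_x:=t_x^{\phi(b^{-1})}s,$$
and $u_x$ lies in the bounded subset $(D^2)^{\phi(b^{-1})}D^2\subseteq\Delta_\phi$ uniformly in $x$; the containment in $\Delta_\phi$ uses that $\phi(G)$ normalizes $\Delta_\phi$ by Lemma \ref{lem:delta}(1).

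Second, shift $u_x$ to the right of $\phi(x)$ using $u_x\phi(x)=\phi(x)\,u_x^{\phi(x)}$. Since $x\mapsto u_x$ is a uniformly bounded map into $\Delta_\phi$, Lemma \ref{lem:boundconj} keeps $u_x^{\phi(x)}$ in a bounded set independently of $x$; set $B_x:=u_x^{\phi(x)}$. Third, exploit the fact that the right defect set $D^*=D^*(\phi)$ of the quasi-homomorphism $\phi$ is bounded to write $\phi(x^{-1})\phi(b^{-1})=d_1^*\,\phi(x^{-1}b^{-1})$ and $\phi(x^{-1}b^{-1})\phi(x)=d_2^*\,\phi(x^{-1}b^{-1}x)$ with $d_1^*,d_2^*\in D^*$, and set $A_x:=d_1^*d_2^*\in D^*D^*$. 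Concatenating the three moves produces the factorization.

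The only point requiring care is bookkeeping: distinguishing which bounded quantities depend on $x$ (namely $t_x$, $u_x$, and the $d_i^*$) from those that are absolute constants built from $b$ (namely $s$ and $\phi(b^{-1})$), and invoking Lemma \ref{lem:boundconj} at precisely the step where the $x$-dependent element $u_x$ is conjugated by the $x$-dependent element $\phi(x)$. Beyond this, the argument is a mechanical application of the defect calculus, structurally parallel to the telescoping identity used in Proposition \ref{prop:centr}.
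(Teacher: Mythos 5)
Your proof is correct, but the route differs from the paper's. The paper establishes the relation between $(\phi(b)^{-1})^{\phi(x)}$ and $\phi(x^{-1}b^{-1}x)$ by a single telescoping identity
$$(\phi(b)^{-1})^{\phi(x)}=\bigl[\phi(x)^{-1}\phi(b)^{-1}\phi(bx)\bigr]\cdot\bigl[\phi(bx)^{-1}\phi(x)\phi(x^{-1}b^{-1}x)^{-1}\bigr]\cdot\phi(x^{-1}b^{-1}x),$$
whose two bracketed factors land in $D(\phi)$ and $M(\phi)$ respectively (the middle one uses the algebraic identity $bx\cdot(x^{-1}b^{-1}x)=x$). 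Both sets are bounded because $\phi$ is a quasi-homomorphism, so $(\phi(b)^{-1})^{\phi(G)}$ and $\phi(\Conj_G(b^{-1}))$ are commensurable on one side with no appeal to conjugation estimates at all. Your argument reaches the same target $\phi(x^{-1}b^{-1}x)$ but does so by a three-stage defect calculus: convert inverses via Lemma~\ref{lem:quasi}(2), slide past a fixed element by conjugation, commute past $\phi(x)$ via Lemma~\ref{lem:boundconj}, and finally fold in the right defect set $D^*$. This yields the two-sided factorization $A_x\,\phi(x^{-1}b^{-1}x)\,B_x$, which is just as good for the final boundedness equivalence but costs more moving parts; in particular it invokes Lemma~\ref{lem:boundconj}, which the paper's proof avoids entirely. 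Your approach is more mechanical and illustrates that the result is provable by routine defect calculus without spotting the ad hoc telescope; the paper's is shorter and keeps the commensurability one-sided. Both are sound, and your bookkeeping of which quantities depend on $x$ versus $b$ is handled correctly.
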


\begin{proof}
    Let $D=D(\phi)$, $D^*=D^*(\phi)$ and $M=M(\phi)$. As $\phi$ is a quasi-homomorphism, these three sets are bounded. We have
    \begin{align*}
        (\phi (b)^{-1})^{\phi(x)} &= 
        \phi(x)^{-1}\phi(b)^{-1}\phi(bx)\\
        &\cdot 
        \phi(bx)^{-1}\phi(x)\phi(x^{-1}b^{-1}x)^{-1}\\
        &\cdot
        \phi(x^{-1}b^{-1}x).
    \end{align*}
    Note that $\phi(x)^{-1}\phi(b)^{-1}\phi(bx)\in D^*$ and $\phi(bx)^{-1}\phi(x)\phi(x^{-1}b^{-1}x)^{-1}\in M$.
    We conclude that the sets $(\phi(b)^{-1})^{\phi(G)}$ and $\phi(\Conj_G(b^{-1}))$ are right commensurable.
    Thus the set $(\phi(b)^{-1})^{\phi(G)}$ is bounded if and only if the restriction of $\phi$ to the conjugacy class of $b^{-1}$ is bounded. This proves the result.
\end{proof}

\begin{corollary}
    \label{cor:fin-by-ab}
    Let $\phi\in \QHom(G,H)$. Suppose that $G$ is finite-by-abelian. Then $\phi(G)\subseteq \Pi_\phi$.
\end{corollary}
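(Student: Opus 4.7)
The plan is to deduce this directly from Proposition \ref{prop:imageinPi}, which reduces the question of whether $\phi(b)\in\Pi_\phi$ to whether the restriction of $\phi$ to the conjugacy class $\Conj_G(b^{-1})$ is bounded. Since the image of a finite set under any map to $H$ is automatically bounded, it suffices to show that every conjugacy class in $G$ is finite.

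I would establish the latter by unpacking the finite-by-abelian hypothesis: pick a finite normal subgroup $N\trianglelefteq G$ with $G/N$ abelian. Then every commutator of $G$ maps to the identity in $G/N$, so the derived subgroup $[G,G]$ is contained in $N$ and is therefore finite. For arbitrary $a,g\in G$, the identity $g^{-1}ag = a\cdot a^{-1}g^{-1}ag = a[a,g]$ (using the convention $[a,g]=a^{-1}a^g$ from the notation paragraph) places every $G$-conjugate of $a$ inside the coset $a[G,G]$, so $\Conj_G(a)$ is finite of size at most $|[G,G]|$.

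Combining these two steps, for any $b\in G$ the set $\phi(\Conj_G(b^{-1}))$ is a finite, hence bounded, subset of $H$, and Proposition \ref{prop:imageinPi} gives $\phi(b)\in\Pi_\phi$. Since $b\in G$ is arbitrary, $\phi(G)\subseteq\Pi_\phi$. The proof has no real obstacle beyond correctly invoking Proposition \ref{prop:imageinPi}; the finite-by-abelian hypothesis enters only through the elementary observation that it forces $[G,G]$ to be finite, which in turn forces all conjugacy classes of $G$ to be finite.
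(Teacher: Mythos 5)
Your proof is correct and follows essentially the same route as the paper's: both reduce the claim via Proposition \ref{prop:imageinPi} to showing that conjugacy classes in $G$ are finite, and both derive this from the observation that a finite-by-abelian group has finite derived subgroup. You simply spell out the elementary calculation $a^g = a[a,g]\in a[G,G]$ that the paper compresses into the phrase ``$G$ is a BFC-group.''
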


\begin{proof}
    If $G$ is finite-by-abelian, then $G'$ is finite. Hence $G$ is a BFC-group, i.e., the cardinalities of conjugacy classes in G are bounded. So the result follows from Proposition \ref{prop:imageinPi}.
\end{proof}

The next result completely describes the group of perturbing constants in the case when the target group is torsion-free hyperbolic:

\begin{proposition}
    \label{prop:hyperbolic}
    Let $H$ be a torsion-free hyperbolic group and $\phi\in\QHom(G,H)$ be unbounded. Suppose that $\Pi_\phi\neq 1$. Then $\phi$ maps $G$ into a cyclic subgroup $C$ of $H$ and $\Pi_\phi=C$.
\end{proposition}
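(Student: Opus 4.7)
My approach is to split on whether the defect subgroup $\Delta_\phi$ is trivial, show in each case that $\phi(G)$ is contained in a maximal cyclic subgroup $C = \langle w\rangle$ of $H$, and then identify $\Pi_\phi$ with $C$ by a coset-counting argument. The crux, which I expect to be the main obstacle, is a standard algebraic input about torsion-free hyperbolic groups: for every nontrivial $v\in H$, one has $N_H(\langle v\rangle) = C_H(v)$. This follows because $gvg^{-1}=v^{-1}$ implies, by a short manipulation, $(gv)^2 = g^2$, and the uniqueness of square roots in a torsion-free hyperbolic group (a consequence of cyclicity of centralizers of nontrivial elements) then forces $v = 1$, a contradiction.

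Suppose first that $\Delta_\phi \neq 1$. Picking a nontrivial $u\in\Delta_\phi$, one has $C_H(\Delta_\phi)\subseteq C_H(u)$ infinite cyclic; it is nontrivial because $\phi(G)\subseteq\mathcal{N}_R(C_H(\Delta_\phi))$ is unbounded, so write $C_H(\Delta_\phi) = \langle v\rangle$ with $v\neq 1$. By Lemma~\ref{lem:delta}(1), $\phi(G)\subseteq N_H(\Delta_\phi)$, and since conjugation preserving $\Delta_\phi$ also preserves $C_H(\Delta_\phi)$, this gives $\phi(G) \subseteq N_H(\langle v\rangle) = C_H(v)$, a cyclic subgroup. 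Suppose instead $\Delta_\phi = 1$, so that $\phi$ is a homomorphism. The hypothesis $\Pi_\phi\neq 1$ provides a nontrivial $c\in H$ with $\{c^{\phi(x)}\}$ finite, so $\phi(G)\cap C_H(c)$ has finite index in $\phi(G)$. Since $C_H(c)$ is infinite cyclic, $\phi(G)$ is virtually cyclic, hence cyclic (every torsion-free virtually $\mathbb{Z}$ group is $\mathbb{Z}$, as the infinite-dihedral alternative would introduce torsion).

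In either case let $C = \langle w\rangle$ be the unique maximal cyclic subgroup of $H$ containing $\phi(G)$. The inclusion $C\subseteq \Pi_\phi$ is immediate because $\phi(G)\subseteq C$ is abelian, so $c^{\phi(x)} = c$ for every $c\in C$. For the reverse inclusion take $c\in\Pi_\phi$ with $c\neq 1$; finiteness of $\{c^{\phi(x)}\}$ means $\phi(G)$ is covered by finitely many cosets of $C_H(c)$, and unboundedness of $\phi$ together with properness of $d$ forces $\phi(G)$ to be infinite in $\langle w\rangle$, so pigeonhole produces distinct $\phi(x),\phi(x')$ in one common coset. Then $w^k:=\phi(x)\phi(x')^{-1}$ is a nontrivial element of $C_H(c)$, and maximality of $\langle w\rangle$ gives $C_H(w^k) = \langle w\rangle$, whence $c\in \langle w\rangle = C$. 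This proves $\Pi_\phi = C$.
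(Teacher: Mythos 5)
Your proof is correct, but it takes a genuinely different route from the paper. The paper invokes the rigidity theorem \cite[Theorem 4.4]{FK16} as a black box: for each nontrivial $c\in\Pi_\phi$, the pair $\phi$ and $\phi_c$ are unbounded quasi-homomorphisms at finite positive distance, so they map into a common cyclic $C_c$; one then checks $c\in C_c$ and that the union of all such $C_c$ is again cyclic (via the common infinite-order element of $\phi(G)$). Your argument instead bypasses the rigidity theorem entirely and works directly from Lemma~\ref{lem:delta} plus two standard facts about torsion-free hyperbolic groups — cyclicity of centralizers of nontrivial elements, and uniqueness of roots (hence $N_H(\langle v\rangle)=C_H(v)$). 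The cost of doing without rigidity is that you must split on whether $\Delta_\phi$ is trivial: when $\Delta_\phi\neq 1$ you get $\phi(G)\subseteq N_H(\Delta_\phi)\subseteq N_H(C_H(\Delta_\phi))=N_H(\langle v\rangle)=C_H(v)$ directly, but when $\Delta_\phi=1$ (so $\phi$ is a homomorphism) Lemma~\ref{lem:delta}(2)--(3) give no information and you need the separate virtually-cyclic-implies-cyclic argument, which correctly exploits the hypothesis $\Pi_\phi\neq 1$ via orbit-stabilizer on the $\phi(G)$-conjugacy class of $c$. The concluding identification $\Pi_\phi=C$ by pigeonhole in cosets of $C_H(c)$ and maximality of $\langle w\rangle$ is clean and matches the paper's conclusion. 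Net effect: your proof is longer but more self-contained and elementary, replacing a deep input (boundary rigidity) with algebraic structure theory of hyperbolic groups; the paper's proof is a two-line corollary of a theorem it has already set up for other purposes.
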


\begin{proof}
    Let $c\in H\setminus\{ 1\}$ and let $\phi_c:G\to H$ be defined by $\phi_c(g)=\phi(g)c$. Suppose $c\in\Pi_\phi\setminus\{ 1\}$. Then $\phi_c$ is an unbounded quasi-homomorphism, and $0<d(\phi,\phi_c)<\infty$. By \cite[Theorem 4.4]{FK16}, the maps $\phi$ and $\phi_c$ map into the same cyclic subgroup $C_c$ of $H$. This implies that $\phi(G)\subseteq C_c\cup C_c\cdot c^{-1}$, the latter being non-empty if and only if $c\in C_c$. We conclude that $\Pi_\phi$ is contained in the union $C$ of the collection $\mathcal{C}$ of all cyclic subgroups that contain $\phi(G)$. As $\phi$ is unbounded, the intersection of the collection $\mathcal{C}$ contains an element $k$ of infinite order. if $C_i=\langle c_i\rangle$ is an arbitrary member of $\mathcal{C}$, then $c_i\in C_H(k)$, which is cyclic. This shows that $C$ is cyclic. Conversely, every element of $C$ is obviously contained in $\Pi_\phi$, hence the result.
\end{proof}

For example, if $F$ is a non-abelian free group and $\phi:G\to F$ is an unbounded quasi-homomorphism whose image is non-commutative, then any non-trivial constant perturbation of $\phi$ yields an example of a middle quasi-homomorphism that is not a quasi-homomorphism.

\medskip
\paragraph{\bf Constant perturbations of graphs.}
Let $\phi:G\to H$ be a quasi-homomorphism. Then \cite[Lemma 2.37]{CHT24} we have that $\grph(\phi)$ is a left-quasi-subgroup of $G\times H$. Define
$$\Pi^*_\phi =\{a,b)\in G\times H\mid (\grph(\phi))\cdot (a,b) \hbox{ is a quasi-left-subgroup of } G\times H\}.$$

\begin{proposition}
    \label{prop:pistar}
    Let $\phi\in \QHom(G,H)$. Let $(a,b)\in G\times H$. Then $(a,b)\in\Pi^*_\phi$ if and only if the set 
    $$S_{(a,b)}(\phi) =\{ \phi(a^{-z})b^{\phi(z)}\mid z\in G\}$$
    is bounded.
\end{proposition}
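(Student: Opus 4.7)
The plan is to recognize the right-translate $\grph(\phi)\cdot(a,b)=\{(xa,\phi(x)b):x\in G\}$ as the graph of the map $\psi:G\to H$ defined by $\psi(y):=\phi(ya^{-1})b$, via the reindexing $y=xa$. By \cite[Lemma 2.37]{CHT24}, $\grph(\psi)$ is a left-quasi-subgroup of $G\times H$ if and only if $\psi\in\QHom(G,H)$, so the task reduces to characterizing when $\psi$ is a quasi-homomorphism.

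Since $\psi$ is a constant perturbation of the horizontal shift $y\mapsto\phi(ya^{-1})$ of $\phi$, and both operations preserve the class of middle quasi-homomorphisms (Proposition~\ref{prop:qhomperturb} combined with the horizontal-shift remark following it), $\psi\in\MQHom(G,H)$ automatically. Applying Proposition~\ref{prop:qhomcharacterization} to $\psi$, the condition $\psi\in\QHom(G,H)$ becomes boundedness of the orbit $T(\psi):=\{(\psi(1)^{-1})^{\psi(z)}:z\in G\}$. My strategy is then to show that both $T(\psi)$ and $S_{(a,b)}(\phi)$ are bounded if and only if the element $c:=\phi(a)^{-1}b$ lies in $\Pi_\phi$, i.e.\ iff the orbit $\{c^{\phi(z)}:z\in G\}$ is bounded.

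For $S_{(a,b)}(\phi)$ the reduction is direct: iterated use of Lemma~\ref{lem:quasi} gives $\phi(z^{-1}a^{-1}z)\sim \phi(z)^{-1}\phi(a)^{-1}\phi(z)$ with error depending only on $D(\phi)$, whence
\[\phi(a^{-z})\,b^{\phi(z)}\sim \phi(z)^{-1}\phi(a)^{-1}b\,\phi(z)=c^{\phi(z)}.\]
For $T(\psi)$, Lemma~\ref{lem:quasi} yields a uniform decomposition $\psi(z)=\phi(z)\phi(a)^{-1}b\cdot e_z$ with $\|e_z\|$ bounded in $z$, while $\psi(1)^{-1}=b^{-1}\phi(a)\cdot\epsilon'$ for a fixed element $\epsilon'\in\Delta_\phi$ (using $\phi(a)\in N_H(\Delta_\phi)$ from Lemma~\ref{lem:delta}(1)). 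Substituting these into $(\psi(1)^{-1})^{\psi(z)}$ produces $((c^{-1})^{\phi(z)})^{\phi(a)^{-1}b}$ multiplied on the left and right by the uniformly bounded $e_z^{\pm 1}$ and by conjugates of $(\epsilon')^{\phi(z)}$; this last orbit is bounded by Lemma~\ref{lem:boundconj} applied to the constant map $y\mapsto\epsilon'$. After absorbing the fixed conjugation by $\phi(a)^{-1}b$, the surviving orbit is again $\{c^{\phi(z)}:z\in G\}$.

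The key technical point is this substitution in the conjugator of $T(\psi)$: a naive replacement produces a conjugate of a single bounded element by $\phi(z)$, which is not automatically bounded, but Lemma~\ref{lem:boundconj} is designed precisely to control this because the element in question belongs to $\Delta_\phi$. Combining the two reductions yields $(a,b)\in\Pi^*_\phi$ if and only if $S_{(a,b)}(\phi)$ is bounded.
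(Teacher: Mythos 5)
Your argument is correct, but it follows a genuinely different route than the paper's. Both proofs start the same way: identify $\grph(\phi)\cdot(a,b)$ with the graph of $\psi(y)=\phi(ya^{-1})b$ and reduce (via \cite[Lemma 2.37]{CHT24}) to the question of whether $\psi$ is a quasi-homomorphism. From there the paper computes the left defect $\Phi(y)^{-1}\Phi(x)^{-1}\Phi(xy)$ explicitly, factors it as $b^{-1}\bigl(b^{-\phi(ya^{-1})}\phi(a^{-ya^{-1}})^{-1}\bigr)(\cdots)b$ up to uniformly bounded error, and reads off left-commensurability of $D(\psi)^{b^{-1}}$ with $S_{(a,b)}(\phi)$. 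You instead invoke Proposition~\ref{prop:qhomcharacterization} (after noting $\psi\in\MQHom(G,H)$ is automatic) to replace the defect by the single conjugation orbit $\{(\psi(1)^{-1})^{\psi(z)}\}$, and then show that both this orbit and $S_{(a,b)}(\phi)$ are left-commensurable to $\{c^{\phi(z)}\}$ with $c=\phi(a)^{-1}b$. This is more modular and has the pleasant side effect of exposing the element $c$ whose $\Pi_\phi$-membership is exactly what the paper appeals to when deducing Theorem~\ref{thm:Piagain} from Corollary~\ref{prop:commgraph}; the paper's version is a more direct, self-contained computation.

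One small imprecision: you attribute the reduction $\phi(a^{-z})b^{\phi(z)}\sim c^{\phi(z)}$ to ``iterated use of Lemma~\ref{lem:quasi}'' with error ``depending only on $D(\phi)$,'' but the error terms include conjugates by $\phi(z)$ of defect elements (equivalently, middle-defect elements of $\phi$), so this step also leans on Lemma~\ref{lem:boundconj} (or on the already-established fact that a quasi-homomorphism is a middle quasi-homomorphism, Corollary~\ref{cor:qhommqhom}) just as the $T(\psi)$ computation does. You flag Lemma~\ref{lem:boundconj} as ``the key technical point'' for the $T(\psi)$ side, and the same remark applies on the $S_{(a,b)}$ side; once stated this way the argument is complete. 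A clean way to organize that reduction is to use the identity from the proof of Proposition~\ref{prop:imageinPi}, which gives $\phi(a^{-z})=m_z^{-1}d_z^{-1}\bigl(\phi(a)^{-1}\bigr)^{\phi(z)}$ with $m_z\in M(\phi)$, $d_z\in D^*(\phi)$, so that $\phi(a^{-z})b^{\phi(z)}=m_z^{-1}d_z^{-1}c^{\phi(z)}$ immediately.
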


\begin{proof}
    Denote $D=D(\phi)$.
    Note that $\grph(\phi)\cdot (a,b)=\grph (\Phi)$, where $\Phi:G\to H$ is given by $\Phi(g)=\phi(ga^{-1})b$. Thus $(a.b)\in \Pi^*_\phi$ if and only if $\Phi$ is a quasi-homomorphism. Now observe that
    \begin{align*}
        \Phi(y)^{-1}\Phi(x)^{-1}\Phi (xy) &=
        b^{-1}\cdot b^{-\phi(ya^{-1})}\\
        &\cdot \phi(ya^{-1})^{-1}\phi(xa^{-1})\phi(xa^{-1}ya^{-1})\\
        &\cdot \phi(xa^{-1}ya^{-1})^{-1}\phi(xya^{-1})\phi(a^{-ya^{-1}})\\
        &\cdot \phi(a^{-ya^{-1}})^{-1}\cdot b.
    \end{align*}
Write $$t=\phi(ya^{-1})^{-1}\phi(xa^{-1})\phi(xa^{-1}ya^{-1})\phi(xa^{-1}ya^{-1})^{-1}\phi(xya^{-1})\phi(a^{-ya^{-1}})$$
and note that $t\in (D^{-1})^2$. Then
$$(\Phi(y)^{-1}\Phi(x)^{-1}\Phi (xy))^{b^{-1}}= b^{-\phi(ya^{-1})}\phi(a^{-ya^{-1}})^{-1}\cdot t^{\phi(a^{-ya^{-1}})^{-1}}.$$
Using Lemma \ref{lem:delta} and renaming $z=ya^{-1}$, we readily see that
$S_{(a,b)}(\phi)$ and $D(\Phi)^{b^{-1}}$ are left commensurable sets. Hence
the map $\Phi$ is a quasi-homomorphism if and only if the set $S_{(a,b)}(\phi)$ is bounded. This proves the result.
\end{proof}

\begin{proposition}
    \label{prop:transL}
    Let $\Gamma$ be a group and $\Lambda$ a left quasi-sungroup of $\Gamma$. Let $a\in \Gamma$.
    \begin{enumerate}
        \item if $a\in\Comm_\Gamma (\Lambda)$ then $\Lambda a$ is also a left quasi-subgroup of $\Gamma$.
        \item If $\Lambda$ is symmetric and $\Lambda a$ is a left quasi-subgroup of $\Gamma$, then $a\in \Comm_\Gamma (\Lambda)$.
    \end{enumerate}
\end{proposition}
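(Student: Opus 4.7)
The plan is to treat the two parts separately; both reduce to direct manipulation of the defining inclusions for a left quasi-subgroup. Throughout I would write $\Lambda^{-1}\subseteq \Lambda K_1$ and $\Lambda^2\subseteq \Lambda K_2$ for the finite sets witnessing that $\Lambda$ is a left quasi-subgroup, and I would proceed by pushing the element $a$ past $\Lambda$ and collecting every leftover power of $a$ into the finite witness sets.

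For (1), the first step is to unpack $a\in\Comm_\Gamma(\Lambda)$ as two conjugation bounds: $a\Lambda a^{-1}\subseteq \Lambda F$ (from $a\in \LComm_\Gamma(\Lambda)$) and $a^{-1}\Lambda^{-1}a\subseteq \Lambda^{-1}F'$ (from $a\in\RComm_\Gamma(\Lambda)$, via the defining identity $\RComm_\Gamma(\Lambda)=(\LComm_\Gamma(\Lambda^{-1}))^{-1}$). Verifying the two axioms for $\Lambda a$ is then a matter of rewriting: for the squaring axiom, $(\Lambda a)^2=\Lambda (a\Lambda a^{-1})a^2\subseteq \Lambda\cdot\Lambda F\cdot a^2\subseteq \Lambda K_2 F a^2=\Lambda a\cdot (a^{-1}K_2 F a^2)$; for the inversion axiom, $(\Lambda a)^{-1}=a^{-1}\Lambda^{-1}=(a^{-1}\Lambda^{-1}a)a^{-1}\subseteq \Lambda^{-1}F'a^{-1}\subseteq \Lambda K_1 F'a^{-1}=\Lambda a\cdot (a^{-1}K_1 F' a^{-1})$. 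Both trailing sets are finite, so these give the required witnesses.

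For (2), the symmetry $\Lambda=\Lambda^{-1}$ lets me convert both hypotheses $(\Lambda a)^{-1}\subseteq \Lambda a E_1$ and $(\Lambda a)^2\subseteq \Lambda a E_2$ into conjugation bounds on $\Lambda$ alone. From the inverse hypothesis, $a^{-1}\Lambda\subseteq \Lambda a E_1$ and thus $a^{-1}\Lambda a\subseteq \Lambda\cdot (aE_1 a)$, which is an $\LComm$ inclusion for $a^{-1}$ against $\Lambda$; because $\Lambda=\Lambda^{-1}$ this is equivalent to $a\in \RComm_\Gamma(\Lambda)$. From the squaring hypothesis, after fixing any $\lambda_0\in\Lambda$ (the case $\Lambda=\emptyset$ being vacuous), I obtain $a\Lambda a\subseteq \lambda_0^{-1}\Lambda a E_2\subseteq \Lambda^2 a E_2\subseteq \Lambda K_2 a E_2$, and multiplying by $a^{-2}$ on the right yields $a\Lambda a^{-1}\subseteq \Lambda\cdot (K_2 a E_2 a^{-2})$, so $a\in \LComm_\Gamma(\Lambda)$. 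Combining gives $a\in\Comm_\Gamma(\Lambda)$.

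The main (mild) obstacle is purely bookkeeping: every inclusion produces a set of the form $\Lambda\cdot T$ where $T$ is a conjugate of a product of finite sets by fixed powers of $a$, and one has to consistently recognise that such $T$ remains finite so that it legitimately serves as the required witness set. Once one is vigilant about moving the stray $a$'s from the left of $\Lambda$ to the right (using the given conjugation relations) and absorbing them on the far right, each step reduces to a one-line rewriting.
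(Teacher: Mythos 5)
Your proof is correct and follows essentially the same route as the paper's: both parts reduce to rewriting the quasi-subgroup inclusions for $\Lambda a$ via the conjugation bounds coming from $\LComm$ and $\RComm$, and in part (2) you, like the paper, extract the $\LComm$ membership from the squaring axiom and the $\RComm$ membership from the inversion axiom by using symmetry of $\Lambda$ and a fixed $\lambda_0\in\Lambda$. Your version is in fact a bit more explicit than the paper's (you spell out the finite witness sets and note the vacuous $\Lambda=\emptyset$ case), but the argument is the same.
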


\begin{proof}
    Let $a\in\Comm_\Gamma (\Lambda)$. Since $a\in\RComm_\Gamma(\Lambda)$, we get that
    $(\Lambda a)^{-1}=a^{-1}\Lambda^{-1}\subseteq\Lambda^{-1} F_1\subseteq (\Lambda a)F_1'$ for some finite subsets $F_1$ and $F_1'$ of $\Gamma$. Because of $a\in\LComm_\Gamma(\Lambda)$ we also get that 
    $(\Lambda a)^2=\Lambda (a\Lambda) a\subseteq \Lambda^2F_2(\Lambda a)F_2'$ or some finite subsets $F_2$ and $F_2'$ of $\Gamma$. This shows (1).

    Now assume that $\Lambda$ is a symmetric left quasi-subgroup, and $\Lambda a$ is a left quasi-subgroup of $\Gamma$. Then $\Lambda a\Lambda\subseteq \Lambda F_1$ for some finite $F_1\subseteq \Gamma$, therefore $a\Lambda\subseteq \Lambda^{-1}\Lambda F_1=\Lambda^2F_1\subseteq \Lambda F_1'$ for some finite $F_1'\subseteq \Gamma$. This shows that $a\in\LComm_\Gamma(\Lambda)$. Similarly, $a^{-1}\Lambda=(\Lambda a)^{-1}\subseteq \Lambda aF_2$ for some finite subset $F_2$ of $\Gamma$, therefore $a\in\RComm_\Gamma(\Lambda)$.
\end{proof}

\begin{corollary}
    \label{prop:commgraph}
    If $\phi:G\to H$ is a symmetric quasi-homomorphism, then $\Comm_{G\times H}\grph(\phi)=\Pi^*_\phi$.
\end{corollary}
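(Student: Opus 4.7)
The plan is to deduce this corollary directly from Proposition \ref{prop:transL}, once we verify that the graph of a symmetric quasi-homomorphism is itself symmetric as a subset of $G\times H$.

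First I would observe that if $\phi$ is symmetric, meaning $\phi(g^{-1})=\phi(g)^{-1}$ for all $g\in G$, then $\grph(\phi)$ is symmetric in $G\times H$: indeed, $(g,\phi(g))^{-1}=(g^{-1},\phi(g)^{-1})=(g^{-1},\phi(g^{-1}))\in\grph(\phi)$. Combined with the fact recalled in Section \ref{s:perturbing} that $\grph(\phi)$ is a left quasi-subgroup of $G\times H$ (by \cite[Lemma 2.37]{CHT24}), this makes $\grph(\phi)$ a symmetric left quasi-subgroup of the ambient group $\Gamma=G\times H$.

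Next I would apply Proposition \ref{prop:transL} with $\Gamma=G\times H$ and $\Lambda=\grph(\phi)$. For the inclusion $\Comm_{G\times H}\grph(\phi)\subseteq\Pi^*_\phi$, take $(a,b)\in\Comm_{G\times H}\grph(\phi)$; part (1) of Proposition \ref{prop:transL} then yields that $\grph(\phi)\cdot (a,b)$ is a left quasi-subgroup of $G\times H$, which is exactly the condition defining $\Pi^*_\phi$. Conversely, if $(a,b)\in\Pi^*_\phi$, then $\grph(\phi)\cdot (a,b)$ is a left quasi-subgroup, and since $\grph(\phi)$ is symmetric, part (2) of Proposition \ref{prop:transL} forces $(a,b)\in\Comm_{G\times H}\grph(\phi)$.

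There is no real obstacle here; the whole argument is essentially a bookkeeping step, and the content has already been absorbed into Proposition \ref{prop:transL}. The only thing to be careful about is the symmetry of $\grph(\phi)$, which is where the hypothesis that $\phi$ is symmetric enters — without it, only one of the two inclusions is immediate.
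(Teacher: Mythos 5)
Your proof is correct and matches the paper's intended argument: the paper supplies no explicit proof for this corollary precisely because it is immediate from Proposition \ref{prop:transL} applied to $\Gamma=G\times H$ and $\Lambda=\grph(\phi)$, together with the fact (from \cite[Lemma 2.37]{CHT24}) that $\grph(\phi)$ is a left quasi-subgroup and your correct observation that $\phi$ symmetric is equivalent to $\grph(\phi)$ being a symmetric subset of $G\times H$.
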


If $\phi:G\to H$ is a quasi-homomorphism, then $c\in \Pi_\phi$ if and only if $(1,c)\in\Pi^*_\phi$. Thus Theorem \ref{thm:Piagain} is proved.


\section{Quasi-polynomials of groups}
\label{s:qpoly}

\noindent
\paragraph{\bf Polynomials and quasi-polynomials.}
The notion of middle quasi-homomorphisms can be generalized via the polynomial maps of groups. The latter were introduced by Leibman \cite{Lei98,Lei02} and subsequently further explored by Jamneshan and Thom \cite{JT24}. Given a map $\phi:G\to H$ and $g\in G$, we define a map $\mathfrak{d}_g\phi:G\to H$ by the rule $$(\mathfrak{d}_g\phi)(x)=\phi(gx)\phi(x)^{-1}$$ for all $x\in G$. Then polynomials can be defined inductively as follows. The map $\phi$ is polynomial of degree $-1$ if $\phi(x)=1$ for all $x\in G$. For $d\ge -1$, the map $\phi$ is a polynomial of degree $d+1$ if all the maps $\mathfrak{d}_g\phi$, where $g$ ranges through $G$, are polynomials of degree $d$. The set of all polynomials $\phi:G\to H$ of degree $d$ is denoted by $\Poly_d(G,H)$. It is not difficult to see that polynomials of degree $1$ are precisely constant perturbations of homomorphisms, cf. \cite[Lemma 2.1]{JT24}. 

To obtain a coarse version of polynomials, we follow the ideas of \cite{JT24}. 
Given $g_1,g_2,\ldots ,g_r\in G$, denote
$\mathfrak{d}_{g_1,g_2,\ldots,g_r}=\mathfrak{d}_{g_1}\circ\mathfrak{d}_{g_2}\circ\cdots\circ\mathfrak{d}_{g_r}.$ 
For $d\ge 0$ and $\phi:G\to H$ we set
$$P_d(\phi)=\{ (\mathfrak{d}_{g_1,g_2,\ldots,g_{d+1}}\phi)(1)\mid g_1,g_2,\ldots ,g_{d+1}\in G\}.$$ 
Note that the identity $(\mathfrak{d}_g\phi)(x)=(\mathfrak{d}_{gx}\phi)(1)(\mathfrak{d} _x\phi)(1)^{-1}$ implies that $\phi\in\Poly_d(G,H)$ if and only if $P_d(\phi)=1$.
Accordingly, we say that $\phi$ is a {\it quasi-polynomial} of degree $d$ if the set $P_d(\phi)$ is bounded. The set of all quasi-polynomials of degree $d$ mapping $G$ to $H$ will be denoted by $\QPoly_d(G,H)$. 

By definition, we have that 
$$P_{d+1}(\phi)=\bigcup_{g\in G}P_d(\mathfrak{d}_g\phi).$$
This implies that $\phi\in\QPoly_{d+1}(G,H)$ if and only if all $\mathfrak{d}_g\phi$ are uniformly quasi-polynomials of degree $d$, in the sense that all their middle defect sets are contained in the same finite set.

\begin{lemma}
    \label{lem:circ}
    Let $\phi\in\QPoly_d(G,H)$, and let $q:H\to L$ be a quasi-homomorphism. Then $q\phi\in \QPoly_d(G,L)$.
\end{lemma}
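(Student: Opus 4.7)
The plan is to bound $P_d(q\phi)$ directly, using two ingredients. First, I would establish a closed form for the iterated difference: by a straightforward induction on $d$ from the definition of $\mathfrak{d}_g$, for every map $\psi:G\to K$ one has
\begin{equation*}
(\mathfrak{d}_{g_1,\ldots,g_{d+1}}\psi)(1)=\psi(w_1)^{\epsilon_1}\psi(w_2)^{\epsilon_2}\cdots \psi(w_N)^{\epsilon_N},
\end{equation*}
an alternating product of $N=2^{d+1}$ values of $\psi$ at certain words $w_j$ in $g_1,\ldots,g_{d+1}$, where the $w_j$ and signs $\epsilon_j\in\{\pm 1\}$ depend only on the tuple $(g_1,\ldots,g_{d+1})$. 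Applying this to $\psi=\phi$ and to $\psi=q\phi$ with the same words and signs reduces the task to comparing $\prod_j q(\phi(w_j))^{\epsilon_j}$ with $q\!\left(\prod_j\phi(w_j)^{\epsilon_j}\right)$.

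The core step is the following key lemma: for every $N\ge 1$ there exists a bounded subset $B_N\subseteq \Delta_q$, depending only on $N$ and $q$, such that
\begin{equation*}
q(a_1)^{\epsilon_1}\cdots q(a_N)^{\epsilon_N}\in q(a_1^{\epsilon_1}\cdots a_N^{\epsilon_N})\cdot B_N
\end{equation*}
for all $a_i\in H$ and $\epsilon_i\in\{\pm 1\}$. I would prove this by induction on $N$. The base case $N=1$ is handled by Lemma \ref{lem:quasi}(2), which gives $q(a)^{-1}\sim_{D(q)^2}q(a^{-1})$. For the inductive step, write $\prod_{i=1}^{N+1}q(a_i)^{\epsilon_i}=q(W)\cdot b\cdot q(a_{N+1})^{\epsilon_{N+1}}$ with $W=\prod_{i=1}^{N}a_i^{\epsilon_i}$ and $b\in B_N\subseteq\Delta_q$; absorb $q(W)q(a_{N+1})^{\epsilon_{N+1}}$ into $q(Wa_{N+1}^{\epsilon_{N+1}})$ up to a bounded factor by Lemma \ref{lem:quasi}; and commute $b$ past $q(a_{N+1})^{\epsilon_{N+1}}$, creating a conjugate $b^{q(a_{N+1})^{\epsilon_{N+1}}}$. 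The crux is that this conjugate remains in a uniformly bounded subset of $\Delta_q$: viewing $q:H\to L$ as the quasi-homomorphism and any constant map with value $b\in B_N$ as a bounded map into $\Delta_q$, Lemma \ref{lem:boundconj} (combined with Lemma \ref{lem:delta}(2)) gives a bound depending only on $q$ and $B_N$. Hence the inductive step produces a bounded $B_{N+1}\subseteq\Delta_q$.

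Combining the two ingredients and taking $a_j=\phi(w_j)$ yields $P_d(q\phi)\subseteq q(P_d(\phi))\cdot B_N$ with $N=2^{d+1}$. Since $\phi\in\QPoly_d(G,H)$, the set $P_d(\phi)$ is bounded (hence finite by properness of the metric), so $q(P_d(\phi))$ is finite and $q(P_d(\phi))\cdot B_N$ is bounded. Therefore $P_d(q\phi)$ is bounded, i.e., $q\phi\in\QPoly_d(G,L)$. The main technical obstacle is the key lemma, specifically keeping the accumulated error in $\Delta_q$ so that the $q(H)$-conjugation estimate from Lemma \ref{lem:boundconj} applies uniformly in the arguments $a_i$; without this containment in $\Delta_q$, the conjugates could grow uncontrollably with $N$.
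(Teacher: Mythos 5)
Your proof is correct and follows essentially the same route as the paper, which shows by induction on word length that $w(q(h_1),\ldots,q(h_k))\in q(w(h_1,\ldots,h_k))F^{\mathcal{O}(|w|)}$ with $F=D(q)\cup D(q)^{-1}$, then applies this to the iterated difference map, viewed as a word of length $\mathcal{O}(2^{d})$ in the values of $\phi$. Your write-up is a bit more explicit about the step the paper leaves implicit — that the accumulated error, when commuted past a $q$-value, stays in a bounded subset because it lies in $\Delta_q$ and $q(H)$-conjugation on $\Delta_q$ is controlled by Lemma~\ref{lem:boundconj} — but the underlying argument is the same.
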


\begin{proof}
    Let $w(x_1,\ldots ,x_k)$ be an arbitrary word in a free group. Then induction on the length $|w|$ of $w$ shows that $$w(q(h_1),q(h_2),\ldots ,q(h_k))\in q(w(h_1,h_2,\ldots ,h_k))F^{\mathcal{O}(|w|)},$$
    where $F=D(q)\cup D(q)^{-1}$ and $|w|$.
    Choose $g_1,g_2,\ldots ,g_{d+1}\in G$.  Then 
    $$(\mathfrak{d}_{g_1,g_2,\ldots ,g_{d+1}}(q\phi))(1)\in q((\mathfrak{d}_{g_1,g_2,\ldots ,g_{d+1}}\phi)(1))F^{\mathcal{O}(2^d)}$$
    implies that $P_d(q\phi)\subseteq q(P_d(\phi))F^{\mathcal{O}(2^d)}$, and this concludes the proof.
\end{proof}

\begin{proposition}
    \label{prop:poly1}
    We have that $\QPoly_0(G,H)$ is precisely the set of all bounded maps $G\to H$, and that $\QPoly_1(G,H)=\MQHom(G,H)$.
\end{proposition}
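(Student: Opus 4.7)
The plan is in two pieces. For the degree-$0$ case I would simply note that $P_0(\phi)=\{\phi(g)\phi(1)^{-1}\mid g\in G\}=\phi(G)\phi(1)^{-1}$, which by left-invariance of $d$ is bounded if and only if $\phi(G)$ is bounded, i.e., if and only if $\phi$ is a bounded map.

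For $\QPoly_1=\MQHom$ I would pass through Corollary \ref{cor:taoblog}, which says $\phi\in\MQHom(G,H)$ iff the multiplicative-quadruple set $A(\phi)$ is bounded. Iterating the definition of $\mathfrak{d}_g$ gives
\[
(\mathfrak{d}_{g_1,g_2}\phi)(1)=\phi(g_2g_1)\phi(g_1)^{-1}\phi(1)\phi(g_2)^{-1},
\]
and substituting $a=g_2g_1$, $b=g_1$ rewrites this as $\phi(a)\phi(b)^{-1}\phi(1)\phi(ab^{-1})^{-1}$, which is the $A(\phi)$-element associated with the multiplicative quadruple $(a,b,1,ab^{-1})$. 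Hence $P_1(\phi)\subseteq A(\phi)$, and the inclusion $\MQHom(G,H)\subseteq\QPoly_1(G,H)$ is immediate.

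The opposite inclusion rests on a short insertion trick. A generic multiplicative quadruple has the form $(x_1,x_2,x_3,x_1x_2^{-1}x_3)$, and inserting $\phi(x_1x_2^{-1})\phi(x_1x_2^{-1})^{-1}$ in the middle of the corresponding $A$-element yields
\[
\phi(x_1)\phi(x_2)^{-1}\phi(x_3)\phi(x_1x_2^{-1}x_3)^{-1}=\bigl[\phi(x_1)\phi(x_2)^{-1}\phi(1)\phi(x_1x_2^{-1})^{-1}\bigr]\cdot\bigl[\phi(x_1x_2^{-1})\phi(1)^{-1}\phi(x_3)\phi(x_1x_2^{-1}x_3)^{-1}\bigr].
\]
The first bracket is a $P_1(\phi)$-element (parameters $a=x_1$, $b=x_2$), while the second bracket is the inverse of a $P_1(\phi)$-element (parameters $g_2=x_1x_2^{-1}$, $g_1=x_3$). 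Therefore $A(\phi)\subseteq P_1(\phi)\cdot P_1(\phi)^{-1}$, so boundedness of $P_1(\phi)$ forces boundedness of $A(\phi)$, as required.

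I do not expect any serious obstacle. The only delicate point is the bookkeeping in the final step, namely recognizing the second bracket as lying in $P_1(\phi)^{-1}$ rather than in $P_1(\phi)$ itself; once the parameter identifications are written down, the argument is just the insertion identity above.
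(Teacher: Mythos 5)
Your degree-$0$ argument is the same as the paper's. For the degree-$1$ equality, your proof is correct but takes a genuinely different route from the paper. You pass through the multiplicative-quadruple set $A(\phi)$ and Corollary~\ref{cor:taoblog}: using the second-difference identity $(\mathfrak{d}_{g_1,g_2}\phi)(1)=\phi(g_2g_1)\phi(g_1)^{-1}\phi(1)\phi(g_2)^{-1}$, you establish the two inclusions $P_1(\phi)\subseteq A(\phi)\subseteq P_1(\phi)\cdot P_1(\phi)^{-1}$, giving a coarse equivalence that transfers boundedness back and forth. The paper instead normalizes $\phi$ to $\phi^*(x)=\phi(x)\phi(1)^{-1}$ and observes an exact set equality $P_1(\phi)=(D^*(\phi^*))^{-1}$ with the right defect set of $\phi^*$, then invokes Corollary~\ref{cor:mq2q}. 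Both approaches rely on the same computation of the iterated difference; the paper's is slightly shorter and yields an exact identity rather than a two-sided coarse inclusion, while yours makes the link with the multiplicative-quadruple characterization explicit. Your parameter identifications in the two brackets check out (the first bracket is $(\mathfrak{d}_{g_1,g_2}\phi)(1)$ with $g_1=x_2$, $g_2=x_1x_2^{-1}$; the second bracket is the inverse of $(\mathfrak{d}_{g_1,g_2}\phi)(1)$ with $g_1=x_3$, $g_2=x_1x_2^{-1}$), and the insertion identity is verified by cancellation, so there is no gap.
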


\begin{proof}
    As $P_0(\phi)=\phi(G)\phi(1)^{-1}$, we clearly get the first part. For the second part, let $\phi:G\to H$, and let $\phi^*$ be defined by $\phi^*(x)=\phi(x)\phi(1)^{-1}$. Then it is straightforward to see that $P_1(\phi)=(D^*(\phi^*))^{-1}$. This shows that $\phi\in\QPoly_1(G,H)$ if and only if $\phi^*$ is a quasi-homomorphism. By Corollary \ref{cor:mq2q}, this is equivalent to the fact that $\phi$ is a middle quasi-homomorphism.
\end{proof}

From here on we will focus exclusively on (quasi)-polynomials of degree $2$, which we also call {\it (quasi)-quadratic maps}. With a little effort, most of the results in what follows can be stated and proved for polynomial maps of arbitrary degree, but we stick with the quadratic case for simplicity. We will repeatedly use the following identity which can be verified by straightforward calculation:

\begin{lemma}
    \label{lem:dg1g2g3}
    Let $\phi:G\to H$ be any map. Then
    \begin{multline*}
        (\mathfrak{d}_{g_1,g_2,g_3}\phi)(1)=\phi(g_3g_2g_1)\phi(g_2g_1)^{-1}\phi(g_1)\phi(g_3g_1)^{-1}\\\phi(g_3)\phi(1)^{-1}\phi(g_2)\phi(g_3g_2)^{-1}$$
    \end{multline*}
    for all $g_1,g_2,g_3\in G$.
\end{lemma}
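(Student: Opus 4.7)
The plan is to verify the identity by iteratively unfolding the definition $(\mathfrak{d}_g\phi)(x)=\phi(gx)\phi(x)^{-1}$, applied three times and evaluated at $x=1$. No clever manipulation is required; the work is entirely bookkeeping in a non-commutative setting.

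First I would compute $\mathfrak{d}_{g_3}\phi$, which by definition is the map $x\mapsto \phi(g_3x)\phi(x)^{-1}$. Applying $\mathfrak{d}_{g_2}$ to this, using the definition again but with the new function in place of $\phi$, gives
\[
(\mathfrak{d}_{g_2,g_3}\phi)(x)=(\mathfrak{d}_{g_3}\phi)(g_2x)\cdot\bigl((\mathfrak{d}_{g_3}\phi)(x)\bigr)^{-1}=\phi(g_3g_2x)\phi(g_2x)^{-1}\phi(x)\phi(g_3x)^{-1}.
\]
Finally, one more application of the difference operator yields
\[
(\mathfrak{d}_{g_1,g_2,g_3}\phi)(1)=(\mathfrak{d}_{g_2,g_3}\phi)(g_1)\cdot\bigl((\mathfrak{d}_{g_2,g_3}\phi)(1)\bigr)^{-1},
\]
and substituting $x=g_1$ and $x=1$ into the two-fold difference formula, then inverting the second factor term by term (reading the product from right to left), produces the eight-factor expression in the statement.

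The only place to be careful is the inversion step, since $H$ is non-commutative: $(abcd)^{-1}=d^{-1}c^{-1}b^{-1}a^{-1}$, so the inverse of $\phi(g_3g_2)\phi(g_2)^{-1}\phi(1)\phi(g_3)^{-1}$ is $\phi(g_3)\phi(1)^{-1}\phi(g_2)\phi(g_3g_2)^{-1}$, which is exactly the tail of the asserted identity. No cancellations occur between the two halves of the product because the arguments of $\phi$ do not match up, so the expansion terminates with precisely the eight factors displayed. The main (and only) obstacle is keeping track of signs of exponents and the order of multiplication; there is no hidden ingredient.
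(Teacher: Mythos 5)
Your proof is correct and is precisely the ``straightforward calculation'' that the paper invokes without writing out: unfold $\mathfrak{d}_{g_1,g_2,g_3}=\mathfrak{d}_{g_1}\circ\mathfrak{d}_{g_2}\circ\mathfrak{d}_{g_3}$ from the inside out, evaluate at $1$, and invert the four-factor block $\phi(g_3g_2)\phi(g_2)^{-1}\phi(1)\phi(g_3)^{-1}$ in reverse order. Nothing is missing.
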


As constant perturbations of (quasi)-polynomials are evidently (quasi)-polynomials of the same degree, we will only consider unital maps. 

\medskip

\paragraph{\bf Multiplicative quadruples.}
As in Section \ref{s:middle}, we let $\mathfrak{M}(G)$ to be the set of multiplicative quadruples of $G$. Note that $G$ acts on $\mathfrak{M}(G)$ from the right by component-wise multiplication. Given $\phi:G\to H$, we have a map $\mu_\phi :\mathfrak{M}(G)\to A(\phi)$ given by $\mu_\phi(\mathbf{x})=\phi(x_1)\phi(x_2)^{-1}\phi(x_3)\phi (x_4)^{-1}$ for $\mathbf{x}=(x_1,x_2,x_3,x_4)\in\mathfrak{M}(G)$.

\begin{proposition}
\label{prop:equiv}
Let $\phi:G\to H$ be a unital map.
\begin{enumerate}
    \item $\phi$ is quadratic if and only if the map $\mu_\phi$ is equivariant, that is, $\mu_\phi(\mathbf{x}\cdot t)=\mu_\phi(\mathbf{x})$ for all $\mathbf{x}\in\mathfrak{M}(G)$ and all $t\in G$.
    \item $\phi$ is quasi-quadratic if and only if $\mu_\phi$ is equivariant up to bounded error, that is, there exists $C>0$ such that $d(\mu_\phi(\mathbf{x}\cdot t),\mu_\phi(\mathbf{x}))\le C$ for all $\mathbf{x}\in\mathfrak{M}(G)$ and all $t\in G$.
\end{enumerate}
\end{proposition}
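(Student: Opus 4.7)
The plan is to extract from Lemma \ref{lem:dg1g2g3} a single identity that translates vanishing (resp.\ boundedness) of the third iterated difference of $\phi$ into equivariance (resp.\ bounded equivariance) of $\mu_\phi$. Concretely, for unital $\phi$ and $g_1,g_2,g_3\in G$, set $\mathbf{y}=(g_3g_2,g_2,1,g_3)$, which lies in $\mathfrak{M}(G)$. Unwinding Lemma \ref{lem:dg1g2g3} with $\phi(1)=1$ and regrouping the factors, one checks
$$(\mathfrak{d}_{g_1,g_2,g_3}\phi)(1)=\mu_\phi(\mathbf{y}\cdot g_1)\,\mu_\phi(\mathbf{y})^{-1}.$$
Two easy observations then do most of the work. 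First, as $g_2,g_3$ range over $G$, $\mathbf{y}$ sweeps out exactly the multiplicative quadruples with third coordinate $1$. Second, any $\mathbf{w}\in\mathfrak{M}(G)$ decomposes as $\mathbf{w}=\mathbf{y}\cdot w_3$ for such a $\mathbf{y}$ (take $\mathbf{y}=\mathbf{w}\cdot w_3^{-1}$).

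For part (1), the paper's preliminary remark gives $\phi\in\Poly_2(G,H)$ iff $P_2(\phi)=\{1\}$, which via the identity is precisely the condition $\mu_\phi(\mathbf{y}\cdot g_1)=\mu_\phi(\mathbf{y})$ for all $\mathbf{y}$ with $y_3=1$ and all $g_1\in G$. Equivariance on all of $\mathfrak{M}(G)$ follows immediately from the decomposition, since both $\mu_\phi(\mathbf{w})$ and $\mu_\phi(\mathbf{w}\cdot h)$ then collapse to $\mu_\phi(\mathbf{y})$. The reverse direction is just the specialization $\mathbf{w}=\mathbf{y}$.

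For part (2), the identity shows that $\phi\in\QPoly_2(G,H)$ is equivalent to uniform boundedness of the right-quotients $\mu_\phi(\mathbf{y}\cdot g_1)\mu_\phi(\mathbf{y})^{-1}$ over $\mathbf{y}$ with $y_3=1$ and $g_1\in G$. Extension to arbitrary $\mathbf{w}\in\mathfrak{M}(G)$ is then a one-line cocycle calculation: with $\mathbf{y}=\mathbf{w}\cdot w_3^{-1}$,
$$\mu_\phi(\mathbf{w}\cdot h)\mu_\phi(\mathbf{w})^{-1}=\bigl[\mu_\phi(\mathbf{y}\cdot w_3h)\mu_\phi(\mathbf{y})^{-1}\bigr]\bigl[\mu_\phi(\mathbf{y}\cdot w_3)\mu_\phi(\mathbf{y})^{-1}\bigr]^{-1},$$
a product of two bounded elements. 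The remaining subtlety is that the stated conclusion $d(\mu_\phi(\mathbf{x}\cdot t),\mu_\phi(\mathbf{x}))\le C$ measures the \emph{left}-quotient $\mu_\phi(\mathbf{x}\cdot t)^{-1}\mu_\phi(\mathbf{x})$, whereas the identity naturally produces right-quotients. To bridge this, I would invoke the reversal involution $\mathbf{x}\mapsto\mathbf{x}^{\mathrm{rev}}:=(x_4,x_3,x_2,x_1)$, which preserves $\mathfrak{M}(G)$, commutes with the right $G$-action, and satisfies $\mu_\phi(\mathbf{x}^{\mathrm{rev}})=\mu_\phi(\mathbf{x})^{-1}$. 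Applying the right-quotient bound at $\mathbf{x}^{\mathrm{rev}}$ rather than $\mathbf{x}$ yields exactly the desired left-quotient bound at $\mathbf{x}$. The reverse implication of (2) reads this chain backwards.

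The only genuine obstacle is this left/right asymmetry, forced on us because $d$ is left-invariant while $P_2(\phi)$ naturally produces right-quotients. It is cleanly dissolved by the reversal symmetry of multiplicative quadruples, which is automatic once one notices $\mu_\phi(\mathbf{x}^{\mathrm{rev}})=\mu_\phi(\mathbf{x})^{-1}$ and $(\mathbf{x}\cdot t)^{\mathrm{rev}}=\mathbf{x}^{\mathrm{rev}}\cdot t$; everything else is bookkeeping with Lemma \ref{lem:dg1g2g3}.
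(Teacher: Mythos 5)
Your proof is correct and takes essentially the same route as the paper's: it extracts from Lemma \ref{lem:dg1g2g3} the same identity (the paper writes it as $\mu_\phi(\mathbf{x})=\mu_\phi(\mathbf{x}\cdot x_3^{-1})$, you normalize the base point to third coordinate $1$), derives equivariance by applying it twice, and handles the left/right-quotient mismatch in part (2) via the same reversal involution $\mathbf{x}\mapsto(x_4,x_3,x_2,x_1)$ (called $\mathbf{x}^{\mathrm{opp}}$ in the paper) using $\mu_\phi(\mathbf{x}^{\mathrm{rev}})=\mu_\phi(\mathbf{x})^{-1}$.
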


\begin{proof}
    First we deal with (1).
    Let $\phi$ be a unital quadratic map, $\mathbf{x}=(x_1,x_2,x_3,x_4)\in \mathfrak{M}(G)$ and $t\in G$. Apply Lemma \ref{lem:pol2} with $g_1=x_3$, $g_2=x_2x_3^{-1}$ and $g_3=x_4x_3^{-1}$ to derive 
    $$
        \phi(x_1)\phi(x_2)^{-1}\phi(x_3)\phi(x_4)^{-1}=\phi(x_1x_3^{-1})\phi(x_2x_3^{-1})^{-1}\phi(x_4x_3^{-1})^{-1}.
    $$
    This can be rewritten as $\mu_\phi(\mathbf{x})=\mu_\phi(\mathbf{x}\cdot x_3^{-1})$. As this is true for all $\mathbf{x}\in \mathfrak{M}(G)$, we may apply it with $\mathbf{x}\cdot t$ to obtain $\mu_\phi(\mathbf{x}\cdot t)=\mu_\phi((\mathbf{x}\cdot t)\cdot (x_3t)^{-1})=\mu_\phi(\mathbf{x}\cdot x_3^{-1})=\mu_\phi(\mathbf{x})$. This shows that $\mu_\phi$ is equivariant.

    Conversely, suppose that the map $\mu_\phi$ is equivariant. Pick $g_1,g_2,g_3\in G$. Then $\mathbf{x}=(g_3g_2g_1,g_2g_1,g_1,g_3g_1)$ is a multiplicative quadruple. Therefore $\mu_\phi(\mathbf{x})=\mu_\phi(\mathbf{x}\cdot g_1^{-1})$. It is easy to see that this is equivalent to $(\mathfrak{d}_{g_1,g_2,g_3}\phi)(1)=1$.

    The proof of (2) follows the above lines. At first we introduce some notation. If $\mathbf{x}=(x_1,x_2,x_3,x_4)\in \mathfrak{M}(G)$, then we denote $\mathbf{x}^{\rm opp}=(x_4,x_3,x_2,x_1)$. Clearly, $\mathbf{x}^{\rm opp}\in\mathfrak{M}(G)$ and $\mu_\phi(\mathbf{x}^{\rm opp})=\mu_\phi(\mathbf{x})^{-1}$. Now assume that $\phi$ is quasi-quadratic. Denote $S=P_2(\phi)$. Then the above argument can be adjusted to conclude that $\mu_\phi(\mathbf{x}\cdot t)\mu_\phi(\mathbf{x}\cdot x_3^{-1})^{-1}\in S$ for all $\mathbf{x}\in\mathfrak{M}(G)$ and all $t\in G$. This can clearly be rewritten as $\mu_\phi(\mathbf{x}^{\rm opp}\cdot t)\sim_S\mu_\phi(\mathbf{x}^{\rm opp}\cdot x_3^{-1})$. When $\mathbf{x}$ run through all $\mathfrak{M}(G)$, the same goes for $\mathfrak{x}^{\rm opp}$, and this readily implies that $\mu_\phi$ is equivariant up to bounded error. The converse is similar.
\end{proof}

\medskip

\paragraph{\bf Normal quasi-quadratic maps.}
Let $\phi\in\QPoly_2(G,H)$ be a unital quasi-quadratic map. We say that $\phi$ is {\it nearly normal} if there exist finite subsets $K$ and $Y$ in $H$ such that
\begin{enumerate}
    \item[(Q1)] $P_2(\phi)\subseteq K$,
    \item[(Q2)] $\phi(G)\subseteq N_H(\Xi_\phi)$, where $\Xi_\phi =\langle K\rangle$,
    \item[(Q3)] $Y$ is a symmetric set, and $\phi(G)\cup\phi(G)^{-1}$ is covered by the union of cosets $C_H(\Xi_\phi)y$, where $y\in Y$.   
\end{enumerate}

If $\phi:G\to H$ is a nearly normal quasi-quadratic map, then $\Xi_\phi$ is normal in $\langle \phi(G)\rangle$ by (Q2). By replacing $H$ with $\langle\phi(G)\rangle$ we may, and will from here on, assume that $\phi(G)$ generates $H$, and that $\Xi_\phi$ is normal in $H$. Furthermore, the induced map $G\to H/\Xi_\phi$ is a unital quadratic map. Note also that (Q3) is essentially the same as saying that the set $K^{\phi(G)\cup\phi(G)^{-1}}$ is finite.

If a nearly normal quadratic map $\phi$ additionally satisfies
\begin{enumerate}
    \item[(Q4)] There exists $n\ge 0$ such that $Y^{n+1}\subseteq Y^nC_H(\Xi_\phi)\Xi_{\phi}$,
\end{enumerate}
then we say that $\phi$ is a {\it normal quasi-quadratic map}.

Among examples of normal quasi-quadratic maps we find unital bounded maps, unital quasi-homomorphisms (see Lemma \ref{lem:delta}), and unital quasi-quadratic maps with commutative targets.

We define a map $\phi:G\to H$ to be {\it almost quadratic} if $P_2(\phi)$ is contained in a finite normal subgroup $N$ of $H$. Such a map satisfies (Q1)--(Q4). One can take $K=N$, and $Y$ comes from the fact that $H/C_H(N)$ is a finite group. Thus unital almost quadratic maps are normal.

One can also show:

\begin{proposition}
    \label{prop:corcnormal}
    Let $\phi:G\to H$ be a unital quadratic map, and let $q:H\to L$ be a unital quasi-homomorphism. Then $q\phi$ is a normal quasi-quadratic map.
\end{proposition}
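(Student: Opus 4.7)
The plan is to verify conditions (Q1)--(Q4) of the definition of a normal quasi-quadratic map, choosing $K$ and $\Xi_{q\phi}$ so that everything reduces to Lemma \ref{lem:delta} applied to $q$. Since $\phi$ is quadratic we have $P_2(\phi)=\{1\}$, and since $q$ is unital the estimate established in the proof of Lemma \ref{lem:circ} specializes to $P_2(q\phi)\subseteq q(P_2(\phi))F^{\mathcal{O}(4)}=F^{\mathcal{O}(4)}$, where $F=D(q)\cup D(q)^{-1}$ is finite. I would take $K$ to be any finite subset of $\Delta_q$ containing both $F$ and $P_2(q\phi)$; this establishes (Q1) and forces $\Xi_{q\phi}=\langle K\rangle=\Delta_q$.

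With $\Xi_{q\phi}=\Delta_q$, conditions (Q2) and (Q3) are immediate from Lemma \ref{lem:delta} applied to the quasi-homomorphism $q$. Part (1) gives $q\phi(G)\subseteq q(H)\subseteq N_L(\Delta_q)$, and part (3) produces a finite family of cosets of $C_L(\Delta_q)$ covering $q(H)\cup q(H)^{-1}$, hence also covering $q\phi(G)\cup q\phi(G)^{-1}$. I would then take $Y$ to be a symmetric set of representatives for those cosets that actually meet $q\phi(G)\cup q\phi(G)^{-1}$, chosen from within $q\phi(G)\cup q\phi(G)^{-1}$ itself and arranged to contain $1=q\phi(1)$.

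The substantive step is (Q4). Both $C_L(\Delta_q)$ and $\Delta_q$ are normal in $N_L(\Delta_q)$, so $\bar{N}:=N_L(\Delta_q)/C_L(\Delta_q)\Delta_q$ is a group, with projection $\pi$. Because $D(q)\subseteq\Delta_q\subseteq C_L(\Delta_q)\Delta_q=\ker\pi$, the quasi-homomorphism identity $q(ab)=q(a)q(b)d$ with $d\in D(q)$ collapses after applying $\pi$, so $\pi\circ q\colon H\to\bar{N}$ is an honest group homomorphism. By Lemma \ref{lem:delta}(3) its image $\pi(q(H))$ is finite, hence a finite subgroup of $\bar{N}$. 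Now $\pi(Y)$ is a finite symmetric subset of this finite subgroup containing $1$, so the non-decreasing sequence $\pi(Y)^n$ stabilizes at some index $n$ to the subgroup $\langle\pi(Y)\rangle$, yielding $\pi(Y)^{n+1}\subseteq\pi(Y)^n$. Lifting this back to $L$ gives exactly $Y^{n+1}\subseteq Y^n C_L(\Xi_{q\phi})\Xi_{q\phi}$, which is (Q4).

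The hard part is (Q4): the boundedness of $P_2(q\phi)$ furnished by Lemma \ref{lem:circ} is not on its own enough, and the key insight is to pass to $\bar{N}$, in which the containment $D(q)\subseteq C_L(\Delta_q)\Delta_q$ makes $\pi\circ q$ an honest homomorphism, while Lemma \ref{lem:delta}(3) supplies the finiteness of its image that forces the ball growth $\pi(Y)^n$ to terminate.
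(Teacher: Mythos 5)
Your proof is correct, and it follows the same plan as the paper: read off $P_2(q\phi)\subseteq F^{\mathcal{O}(1)}$ from Lemma \ref{lem:circ}, take $\Xi_{q\phi}=\Delta_q$, and invoke Lemma \ref{lem:delta} for (Q2), (Q3). Where the two diverge is (Q4): the paper disposes of it in one clause by pointing to Lemma \ref{lem:delta}(3), without saying why a symmetric transversal from that lemma satisfies the ball-stabilization condition. You supply a complete argument, and a nice one --- the observation that $D(q)\subseteq C_L(\Delta_q)\Delta_q$ makes $\pi\circ q\colon H\to N_L(\Delta_q)/C_L(\Delta_q)\Delta_q$ an honest homomorphism with finite image (by Lemma \ref{lem:delta}(3)), so that $\pi(Y)^n$ is a non-decreasing sequence of subsets of a finite group containing $1$ and therefore stabilizes. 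This is exactly what the paper's citation hides. One remark: there is a slightly more hands-on alternative for (Q4). If you instead choose $Y$ to be a symmetric transversal for the cosets of $C_L(\Delta_q)$ meeting all of $q(H)\cup q(H)^{-1}$, then the quasi-homomorphism identities $q(a)q(b)\in q(ab)\Delta_q$ and $q(a)^{-1}\in q(a^{-1})\Delta_q$, together with normality of $C_L(\Delta_q)$ and $\Delta_q$ in $N_L(\Delta_q)$, give $Y^2\subseteq q(H)\Delta_q\subseteq Y C_L(\Delta_q)\Delta_q$ directly, i.e.\ (Q4) with $n=1$. Your quotient argument has the advantage that it works for the smaller transversal $Y\subseteq q\phi(G)\cup q\phi(G)^{-1}$ you chose, at the price of an unspecified $n$; both are fine.
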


\begin{proof}
    Let $F=D(q)\cup D(q)^{-1}$. Then the proof of Lemma \ref{lem:circ} shows that $P_2(q\phi)\subseteq F^n$ for some $n$. Set $K=F^n$ and $\Xi_{q\phi}=\langle K\rangle=\Delta_q$. These meet the conditions (Q1) and (Q2) by Lemma \ref{lem:delta}. The set $Y$ meeting the conditions (Q3) and (Q4) for $q\phi$ can now be obtained from part (3) of Lemma \ref{lem:delta}.
\end{proof}

\medskip

\paragraph{\bf  Constructibility of normal quasi-quadratic maps.}
 We recall a construction from \cite{JT24}. Given a group $G$, let $\Pol2 G$ be a group generated by the symbols $\tau(g)$, where $g\in G$, subject to the relations
$$\tau(g_3g_2g_1)\tau(g_2g_1)^{-1}\tau(g_1)\tau(g_3g_1)^{-1}\tau(g_3)\tau(g_2)\tau(g_3g_2)^{-1}=1$$
for all $g_1,g_2,g_3\in G$. The structure of $\Pol2 G$ is described in \cite[Theorem 1.3]{JT24}. We will use the following:

\begin{lemma}[\cite{JT24}]
    \label{lem:pol2}
    There is an epimorphism $\pi :\Pol2 G\to G$, whose kernel $N(G)$ is an abelian group. For every unital quadratic map $\phi :G\to H$, there exists a unique homomorphism $\kappa :\Pol2 G\to H$ such that $\kappa\tau=\phi$.
\end{lemma}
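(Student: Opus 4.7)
The plan is to handle three points in turn: the universal property producing $\kappa$, the existence and surjectivity of $\pi$, and the abelianness of its kernel.

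For $\kappa$, suppose $\phi : G \to H$ is unital quadratic. The defining condition $(\mathfrak{d}_{g_1,g_2,g_3}\phi)(1) = 1$, combined with Lemma \ref{lem:dg1g2g3} and $\phi(1) = 1$, reduces exactly to
$$\phi(g_3g_2g_1)\phi(g_2g_1)^{-1}\phi(g_1)\phi(g_3g_1)^{-1}\phi(g_3)\phi(g_2)\phi(g_3g_2)^{-1} = 1,$$
which is the defining relator of $\Pol2 G$ with $\phi$ in place of $\tau$. Hence $\tau(g) \mapsto \phi(g)$ respects all relators and extends uniquely to a homomorphism $\kappa : \Pol2 G \to H$, uniqueness being automatic since the $\tau(g)$ generate.

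For $\pi$, a direct computation gives $(\mathfrak{d}_g \mathrm{id}_G)(x) = g$, so $\mathfrak{d}_{g_1}\mathfrak{d}_{g_2}\mathrm{id}_G$ is identically $1$; hence $\mathrm{id}_G$ is a unital polynomial of degree $1$, and in particular a unital quadratic map $G \to G$. The universal property from the previous step then furnishes $\pi$ with $\pi \circ \tau = \mathrm{id}_G$, and surjectivity is immediate. Alternatively, one can verify by hand that the relator collapses to $1$ in any group, since the expression $g_3g_2g_1 \cdot (g_2g_1)^{-1} \cdot g_1 \cdot (g_3g_1)^{-1} \cdot g_3 \cdot g_2 \cdot (g_3g_2)^{-1}$ telescopes trivially.

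The third claim---that $N(G) := \ker \pi$ is abelian---is the main obstacle. Since $\tau$ is a set-theoretic section of $\pi$ with $\tau(1) = 1$ (the latter deduced by substituting $g_2 = 1$ into the defining relator), $N(G)$ is generated by the defect elements $c(a,b) := \tau(a)\tau(b)\tau(ab)^{-1}$. A direct manipulation of the defining relator, inserting $\tau(g_3)\tau(g_3)^{-1}$ to re-group terms into $c$-factors, yields the identity $c(g_3, g_2 g_1) = c(g_3, g_1)\,c(g_3, g_2)$ in $\Pol2 G$; a parallel rewriting of the same relator with factors collected around the first argument produces a symmetric identity for $c(\cdot, g)$. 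To extract $[c(a,b), c(c,d)] = 1$ for distinct pairs one combines these two families with the universal property applied to carefully chosen abelian-kernel targets (for instance, semidirect products $A \rtimes G$ with $A$ the free abelian group on suitable symbols $[a,b]$ subject to only the $\pi$-induced relations). This bookkeeping is the technical crux and is carried out explicitly in \cite[Theorem 1.3]{JT24}; once established, $N(G)$ is generated by pairwise-commuting elements and is therefore abelian.
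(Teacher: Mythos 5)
The paper does not prove this lemma at all: it is cited as a black box from \cite{JT24}, and no argument is supplied in the text. So there is no in-paper proof to compare against; your attempt is an attempted reconstruction.

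Your treatment of the universal property and of $\pi$ is correct. The relator of $\Pol2 G$ is, by Lemma~\ref{lem:dg1g2g3} and unitality, exactly the statement $(\mathfrak{d}_{g_1,g_2,g_3}\phi)(1)=1$ with $\phi$ in place of $\tau$, so the presentation gives $\kappa$; and $\mathrm{id}_G$ is a unital polynomial of degree $1$ (hence $2$), so the universal property yields $\pi$ with $\pi\tau=\mathrm{id}_G$. The telescoping check is also fine. Likewise, the observation that $N(G)$ is generated (as a group, not merely as a normal subgroup) by the $c(a,b)=\tau(a)\tau(b)\tau(ab)^{-1}$ is correct: it follows from a Schreier transversal argument in the free group on $\{x_g\}$, using that $\Pol2 G$ is generated by $\tau(G)$ and that $\tau(1)=1$. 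Your derivation of $c(g_3,g_2g_1)=c(g_3,g_1)\,c(g_3,g_2)$ from the relator is also right.

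The gap is in the claimed ``parallel rewriting of the same relator'' producing ``a symmetric identity for $c(\cdot,g)$''. The relator is a single constraint, and every way of regrouping it collapses to the same identity $c(g_3,g_2g_1)=c(g_3,g_1)c(g_3,g_2)$, i.e.\ multiplicativity in the \emph{second} slot only. What one gets in the first slot comes from the extension $2$-cocycle identity $c(a,b)\,c(ab,c)=\bigl(\tau(a)c(b,c)\tau(a)^{-1}\bigr)\,c(a,bc)$, and after substituting the relator identity it reads
$$c(ab,c)=c(a,b)^{-1}\bigl(\tau(a)c(b,c)\tau(a)^{-1}\bigr)c(a,c)\,c(a,b),$$
which is a \emph{crossed} multiplicativity, twisted by conjugation by $\tau(a)$ and by $c(a,b)$ --- not a symmetric counterpart of the second-slot identity. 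So the two ``families'' you propose to combine are not both available; as written, the pairwise-commutation step is not established. Your final paragraph defers to \cite[Theorem~1.3]{JT24} for the bookkeeping, and that is indeed where the abelianness of $N(G)$ is proved (the paper does the same by simply citing it); but as a self-contained sketch, step (c) is unsupported and would not survive as a proof. If you want to close the gap yourself, the cleanest route is probably the one you gesture at in parentheses: build a candidate abelian-kernel extension $E$ of $G$ by the universal $G$-module with a $2$-cocycle satisfying the second-slot additivity, construct a unital quadratic section $G\to E$, apply the universal property to get $\kappa:\Pol2 G\to E$, and show $\kappa$ is injective on $N(G)$; this would make the vague final step precise.
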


We now explain how normal quasi-quadratic maps can be constructed in the spirit of constructibility of quasi-homomorphisms obtained in \cite[Theorem 3.6]{FK16}. 

\begin{proof}[Proof of Theorem \ref{thm:normalpoly}]
    Let $\phi:G\to H$ be a normal quasi-quadratic map, and let $K$, $\Xi_\phi$ and $Y$ be as in (Q1)--(Q4). As $\Xi_\phi$ is normal in $H$, we have automorphisms $\ad (\phi (g))$ and $\ad (\phi (g)^{-1})$ of $\Xi_\phi$, where $g\in G$. By the condition (Q3), for every $g\in G$ there exist $y,y'\in Y$, depending on $g$, such that $\ad(\phi(g))=\ad y$ and $\ad (\phi (g)^{-1})=\ad y'$. We therefore conclude that the maps $\tilde{\Phi}^{\pm} :G\to \Aut\Xi_\phi$, given by $\tilde{\Phi}^{\pm}(g)=\ad (\phi(g)^{\pm 1})$, have finite images.

    As $P_2(\phi)$ is contained in $\Xi_\phi$, the map $\tilde{\Phi}^+$ induces a unital quadratic map $\Phi:G\to\Out\Xi_\phi$. The map $\tilde{\Phi}^-$ gives rise to a map $\Psi:G\to \Out\Xi_\phi$, and we have that $\Psi(g)=\Phi(g^{-1})$. By Lemma \ref{lem:pol2}, there is a unique homomorphism $\kappa:\Pol2 G\to\Out\Xi_\phi$ such that $\kappa\tau=\Phi$. Denote $G_o=\ker\kappa$.
    We claim that $G_o$ has finite index in $\Pol2 G$. Pick arbitrary $g_1,\ldots ,g_r\in G$ and $\epsilon_i=\pm 1$ for $i=1,\ldots ,r$. Note that
    $$\kappa \left ( \prod_i^r \tau(g_i)^{\epsilon_i}\right )=\ad\left (\prod_i^r \phi(g_i)^{\epsilon_i}\right )\Inn \Xi_\phi=\ad\left (\prod_i^r y_{j_i}^{\epsilon_i}\right )\Inn \Xi_\phi.$$
    By the condition (Q4), the homomorphism $\kappa$ has finite image, hence the claim follows.

    Given $z\in \Pol2(G)$, there exists $y\in Y$ such that $\phi\pi(z)\in C_H(\Xi_\phi)y$. If $z\in G_o$, then $\ad \phi\pi(z)\in \Inn\Xi_\phi$. In this case we can therefore choose $y$ to belong to $\Xi_\phi$. Hence $\tilde{\Phi}^+\pi(G_0)\cup \tilde{\Phi}^-\pi(G_0)$ is covered by finitely many cosets $C_H(\Xi_\phi)z_i$, where $z_i\in\Xi_\phi$, $i=1,\ldots,s$.

    We now consider the map $\hat{\phi}=\phi\pi:\Pol2 G\to H$ instead of $\phi$. It is fairly straightforward to see that $\hat{\phi}$ is a normal quasi-quadratic map. Namely, we have that $(\mathfrak{d}_{t_1,t_2,t_3}\hat{\phi})(1)=(\mathfrak{d}_{\pi(t_1),\pi(t_2),\pi(t_3)}\phi)(1)$ for all $t_1,t_2,t_3\in\Pol2 G$. As $\pi$ is surjective, we conclude that $P_2(\hat{\phi})=P_2(\phi)$. As $\hat{\phi}(\Pol2 G)^{\pm 1}=\phi(G)^{\pm 1}$, we can take the same $K$ and $Y$ for $\hat{\phi}$ to fulfill (Q1)--(Q4). In particular, $\Xi_{\hat{\phi}}=\Xi_\phi$.

    Mimicking \cite{FK16}, we define $r:\hat{\phi}(G_o)\to C_H(\Xi_\phi)$ by sending $x\in \hat{\phi}(G_o)$ to a chosen element $r(x)$ of $C_H(\Xi_\phi)$ with the property that $r(x)^{-1}x=z_i$ for some $i=1,\ldots ,s$. We claim that the map $\phi_o=r\hat{\phi}: G_0\to C_H(\Xi_\phi)$ is normal quasi-quadratic. Given $z\in G_o$, we have that $\hat{\phi}(z)=\phi_o(z)z_i$ for some $i=1,\ldots ,s$. Observe that $z_i$ commute with all elements of $\phi_o(G_o)$. Choose arbitrary $x_1,x_2,x_3\in G_o$. Then $(\mathfrak{d}_{x_1,x_2,x_3}\hat{\phi})(1)=(\mathfrak{d}_{x_1,x_2,x_3}\phi_o)(1)w$, where $w$ is a word of uniformly bounded length in $z_i^{\pm 1}$, $i=1,\ldots ,s$. Let $S$ be the finite set containing all possible $w$. Then $P_2(\phi_o)\subseteq KS^{-1}$, and $\Xi_{\phi_o}\subseteq \Xi_{\hat{\phi}}=\Xi_\phi$. As $\phi_o(G_o)\cup \phi_o(G_o)^{-1}$ is covered by the coset $C_H(\Xi_{\phi_o})\cdot 1$, the map $\phi_o$ obviously satisfies (Q4).

    It is easy to see that, given $z\in G_o$, we have that 
    $d(\hat{\phi}(z),\phi_o(z))\le \max\limits_{i=1,\ldots, s}\lVert z_i\rVert,$
    hence $$d(\hat{\phi}\big|_{G_o},\phi_o)<\infty.$$ Furthermore, the map $\phi_o$ induces a unital quadratic map $\tilde{\phi}_o:G_o\to \langle \phi_o(G_o)\rangle/\Xi_{\phi_o}$ with the additional property that $\Xi_{\phi_o}$ is central in $\langle \phi_o(G_o)\rangle$. Note that $\Xi_{\phi_o}$ is clearly finitely generated. This finishes the proof of Theorem \ref{thm:normalpoly}.
\end{proof}

We remark that the above proof shows that if $\phi:G\to H$ is a unital quasi-quadratic map with the property that $\phi$ maps $G$ into $C_H(\langle P_2(\phi)\rangle)$, then $\phi$ is automatically normal.
\medskip

\paragraph{\bf Torsion-free hyperbolic targets.}

\noindent
An application of Theorem \ref{thm:normalpoly} is the following quadratic counterpart of \cite[Theorem 4.1]{FK16}:

\begin{proposition}
    \label{prop:normalhyperb}
    Let $H$ be a torsion-free hyperbolic group. Let $\phi:G\to H$ be a nearly normal quasi-quadratic map. If $\phi$ is not bounded, then either $\phi$ is quadratic, or $\phi(G)$ is contained in a cyclic subgroup of $H$.
\end{proposition}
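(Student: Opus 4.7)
The plan is to mirror the argument given for quasi-homomorphisms in \cite[Theorem 4.1]{FK16}, with $\Xi_\phi$ and $P_2(\phi)$ playing the roles of $\Delta_\phi$ and $D(\phi)$, and with conditions (Q2), (Q3) substituting for the corresponding parts of Lemma \ref{lem:delta}. The entire argument is a dichotomy on whether $\Xi_\phi$ is trivial.

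\textbf{Case 1: $\Xi_\phi = 1$.} Evaluating Lemma \ref{lem:dg1g2g3} at $g_1=g_2=g_3=1$ and using unitality of $\phi$ shows $1 \in P_2(\phi)$; combined with $P_2(\phi)\subseteq K\subseteq\Xi_\phi$ from (Q1), the hypothesis $\Xi_\phi=1$ then forces $P_2(\phi)=\{1\}$, i.e.\ $\phi$ is quadratic, and there is nothing more to prove.

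\textbf{Case 2: $\Xi_\phi\neq 1$.} Choose $1\neq k\in\Xi_\phi$. Since $H$ is torsion-free hyperbolic, $k$ has infinite order and $C_H(k)$ is a maximal infinite cyclic subgroup; hence $C_H(\Xi_\phi)\subseteq C_H(k)$ is itself cyclic. If $C_H(\Xi_\phi)=1$, condition (Q3) forces $\phi(G)\cup\phi(G)^{-1}\subseteq Y$, a finite set, contradicting that $\phi$ is unbounded. Otherwise $C_H(\Xi_\phi)=\langle d\rangle$ with $d$ of infinite order. Since $C_H(\Xi_\phi)$ is normal in $N_H(\Xi_\phi)$, conjugation by any element of $N_H(\Xi_\phi)$ preserves $\langle d\rangle$, so $N_H(\Xi_\phi)\subseteq N_H(\langle d\rangle)$, and by (Q2) we obtain $\phi(G)\subseteq N_H(\langle d\rangle)$.

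The last step, which I expect to be the main (though standard) obstacle, is verifying that $N_H(\langle d\rangle)$ is in fact cyclic. In the hyperbolic group $H$, $N_H(\langle d\rangle)$ lies inside the unique maximal virtually cyclic subgroup $E(d)$ containing $d$; in a torsion-free hyperbolic group, every infinite virtually cyclic subgroup must be infinite cyclic, since the only other option, infinite dihedral, contains $2$-torsion. Granting this classical fact, $\phi(G)$ is contained in a cyclic subgroup of $H$, completing the proof.
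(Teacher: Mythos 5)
Your proof is correct and follows essentially the same route as the paper: dichotomy on whether $\Xi_\phi$ is trivial, using (Q1) in the trivial case and (Q2)/(Q3) together with the torsion-free hyperbolic structure (centralizers of infinite-order elements are cyclic, normalizers of infinite cyclic subgroups are cyclic) in the nontrivial case. The only cosmetic difference is that the paper shows $\Xi_\phi$ itself is infinite cyclic and then $N_H(\Xi_\phi)=C_H(\Xi_\phi)$ via \cite{BH99}, whereas you instead establish that $C_H(\Xi_\phi)=\langle d\rangle$ and pass through $N_H(\Xi_\phi)\subseteq N_H(\langle d\rangle)\subseteq E(d)$; both rely on the same classical facts.
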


\begin{proof}
    Let $K$ and $Y$ be as in (Q1)--(Q3). Let $\Xi_\phi=\langle K\rangle$. If $\Xi_\phi$ is trivial, the map $\phi$ is quadratic by (Q1). If $\Xi_\phi$ is non-trivial, it has to be infinite. As $\phi(G)$ is infinite, the centralizer $C_H(\Xi_\phi)$ is infinite by (Q3), therefore $\Xi_\phi$ is infinite cyclic \cite[Corollary III.$\Gamma$.3.10]{BH99}. Then it follows from \cite[Proposition III.$\Gamma$.3.16]{BH99} that $N_H(\Xi_\phi)=C_H(\Xi_\phi)$ is infinite cyclic, too, therefore the result follows by (Q2). 
\end{proof}

We proceed now to proving Theorem \ref{thm:distancenearly}.

\begin{lemma}[\cite{Lei98}, Lemma 1.21]
    \label{lem:leibman}
    Let $\phi:G\to H$ be a non-constant quadratic map. If $H$ is torsion-free, then $\phi(G)$ is infinite.
\end{lemma}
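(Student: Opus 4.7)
The plan is to reduce to the case $G=\Z$ and analyse a second-order recursion coming directly from the quadratic identity. Perturbing $\phi$ on the right by a constant preserves quadraticness (since $(\mathfrak{d}_g\phi_c)(x)=\phi(gx)c(\phi(x)c)^{-1}=(\mathfrak{d}_g\phi)(x)$) and does not affect the cardinality of the image, so we may assume $\phi$ is unital. By non-constancy there exists $g_0\in G$ with $a:=\phi(g_0)\neq 1$. Since the restriction of $\phi$ to the cyclic subgroup $\langle g_0\rangle$ is again a unital quadratic map, it suffices to treat the case $G=\Z$ with generator $g$ and show that $\phi(\Z)$ must be infinite.

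Write $x_n=\phi(g^n)$, so that $x_0=1$ and $x_1=a\neq 1$. Applying Lemma \ref{lem:dg1g2g3} with $g_1=g^n$ and $g_2=g_3=g$ and cancelling terms yields the second-order recursion
\[
x_{n+2}=c\,x_{n+1}x_n^{-1}x_{n+1},\qquad c:=x_2x_1^{-2}.
\]
A short induction on $n$ produces the telescoping identity $x_{n+1}=c^na\cdot x_n$, and consequently the closed form
\[
x_n=(c^{n-1}a)(c^{n-2}a)\cdots(ca)(a)\qquad (n\geq 1).
\]

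Suppose for contradiction that $\phi(\Z)$ is finite. The pair $(x_n,x_{n+1})$ then takes only finitely many values; the recursion is invertible, since solving for $x_{n-1}$ gives $x_{n-1}=x_nx_{n+1}^{-1}cx_n$, so the sequence is purely periodic: $x_{n+p}=x_n$ for all $n\in\Z$ and some $p\geq 1$. Evaluating at $n=0$ and $n=1$ gives $x_p=1$ and $x_{p+1}=a$. Substituting into the telescoping identity $x_{p+1}=c^pa\cdot x_p$ yields $c^pa=a$, hence $c^p=1$, and torsion-freeness of $H$ forces $c=1$. With $c=1$ the closed form collapses to $x_n=a^n$, and $x_p=1$ becomes $a^p=1$; torsion-freeness again gives $a=1$, contradicting $a\neq 1$.

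The main obstacle is choosing the right substitution in the quadratic identity to extract a clean scalar recursion and then recognising the telescoping relation $x_{n+1}=c^na\cdot x_n$, which turns the closed form into something transparent. Once these are in place the argument closes by applying torsion-freeness twice, first to kill the ``second difference'' $c$ and then the ``first difference'' $a$.
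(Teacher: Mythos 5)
Your proof is correct, but a useful comparison is not to a proof in the paper but to its absence: the paper states this lemma with a citation to Leibman's \cite{Lei98} and does not prove it. You have supplied a complete self-contained argument, and it is in the same spirit as the technique the paper uses in its own Lemma~\ref{lem:Z}, namely substituting into the seven-term quadratic identity of Lemma~\ref{lem:dg1g2g3} to extract a recursion on $x_n=\phi(g^n)$. Your substitution $(g_1,g_2,g_3)=(g^n,g,g)$ is different from the paper's $(1,1,n-1)$ used in Lemma~\ref{lem:Z}; yours produces the one-step first-difference update $y_{n+1}=cy_n$ with $y_n=x_{n+1}x_n^{-1}$, which is what makes the telescoping and the two applications of torsion-freeness (first to kill $c$, then to kill $a$) so clean. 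That is a genuinely nicer normal form for this purpose than the paper's recursion.

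One small point of hygiene in the reduction step: you say ``it suffices to treat the case $G=\mathbb{Z}$'', identifying $\langle g_0\rangle$ with $\mathbb{Z}$, but $g_0$ could a priori have finite order. The cleanest fix is to precompose with the homomorphism $\mathbb{Z}\to G$, $n\mapsto g_0^n$ (precomposing a quadratic map with a homomorphism is again quadratic, since $\mathfrak{d}_n(\phi\circ\iota)=(\mathfrak{d}_{\iota(n)}\phi)\circ\iota$), giving a unital quadratic map $\mathbb{Z}\to H$ with $x_1=a\neq 1$ regardless of the order of $g_0$. If $g_0$ has finite order this map automatically has finite image, and your contradiction applies directly; so your argument actually shows as a byproduct that $g_0$ must have infinite order. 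With that rephrasing the reduction is airtight and the rest of the argument — the recursion $x_{n+2}=cx_{n+1}x_n^{-1}x_{n+1}$, the telescoping $x_{n+1}=c^nax_n$, the bi-determinism of the recursion forcing full periodicity, and the two uses of torsion-freeness — is all correct as written.
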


\begin{lemma}
    \label{lem:Z}
    Let $\phi :\mathbb{Z}\to H$ be a unital quadratic map.
    \begin{enumerate}
        \item $\phi(\mathbb{Z})\subseteq\langle \phi(1),\phi(2)\rangle$.
        \item $[\phi(1),\phi(2)]$ commutes with $\phi(2)^{-1}\phi(1)^2$.
    \end{enumerate}
\end{lemma}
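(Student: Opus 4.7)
My plan is to handle (1) by a direct recursion via the quadratic identity of Lemma \ref{lem:dg1g2g3}, and to obtain (2) as an immediate consequence of the universal construction $\Pol2\mathbb{Z}$ from Lemma \ref{lem:pol2}.

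For (1), unpacking Lemma \ref{lem:dg1g2g3} for a unital $\phi:\mathbb{Z}\to H$ (written additively) gives the identity
\[
\phi(a+b+c)=\phi(b+c)\phi(b)^{-1}\phi(c)^{-1}\phi(a+c)\phi(a)^{-1}\phi(a+b),
\]
from which two specializations do all the work. The choice $(a,b,c)=(n,1,1)$ produces the forward recurrence $\phi(n+2)=\phi(2)\phi(1)^{-2}\phi(n+1)\phi(n)^{-1}\phi(n+1)$, which by induction gives $\phi(n)\in\langle\phi(1),\phi(2)\rangle$ for every $n\ge 0$, starting from the trivial base values $\phi(0)=1$, $\phi(1)$, $\phi(2)$. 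For negative integers, the choice $(a,b,c)=(1,1,-1)$ first yields the closed-form expression $\phi(-1)=\phi(1)^{-1}\phi(2)\phi(1)^{-2}$, and the symmetric choice $(a,b,c)=(n,-1,-1)$ gives a backward recurrence
\[
\phi(n-2)=\phi(-2)\phi(-1)^{-2}\phi(n-1)\phi(n)^{-1}\phi(n-1)
\]
that, applied at $n=1$, produces $\phi(-2)$ explicitly in $\langle\phi(1),\phi(2)\rangle$ and then propagates to every $m\le-3$ by downward induction.

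For (2), I invoke the universal property of Lemma \ref{lem:pol2}. The identity map $\mathrm{id}_\mathbb{Z}$ is a unital quadratic map, so uniqueness in that lemma forces the canonical epimorphism $\pi:\Pol2\mathbb{Z}\to\mathbb{Z}$ to satisfy $\pi(\tau(n))=n$ for every $n$. Setting $A=\tau(1)$ and $B=\tau(2)$ in $\Pol2\mathbb{Z}$, both $\pi([A,B])$ and $\pi(B^{-1}A^2)$ vanish in $\mathbb{Z}$ — the first because $\mathbb{Z}$ is abelian, the second because $-2+2\cdot 1=0$ — so both $[A,B]$ and $B^{-1}A^2$ lie in the abelian kernel $N(\mathbb{Z})$ and therefore commute already in $\Pol2\mathbb{Z}$. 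Transporting this commutation along the homomorphism $\kappa:\Pol2\mathbb{Z}\to H$ with $\kappa\tau=\phi$ delivers that $[\phi(1),\phi(2)]=\kappa([A,B])$ commutes with $\phi(2)^{-1}\phi(1)^2=\kappa(B^{-1}A^2)$, which is exactly (2).

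The only mild obstacle is bookkeeping in part (1): anchoring the downward induction requires computing $\phi(-1)$ and $\phi(-2)$ by hand from the correct triples so that the recurrence moves in the right direction. Part (2), by contrast, is essentially a one-line consequence of the metabelian structure of $\Pol2\mathbb{Z}$ and needs no calculation in $H$ at all; a purely computational alternative would be to substitute the explicit expressions for $\phi(-1)$ and $\phi(3)$ into the quadratic identity with $(a,b,c)=(1,-1,2)$ and rearrange, but the $\Pol2\mathbb{Z}$ route is markedly cleaner and fits the paper's narrative.
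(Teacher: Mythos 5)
Your proof is correct, and part~(2) takes a genuinely different and cleaner route than the paper's.

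For part~(1) you do essentially what the paper does: repeatedly specialize the identity of Lemma~\ref{lem:dg1g2g3} to triples of small integers to produce an order-two recurrence. The paper uses $(\mathfrak{d}_{1,1,n-1}\phi)(1)=1$ to get the recursion $\phi(n+1)\phi(2)^{-1}\phi(1)\phi(n)^{-1}\phi(n-1)\phi(1)\phi(n)^{-1}=1$, which can be run in both directions; you use $(a,b,c)=(n,1,1)$ for the forward recurrence and $(n,-1,-1)$ for the backward one, with $\phi(-1)$ and $\phi(-2)$ computed by hand as anchors. Both arrangements are fine, and yours is somewhat more explicitly bookkept than the paper's two-sided reading of a single recurrence.

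For part~(2) the paper proceeds by brute force: it computes $\phi(3)=ba^{-2}ba^{-1}b$ and $\phi(-1)$ in two different ways from the triples $(1,1,1)$, $(1,1,-1)$ and $(1,-1,2)$, equates the two expressions for $\phi(-1)$, and simplifies to $[b,a^{-1}]^{a^{-1}}=[b^{-1},a]$, from which the assertion follows by rearranging. Your argument instead lifts the computation into $\Pol2\mathbb{Z}$: since $\pi\tau=\mathrm{id}_\mathbb{Z}$ (by uniqueness in Lemma~\ref{lem:pol2} applied to the unital quadratic map $\mathrm{id}_\mathbb{Z}$), both $[\tau(1),\tau(2)]$ and $\tau(2)^{-1}\tau(1)^{2}$ lie in $N(\mathbb{Z})=\ker\pi$, which is abelian; hence they commute in $\Pol2\mathbb{Z}$ and therefore their images under $\kappa$ commute in $H$. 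This is shorter, avoids any algebraic manipulation inside $H$, and makes the origin of the relation transparent: it is nothing but the metabelian structure of $\Pol2\mathbb{Z}$ in degree $2$. The one thing worth making explicit in your write-up is the identification of $\pi$ as the map with $\pi(\tau(n))=n$; this is how $\pi$ is defined in \cite{JT24}, but the paper's statement of Lemma~\ref{lem:pol2} records only that $\pi$ is an epimorphism with abelian kernel, so it is best to say a word about why $\pi=\kappa_{\mathrm{id}}$ (or to work directly with $\kappa_{\mathrm{id}}$, noting its kernel equals $N(\mathbb{Z})$).
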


\begin{proof}
    (1) follows directly from Lemma \ref{lem:pol2} which gives, applied to 
    $(\mathfrak{d}_{1,1,n-1}\phi)(1)=1$, the recursive equation
    $$\phi(n+1)\phi(2)^{-1}\phi(1)\phi(n)^{-1}\phi(n-1)\phi(1)\phi(n)^{-1}=1$$ for all $n\in\mathbb{Z}$.

    Denote $a=\phi(1)$, $b=\phi(2)$. The above equality gives
    \begin{equation}
        \label{eq:Z1}
        \phi(3)=ba^{-2}ba^{-1}b.
    \end{equation}
    Lemma \ref{lem:pol2}, applied to $(\mathfrak{d}_{1,1,-1}\phi)(1)=1$, gives
    \begin{equation}
        \label{eq:Z2}
        \phi(-1)=a^{-1}ba^{-2},
    \end{equation}
    whereas $(\mathfrak{d}_{1,-1,2}\phi)(1)=1$ implies
    \begin{equation}
        \label{eq:Z3}
        \phi(-1)=b^{-1}\phi(3)a^{-1}b^{-1}a.
    \end{equation}
    Comparing \eqref{eq:Z2} and \eqref{eq:Z3} and using \eqref{eq:Z1}, we get
    $ab^{-1}aba^{-2}=ba^{-1}b^{-1}a$, which is the same as $[b,a^{-1}]^{a^{-1}}=[b^{-1},a]$. From here it is easy to derive (2).
\end{proof}

\begin{proof}[Proof of Theorem \ref{thm:distancenearly}]
    Let $H$ be a torsion-free hyperbolic group, and let $\phi_1,\phi_2:G\to H$ be nearly normal quasi-quadratic maps with $d(\phi_1,\phi_2)$ finite. If $\phi_1$ is bounded, then so is $\phi_2$. So assume that both $\phi_1$, $\phi_2$ are unbounded. if $\phi_1$ maps into a cyclic subgroup $C$ of $H$, then the boundary argument of \cite[Proof of Theorem 4.4]{FK16} can be applied to show that $\phi_2(G)\subseteq C$. By Proposition \ref{prop:normalhyperb}, we are thus left with the case when both $\phi_1$ and $\phi_2$ are unbounded unital quadratic maps that do not map $G$ into a cyclic subgroup of $H$. We claim that this implies $\phi_1=\phi_2$.

    Suppose there exists $x\in G$ such that $\phi_1(x)\neq \phi_2(x)$. From here on we restrict $\phi_1$ and $\phi_2$ to the cyclic subgroup $\langle x\rangle$ of $G$. If $\phi_1$ is constant on $\langle x\rangle$, then $\phi_1\equiv 1$. As $\phi_2(x)\neq \phi_1(x)$, it follows from Lemma \ref{lem:leibman} that $\phi_2$ is unbounded, but then $d(\phi_1,\phi_2)=\infty$. This contradiction, together with Lemma \ref{lem:leibman}, shows that both $\phi_1$ and $\phi_2$, when restricted to $\langle x\rangle$, are unbounded. In particular, $\langle x\rangle$ is infinite cyclic, and thus we are dealing with the unital quadratic maps $\phi_1:\mathbb{Z}\to \langle \phi_1(1),\phi_1(2)\rangle$ and $\phi_2:\mathbb{Z}\to \langle \phi_2(1),\phi_2(2)\rangle$ with $\phi_1(1)\neq \phi_2(1)$. Denote $a=\phi_1(1)$, $b=\phi_1(2)$. By assumption $[a,b]\neq 1$. By Lemma \ref{lem:Z} we have that $b^{-1}a^2\in C_H([a,b])$, therefore $b^{-1}a^2$ and $[a,b]$ are both powers of the same element $z\in C_H([a,b])$, that is $b^{-1}a^2=z^k$ and $[a,b]=z^\ell$ for some $k,\ell\in\mathbb{Z}$. Then
    $$z^\ell=[a,a^2z^{-k}]=[a,z^{-k}],$$
    which gives
    \begin{equation}
        \label{eq:H1}
        z^{\ell +k}=(z^{k})^a\in C_H([a,b])\cap C_H([a,b])^a.
    \end{equation}
    If $a\in C_H([a,b])$, then $b\in C_H([a,b])$, hence $a$ and $b$ commute, a contradiction. If $a\notin C_H([a,b])$, then \eqref{eq:H1} implies $k=-\ell$, hence $b=a^2$, and again we conclude that $\langle a,b\rangle$ needs to be cyclic. This final contradiction shows that $\phi_1=\phi_2$.
\end{proof}




\end{document}